\newcommand{\ben}{\begin{eqnarray*}}
\newcommand{\enn}{\end{eqnarray*}}
\newtheorem{thm}{Theorem}[section]
\newtheorem{lem}[thm]{Lemma}
\newtheorem{defn}[thm]{Definition}
\newtheorem{prob}[thm]{Problem}
\newtheorem{prop}[thm]{Proposition}
\newtheorem{rem}[thm]{Remark}
\newtheorem{ass}[thm]{Assumption}
\definecolor{dhcol}{rgb}{0,0.5,0}
\definecolor{cecol}{rgb}{0,0,0.5}
\newcommand{\mres}{%
	\,\raisebox{-.127ex}{\reflectbox{\rotatebox[origin=br]{-90}{$\lnot$}}}\,%
}
\title{Maximal Codimension Collisions and Invariant Measures for Hard Spheres on a Line}
\author{Mark Wilkinson\footnote{Department of Physics and Mathematics, New Hall Block, Nottingham Trent University, Nottingham, United Kingdom (\href{mark.wilkinson@ntu.ac.uk}{mark.wilkinson@ntu.ac.uk}).}}
\date{}
\begin{document}
\maketitle
\begin{abstract}
\noindent For any $N\geq 3$, we study invariant measures of the dynamics of $N$ hard spheres whose centres are constrained to lie on a line. In particular, we study the invariant submanifold $\mathcal{M}$ of the tangent bundle of the hard sphere billiard table comprising initial data that lead to the simultaneous collision of all $N$ hard spheres. Firstly, we obtain a characterisation of those continuously-differentiable $N$-body scattering maps which generate a billiard dynamics on $\mathcal{M}$ admitting a canonical weighted Hausdorff measure on $\mathcal{M}$ (that we term the \emph{Liouville measure on $\mathcal{M}$}) as an invariant measure. We do this by deriving a second boundary-value problem for a fully nonlinear PDE that all such scattering maps satisfy by necessity. Secondly, by solving a family of functional equations, we find sufficient conditions on measures which are absolutely continuous with respect to the Hausdorff measure in order that they be invariant for billiard flows that conserve momentum and energy. Finally, we show that the unique momentum- and energy-conserving \emph{linear} $N$-body scattering map yields a billiard dynamics which admits the Liouville measure on $\mathcal{M}$ as an invariant measure.
\end{abstract}
\setcounter{tocdepth}{2}
\tableofcontents
\section{Introduction}\label{intropaper}
In the theory of billiard dynamics on billiard tables which possess \emph{corners}, it is often the case that trajectories are only constructed for those initial data that do not lead to the corners themselves. Indeed, to quote Smillie (\cite{smillie}, pp.360-361), if ``the trajectory hits a corner then there may be no good physical principal [sic] which selects one particular continuation. In this case the continuation of the trajectory is undefined.'' We also refer the reader to the works (\cite{pnueli2021structure}, \cite{de2014expansion}, \cite{kenyon2000billiards}, \cite{gutkin2012billiard}, \cite{chernov2006chaotic}) in which corner dynamics is considered. As a result, the associated billiard flow operators are undefined on a topologically- or measure theoretically-negligible subset of the tangent bundle of the table (or a sphere bundle over the table, if one works in a phase space with unit velocities). If one is to try to extend the domain of definition of these billiard flow operators to the whole tangent bundle of the table, thereby extending the definition of a trajectory past those times it hits a corner, one has to deal with the generic \emph{non-uniqueness} of trajectory extension past this time of collision. In order to decide what constitutes a `good' or `natural' extension of the dynamics, we shall consider \emph{analytical} properties of trajectories \emph{in-the-large} on phase space, rather than any physical principle which is otherwise lacking.

One well-studied billiard table with corners, of interest to those both in the pure and applied mathematics communities (see, for example, Bodineau, Gallagher, Saint-Raymond and Simonella \cite{laurespheres}, or Viazovska \cite{viazovska2017sphere} in the case the spheres are 8-dimensional), is the set of all admissible centres of mass for a collection of $N$ hard spheres evolving in space. Indeed, we denote by $\mathcal{B}\subset\mathbb{R}^{3N}$ the \emph{hard sphere billiard table} given by
\begin{equation}\label{allthetable}
\mathcal{B}:=\left\{
X=(x_{1}, ..., x_{N})\in\mathbb{R}^{3N}\,:\,|x_{i}-x_{j}|\geq \varepsilon\hspace{2mm}\text{for}\hspace{2mm}i\neq j
\right\},
\end{equation}
with $x_{i}=(x_{i, 1}, x_{i, 2}, x_{i, 3})\in\mathbb{R}^{3}$ modelling the centre of mass of a sphere labelled by $i\in\{1, ..., N\}$, and where $\varepsilon>0$ denotes the radius of each of the congruent spheres. Corners in the boundary $\partial\mathcal{B}$ of the billiard table correspond to configurations of the centres of mass which characterise simultaneous collision of $M$ of these spheres, where $3\leq M\leq N$. In the particular case of momentum- and energy-conserving hard sphere dynamics on $\mathbb{R}^{3}$ or subsets thereof, with which the present article is largely concerned, the matter of constructing global-in-time billiard trajectories which start from initial data in these high codimension components of the tangent bundle $T\mathcal{B}$ of the billiard table amounts in the first instance to a problem in \emph{algebraic geometry}. Indeed, one needs to construct a \textbf{scattering map} that has range in a non-trivial semi-algebraic subvariety of $\mathbb{R}^{3N}$, which leads to the aforementioned absence of uniqueness in trajectory extension. 

In this article, we study the connection between the analytical properties of such scattering maps and the analytical properties of the billiard flows on phase space that they generate. In particular, in the reduced setting of linear hard sphere dynamics on the tangent bundle $T\mathcal{B}$ of the hard sphere table $\mathcal{B}$, we use the concept of \textbf{invariant measure} (and the PDE involving their densities) to formulate a condition as to what constitutes a `good' extension of billiard trajectory past corners of the table $\mathcal{B}$. By establishing a natural generalisation of \emph{Liouville measure} on the subvariety of $T\mathcal{B}$ on which we work, we prove in the case of momentum- and energy-conserving billiard dynamics that there is a \emph{unique} linear scattering map generating a billiard flow on $T\mathcal{B}$ that admits this Liouville measure as an invariant measure.

As the construction of scattering maps involves rudimentary (semi-) algebraic geometry, we now set out the problem of their construction.
\subsection{A Problem in Algebraic Geometry}
To begin, we set up some notation that will be helpful in defining the algebraic varieties with which we work in the sequel. For given $Z=(X, V)\in T\mathcal{B}$, we recall from the above definition \eqref{allthetable} of the billiard table that $X\in\mathcal{B}$ denotes the vector of centre-of-mass data given component-wise by
\begin{equation*}
X_{i}:=x_{\lceil i/3 \rceil, [i-1]_{3}+1}     
\end{equation*}
for $i\in\{1, ..., 3N\}$ with $[i]_{3}:=i\,(\text{mod}\,3)$, while $V\in T_{X}\mathcal{B}\subset\mathbb{R}^{3N}$ analogously denotes the velocity vector given component-wise by
\begin{equation*}
V_{i}:=v_{\lceil i/3 \rceil, [i-1]_{3}+1}    
\end{equation*}
where $v_{i}=(v_{i, 1}, v_{i, 2}, v_{i, 3})\in\mathbb{R}^{3}$ denotes the velocity of the centre of mass of the hard sphere labelled by $i$. In turn, for $Z\in T\partial\mathcal{B}$, let $\mathcal{Y}_{Z}\subset\mathbb{R}^{3N}$ denote the algebraic variety determined by the following 7 algebraic equations in the variable $Y=(Y_{1}, ..., Y_{3N})\in\mathbb{R}^{3N}$: 
\begin{equation}\label{linmomeqs}
\left\{
\begin{array}{l}
\displaystyle \sum_{i=1}^{N}Y_{3i-2}=\sum_{i=1}^{N}V_{3i-2}, \vspace{2mm}\\
\displaystyle \sum_{i=1}^{N}Y_{3i-1}=\sum_{i=1}^{N}V_{3i-1}, \vspace{2mm}\\
\displaystyle \sum_{i=1}^{N}Y_{3i}=\sum_{i=1}^{N}V_{3i}, \vspace{2mm}
\end{array}
\right.
\end{equation}
\begin{equation}\label{angmomeqs}
\left\{
\begin{array}{l}
\displaystyle \sum_{i=1}^{N}X_{3i-1}Y_{3i}-X_{3i}Y_{3i-1}=\sum_{i=1}^{N}X_{3i-1}V_{3i}-X_{3i}V_{3i-1}, \vspace{2mm}\\
\displaystyle \sum_{i=1}^{N}X_{3i}Y_{3i-2}-X_{3i-2}Y_{3i}=\sum_{i=1}^{N}X_{3i}V_{3i-2}-X_{3i-2}V_{3i}, \vspace{2mm}\\
\displaystyle \sum_{i=1}^{N}X_{3i-2}Y_{3i-1}-X_{3i-1}Y_{3i-2}=\sum_{i=1}^{N}X_{3i-2}V_{3i-1}-X_{3i-1}V_{3i-2},
\end{array}
\right.
\end{equation}
and
\begin{equation}\label{keeqs}
\sum_{i=1}^{N}Y_{i}^{2}=\sum_{i=1}^{N}V_{i}^{2}. 
\end{equation}
The first three equations \eqref{linmomeqs} correspond to the conservation of linear momentum, the second three \eqref{angmomeqs} to the conservation of angular momentum (measured from the origin), and the final equation \eqref{keeqs} to the conservation of kinetic energy. The structure of the variety $\mathcal{Y}_{Z}$ determined by these algebraic equations can be understood, for instance, by computation of a Gr\"{o}bner basis of the above set of polynomials through Buchberger's algorithm: see Hassett (\cite{hassett2007introduction}, Chapter 2) for a clear description thereof. 

Additionally, if one stipulates that billiard trajectories $t\mapsto X(t)$ are both left- and right-differentiable on $\mathbb{R}$, the above system is supplemented with the system of $X$-dependent \emph{semi-algebraic} conditions given by
\begin{equation}\label{semialgebraicconditions}
\sum_{k=1}^{3}\left(X_{3(i-1)+k}-X_{3(j-1)+k}\right)(Y_{3(i-1)+k}-Y_{3(j-1)+k})\leq 0
\end{equation}
for $(i, j)\in\Gamma_{X}$, where
\begin{equation*}
\Gamma_{X}:=\left\{(i, j)\in\{1, ..., N\}^{2}\setminus \Delta_{N}\,:\,\sum_{k=1}^{3}(X_{3(i-1)+k}-X_{3(j-1)+k})^{2}=\varepsilon^{2}\right\}.    
\end{equation*}
These semi-algebraic conditions correspond to the set of all admissible \emph{pre-collisional} velocities, given the spatial configuration of the spheres $X\in\partial\mathcal{B}$. For given $X\in\partial\mathcal{B}$, we write $\mathcal{V}_{X}^{-}\subset\mathbb{R}^{3N}$ to denote the set of all $Y\in\mathbb{R}^{3N}$ satisfying condition \eqref{semialgebraicconditions} for all $(i, j)\in\Gamma_{X}$. The analogous set of \emph{post-collisional} velocities is denoted by $\mathcal{V}_{X}^{+}:=-\mathcal{V}_{X}^{-}$. 

Given spatial data $X\in\partial\mathcal{B}$ in a corner of the table, as well as a velocity vector $V\in\mathcal{V}_{X}^{-}$ which is pre-collisional with respect to $X$, when $N\gg 1$ it is not trivial to determine the structure of the algebraic variety $\mathcal{Y}_{Z}\cap\mathcal{V}_{X}^{+}\subset\mathbb{R}^{3N}$ for $Z=(X, V)\in T\partial\mathcal{B}$. We invite the reader to consult Coste \cite{10.1007/3-540-51683-2_21} or Basu, Pollack and Roy (\cite{basu2006algorithms}, Chapter 14) for techniques which allow one to determine the structure of the variety determined by the above semi-algebraic system \eqref{linmomeqs}--\eqref{semialgebraicconditions}.
\subsection{Scattering Maps and the Flows they Generate}
As alluded to above, it is not obvious\footnote{Nevertheless, the reader can check in the relatively-simple case $N=3$ that the set $\mathcal{V}_{X}^{-}$ is a non-$\mathscr{L}_{9}$-null subset of $\mathbb{R}^{9}$ for \emph{all} $X\in \partial\mathcal{B}$, with $\mathscr{L}_{9}$ denoting the Lebesgue measure on $\mathbb{R}^{9}$} \emph{a priori} that $\mathcal{V}_{X}^{-}\neq \varnothing$ for all $X\in\partial\mathcal{B}$ and all $N\geq 3$. Moving forward, we define the set $\mathcal{G}\subset\partial\mathcal{B}$ to be
\begin{equation*}
\mathcal{G}:=\left\{X\in\partial\mathcal{B}\,:\,\mathcal{V}_{X}^{-}\neq \varnothing\right\}.    
\end{equation*}
With the above notation in place, one can consider the problem of the construction of maps $\Sigma$ with
\begin{equation*}
\Sigma:\bigsqcup_{X\in\mathcal{G}}\mathcal{V}_{X}^{-}\rightarrow\bigsqcup_{X\in\mathcal{G}}\mathcal{V}_{X}^{+}    
\end{equation*}
subject to the range condition
\begin{equation*}
(\Pi_{2}\circ\Sigma)(Z)\in\mathcal{Y}_{Z},    
\end{equation*}
where $\Pi_{2}:T\mathcal{B}\rightarrow\mathbb{R}^{3N}$ denotes the canonical velocity projection map: see the notation Section \ref{notation} below. Thus, the map $\Sigma$ maps pre-collisional velocity data to post-collisional velocity data in such a way that linear momentum, angular momentum, and kinetic energy are conserved. We call any such map $\Sigma$ a momentum- and energy-conserving \textbf{scattering map}. One may use any such scattering map $\Sigma$ to construct a billiard flow $\{T^{t}_{\Sigma}\}_{t\in\mathbb{R}}$ on a subset of $T\mathcal{B}$. The reader may consult Ballard \cite{ballard2000dynamics} for further details on the construction of dynamics for hard sphere systems. 

There are generically uncountably-many momentum- and energy-conserving scattering maps $\Sigma$. As such, there are uncountably-many associated billiard flows $\{T^{t}_{\Sigma}\}_{t\in\mathbb{R}}$ on $T\mathcal{B}$. As binary collisions of hard spheres which conserve momentum and energy are uniquely determined -- or, in the language of this Section, it holds that $\mathcal{Y}_{Z}\cap\mathcal{V}_{X}^{+}$ is a singleton for any $Z\in T\mathcal{B}$ such that $\Gamma_{X}$ is itself a singleton -- the difference between any two flow operators $T^{t}_{\Sigma_{1}}$ and $T^{t}_{\Sigma_{2}}$ generated by momentum- and energy-conserving scattering maps $\Sigma_{1}$ and $\Sigma_{2}$ is seen only on topologically- or measure-theoretically negligible subsets of $T\mathcal{B}$. Nevertheless, if one is interested in the \emph{global} structure of billiard dynamics on $T\mathcal{B}$, it is important to derive natural analytical criteria on scattering maps $\Sigma$ which result in `good' properties of the associated flow operators $T^{t}_{\Sigma}$. The point of view we take in this article arises from the study of \emph{invariant measures} on $T\mathcal{B}$ other than the well-known \emph{Liouville measure} $\Lambda:=\mathscr{L}_{6N}\mres T\mathcal{B}$: see Chernov and Markarian (\cite{chernov2006chaotic}, Chapter 2), for instance. We focus on the following problem, which we choose to state simply and without complete rigour for the moment:
\begin{prob}\label{looseprob}
Let $N\geq 3$. Suppose $\mathcal{M}\subset T\mathcal{B}$ is an invariant manifold for any billiard flow on $T\mathcal{B}$. For a given measure $\mu$ on $\mathcal{M}$ which is absolutely continuous with respect to the Hausdorff measure on $\mathcal{M}$, find all $\Sigma$ such that
\begin{equation*}
T^{t}_{\Sigma}\#\mu=\mu    
\end{equation*}
for all $t\in\mathbb{R}$.
\end{prob}
In order to make this problem more tractable to study in the present article, we reduce our study of free motion of hard spheres on $\mathbb{R}^{3}$ to the case when the centres of mass of the spheres are constrained to lie on a line. The associated Problem \ref{looseprob} is still not trivial in this case. As we shall show in the sequel, when $\Sigma$ admits some smoothness properties, solving this Problem results in the identification of a second boundary-value problem for a fully-nonlinear PDE characterising all desired scattering maps $\Sigma$: see Trudinger and Wang \cite{trudinger2009second} for the definition of such boundary-value problems. This simplification on the structure of the billiard table results in the well-studied \textbf{hard rod} or so-called \textbf{one-dimensional hard sphere} model.
\subsection{The Scattering Varieties in the `One-dimensional' Case}
The structure of the algebraic system introduced above becomes more straightforward when we assume that the motion of the $N$ spheres is constrained to lie on a line in $\mathbb{R}^{3}$. More precisely, in the particular case that the spatial datum $X\in\partial\mathcal{B}$ is of the form
\begin{equation}\label{simpleboundarypoint}
X=(d_{1}a, ..., d_{N}a)\in\partial\mathcal{B}\subset\mathbb{R}^{3N}    
\end{equation}
for some $a\in\mathbb{S}^{2}$ and $d_{i}\in\mathbb{R}$ satisfying $d_{i+1}-d_{i}=\varepsilon$ with $i\in\{1, ..., N-1\}$, and the velocity vector is taken to be
\begin{equation*}
V=(y_{1}a, ..., y_{N}a)\in\mathbb{R}^{3N}    
\end{equation*}
for some $y_{i}\in\mathbb{R}$, the system of algebraic equations \eqref{linmomeqs}--\eqref{keeqs} reduces to the considerably simpler system in the unknown $Y=(y_{1}, ..., y_{N})\in\mathbb{R}^{N}$ given by 
\begin{equation}\label{easyalgebra}
\left\{
\begin{array}{l}
\displaystyle \sum_{i=1}^{N}y_{i}=\sum_{i=1}^{N}v_{i},\vspace{2mm}\\
\displaystyle \sum_{i=1}^{N}y_{i}^{2}=\sum_{i=1}^{N}v_{i}^{2},
\end{array}
\right.
\end{equation}
while the set of semi-algebraic conditions \eqref{semialgebraicconditions} reduces to
\begin{equation}\label{easyineq}
y_{i+1}-y_{i}\geq 0   
\end{equation}
for $i\in\{1, ..., N\}$. Notably, this system does not depend on the choice of $X\in\partial\mathcal{B}$ of the form \eqref{simpleboundarypoint}. We denote the set of solutions $Y$ of \eqref{easyalgebra} by $\mathcal{Y}_{V}$, and the non-empty set of points $Y$ satisfying the inequalities \eqref{easyineq} by $\mathcal{V}^{-}$. If $\mathcal{V}^{+}:=-\mathcal{V}^{-}$, then it holds that the collection of all (what we term) \textbf{$N$-body scattering maps} $\sigma:\mathcal{V}^{-}\rightarrow\mathcal{V}^{+}$ satisfying the range condition 
\begin{equation*}
\sigma(V)\in\mathcal{Y}_{V}    
\end{equation*}
for all $V\in\mathcal{V}^{-}$ is uncountable, whence the number of ways to extend a billiard trajectory which meets the corner point $X\in\partial\mathcal{B}$ is uncountable. In this article, we identify analytical conditions on the scattering map $\sigma$ under which the means to extend such a billiard trajectory is unique: see the statements of our main Theorems \ref{firstmainresult} and \ref{uniquelinear} below.
\subsection{Reduction to the Fundamental Billiard Table $\mathcal{Q}$}
In this Section, we derive what we term the \emph{fundamental billiard table} $\mathcal{Q}\subset\mathbb{R}^{N}$ that models the `one-dimensional' motion of $N$ hard spheres that will be of central focus in this article. We begin with the following definition.
\begin{defn}[Linear Configurations of Hard Spheres in $\mathbb{R}^{3}$]
For any $N\geq 3$, we define the set of all linear configurations of $N$ hard spheres $\mathcal{L}\subset\mathcal{B}$ by
\begin{equation*}
\mathcal{L}:=\left\{
\Xi=(\xi_{1}, ..., \xi_{N})\in\mathcal{B}\,:\,\{\xi_{i}\}_{i=1}^{N}\hspace{2mm}\text{are collinear in}\hspace{1mm}\mathbb{R}^{3}
\right\}.
\end{equation*}
\end{defn}
Let us now write $\mathcal{L}$ in such a way it reveals the role of what we term the \emph{fundamental table} $\mathcal{Q}$ in its definition. For any $N\geq 3$ and for any given vectors $a, b\in\mathbb{R}^{3}$, we define the set $\mathcal{L}(a, b)\subset\mathcal{B}$ by
\begin{equation*}
\mathcal{L}(a, b):=\left\{
\Xi\in\mathcal{B}\,:\,\Xi=M(a)s+M(b)\mathbf{1}\hspace{2mm}\text{for some}\hspace{1mm}s\in\mathbb{R}
\right\},
\end{equation*}
where $M:\mathbb{R}^{3}\rightarrow\mathbb{R}^{3N\times N}$ is defined pointwise by
\begin{equation*}
M(a):=\sum_{i=1}^{3N}\sum_{j=1}^{N}(a_{[i-1]_{3}+1}\delta_{\lceil i/3\rceil, j})e_{i}^{3N}\otimes e_{j}^{N}    
\end{equation*}
and $\mathbf{1}\in\mathbb{R}^{3N}$ denotes the vector all of whose entries are 1, namely $\mathbf{1}:=\sum_{i=1}^{3N}e_{i}^{3N}$. Simply put, the set $\mathcal{L}(a, b)$ models the set of all admissible centres of $N$ hard spheres which lie on a line in $\mathbb{R}^{3}$ that is collinear with $a$ and that passes through $b$. In turn, the set $\mathcal{L}\subset\mathcal{B}$ may be written as
\begin{equation*}
\mathcal{L}=\bigcup_{a\in\mathbb{R}^{3}\setminus\{0\}}\bigcup_{b\in\mathbb{R}^{3}}\mathcal{L}(a, b).    
\end{equation*}
We also introduce the `one-dimensional' billiard table $\mathcal{P}\subset\mathbb{R}^{N}$ to be
\begin{equation*}
\mathcal{P}:=\left\{
X=(x_{1}, ..., x_{N})\in\mathbb{R}^{N}\,:\,|x_{i}-x_{j}|\geq \varepsilon\hspace{2mm}\text{for}\hspace{1mm}i\neq j
\right\},
\end{equation*}
noting that $x_{i}$ now represents an element of $\mathbb{R}$, rather than $\mathbb{R}^{3}$. In this notation, it holds that
\begin{equation*}
\mathcal{L}(a, b)=M(a)\mathcal{P}+M(b)\mathbf{1}.    
\end{equation*}
We now approach the definition of one of the basic objects of study in this article.
\begin{defn}[The Fundamental Billiard Table $\mathcal{Q}$]
For any $N\geq 3$, we define the \textbf{fundamental billiard table} $\mathcal{Q}\subset\mathbb{R}^{N}$ to be
\begin{equation*}
\mathcal{Q}:=\left\{
X\in\mathbb{R}^{N}\,:\,x_{i}\leq x_{i+1}\hspace{2mm}\text{for}\hspace{1mm}i\in\{1, ..., N-1\}
\right\}.
\end{equation*}
\end{defn}
Using the notation established above, we may write
\begin{equation*}
\mathcal{P}=\bigcup_{\pi\in\mathfrak{s}(N)}(\pi\circ S_{\varepsilon})\mathcal{Q},
\end{equation*}
with $\mathfrak{s}(N)$ denoting the symmetric group on $N$ letters, and the shift map $S_{\varepsilon}:\mathcal{Q}\rightarrow\mathcal{P}$ defined pointwise by
\begin{equation*}
S_{\varepsilon}(X):=X+\varepsilon\sum_{i=1}^{N}ie^{N}_{i},    
\end{equation*}
where $\varepsilon>0$ denotes the radii of the spheres. As a result, we finally have that
\begin{equation*}
\mathcal{L}=\bigcup_{a\in\mathbb{R}^{3}}\bigcup_{b\in\mathbb{R}^{3}}M(a)\left(\bigcup_{\pi\in\mathfrak{s}(N)}(\pi\circ S_{\varepsilon})\mathcal{Q}\right)+M(b)\mathbf{1}    
\end{equation*}
\begin{figure}
\includegraphics[scale=0.6]{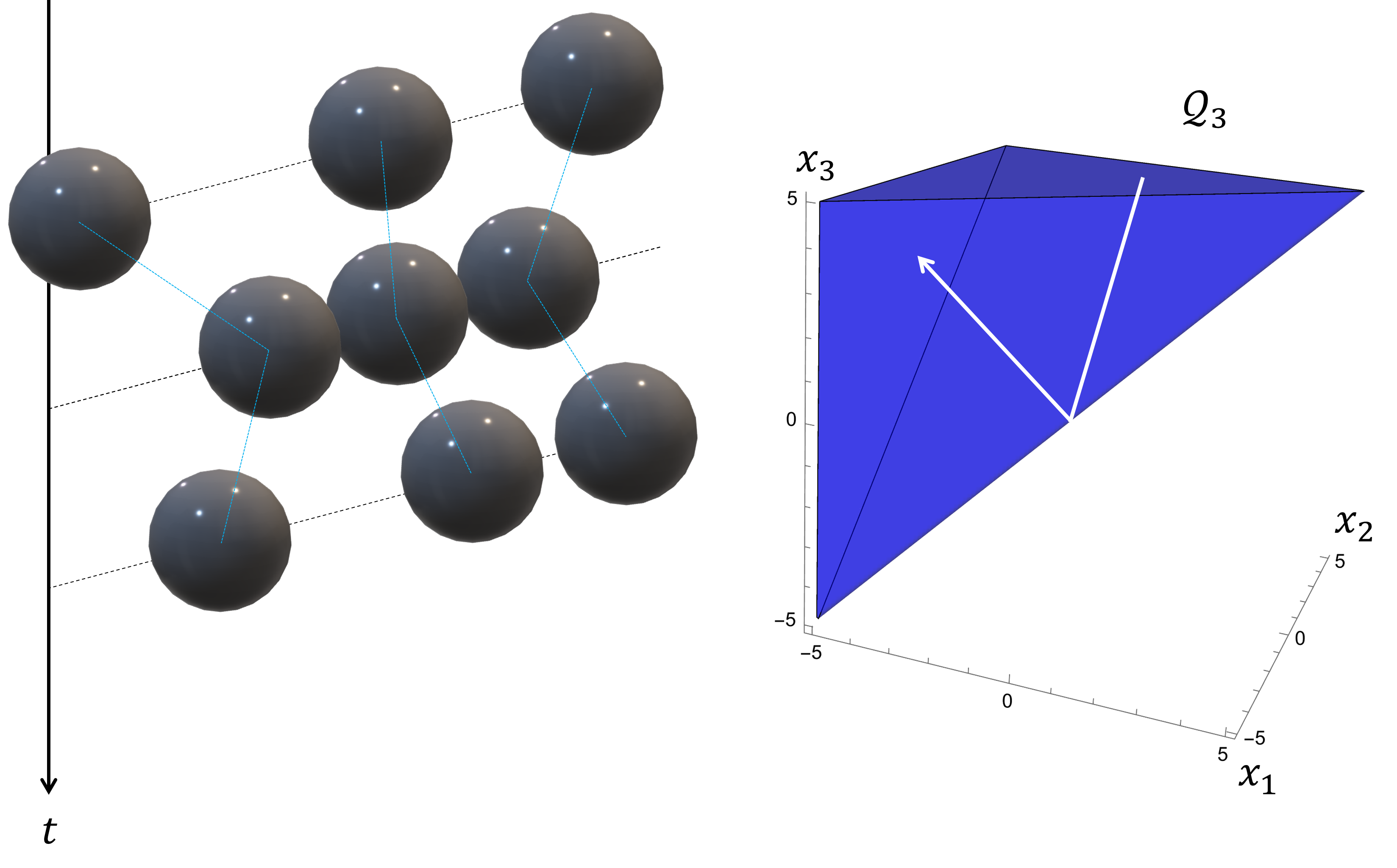}
\caption{The evolution of $N=3$ hard spheres leading to a collision (\emph{left}), with the corresponding billiard trajectory on the fundamental table $\mathcal{Q}$ (\emph{right}, with the fundamental table denoted explicitly by $\mathcal{Q}_{3}$ in this instance that $N=3$).}
\end{figure}The reader can check, for instance, that when $N=3$, the fundamental billiard table $\mathcal{Q}$ is an unbounded wedge generated by two planes with interior angle $\frac{\pi}{3}$ in $\mathbb{R}^{3}$, while in the case $N=4$ it admits the structure of a bundle over a line, whose fibres are unbounded triangular prisms. We restrict our attention to the fundamental billiard table $\mathcal{Q}$ in the sequel, as there are certain simplifications in our analysis which arise from factoring out the radii of the spheres which we model. Adopting the terminology of Joyce \cite{joyce2009manifolds}, we note the following which we state without proof.
\begin{prop}
For any $N\geq 3$, the set $\mathcal{Q}$ admits the structure of a manifold with corners.    
\end{prop}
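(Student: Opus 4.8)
The plan is to exhibit a single global chart that identifies $\mathcal{Q}$ with the standard local model for a manifold with corners, namely $[0,\infty)^{N-1}\times\R$. Since $\mathcal{Q}$ carries the subspace topology inherited from $\R^{N}$, it is automatically Hausdorff and second countable, so the only substantive task is to produce a suitable homeomorphism onto (an open subset of) the model space. To this end I would introduce the linear difference coordinates $\Phi:\R^{N}\to\R^{N}$ defined by
\[
\Phi(X):=\bigl(x_{2}-x_{1},\,x_{3}-x_{2},\,\dots,\,x_{N}-x_{N-1},\,x_{1}\bigr),
\]
whose effect is to record the $N-1$ consecutive gaps together with a single basepoint; one could equally take $\tfrac{1}{N}\sum_{i}x_{i}$, the centre of mass, as the final coordinate, which renders the $\R$-factor transparently the translation-invariant direction.

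First I would verify that $\Phi$ is a linear isomorphism of $\R^{N}$: writing $u=\Phi(X)$, its inverse is given explicitly by $x_{1}=u_{N}$ and $x_{k}=u_{N}+\sum_{i=1}^{k-1}u_{i}$ for $2\le k\le N$, so $\Phi$ is in particular a smooth diffeomorphism with smooth inverse. Next I would observe that the defining inequalities $x_{i}\le x_{i+1}$ of $\mathcal{Q}$ translate under $\Phi$ precisely into $u_{i}\ge 0$ for $i\in\{1,\dots,N-1\}$ with $u_{N}\in\R$ unconstrained, whence
\[
\Phi(\mathcal{Q})=[0,\infty)^{N-1}\times\R.
\]
Because $\Phi$ is a homeomorphism of all of $\R^{N}$, its restriction $\Phi|_{\mathcal{Q}}$ is a homeomorphism of $\mathcal{Q}$ onto this model corner space.

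The single chart $\Phi|_{\mathcal{Q}}$ then constitutes an atlas for $\mathcal{Q}$ in the sense of Joyce \cite{joyce2009manifolds}: there is only one transition map, the identity, which is trivially smooth, so the smooth-compatibility requirement is vacuous, while the $N-1$ boundary hyperplanes $\{u_{i}=0\}$ of the model recover exactly the collision faces $\{x_{i}=x_{i+1}\}$ of $\mathcal{Q}$. This displays $\mathcal{Q}$ as an $N$-dimensional manifold with corners, indeed one that is globally diffeomorphic to $[0,\infty)^{N-1}\times\R$.

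Since the construction reduces entirely to an explicit linear change of variables, I do not anticipate any genuine analytic obstacle; the only point requiring care is to match the conventions of the chosen definition, in particular to confirm that the whole model space $[0,\infty)^{N-1}\times\R$ is admissible as the codomain of a chart (it is, being an open subset of itself) and that a one-chart atlas qualifies. These are bookkeeping matters rather than difficulties, so the proof is essentially the verification that $\Phi$ has the three claimed properties: linearity/invertibility, smoothness of $\Phi$ and $\Phi^{-1}$, and the image identity $\Phi(\mathcal{Q})=[0,\infty)^{N-1}\times\R$.
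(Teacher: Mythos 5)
Your proof is correct and complete. Note that the paper states this proposition explicitly without proof, so there is no argument to compare against; your global linear chart $\Phi(X)=(x_{2}-x_{1},\dots,x_{N}-x_{N-1},x_{1})$, which carries $\mathcal{Q}$ diffeomorphically onto the standard model $[0,\infty)^{N-1}\times\mathbb{R}$ and identifies the $N-1$ collision faces $\{x_{i}=x_{i+1}\}$ with the coordinate hyperplanes $\{u_{i}=0\}$, is the natural and presumably intended argument, and the bookkeeping points you flag (a one-chart atlas suffices; the model space is open in itself) are indeed the only conventions to check against Joyce's definition.
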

As we shall introduce trajectories and associated flow operators for billiard dynamics in Section \ref{BAO} below, we also define the tangent bundle $T\mathcal{Q}$ to be
\begin{equation*}
T\mathcal{Q}:=\bigsqcup_{X\in\mathcal{Q}}T_{X}\mathcal{Q},    
\end{equation*}
where the fibres $T_{X}\mathcal{Q}\subseteq\mathbb{R}^{N}$ are 
\begin{equation*}
T_{X}\mathcal{Q}:=\left\{
\begin{array}{ll}
\mathbb{R}^{N} & \quad \text{if}\hspace{2mm}X\in\mathcal{Q}^{\circ}, \vspace{2mm}\\
\mathcal{V}_{X}^{-} & \quad\text{if}\hspace{2mm}X\in\partial\mathcal{Q},
\end{array}
\right.
\end{equation*}
where
\begin{equation*}
\mathcal{V}_{X}^{-}:=\bigcap_{(i, j)\in\gamma_{X}}\left\{V=(v_{1}, ..., v_{N})\in\mathbb{R}^{N}\,:\, v_{i}\geq v_{j}\right\}    
\end{equation*}
and the index set $\gamma_{X}\subset\{1, ..., N\}^{2}$ is defined by 
\begin{equation*}
\gamma_{X}:=\left\{(i, j)\in\{1, ..., N\}^{2}\,:\,x_{i}=x_{j} \right\}.    
\end{equation*}
\subsection{The Universal Invariant Manifold $\mathcal{M}$}
As discussed above, the motivation for this study stems from the fact that any billiard trajectory on $T\mathcal{B}$ which conserves both momentum and energy is not uniquely defined beyond collision for those initial data that lead to it reaching a corner of $\mathcal{B}$. As such, it is natural to distinguish the set of all initial data in $T\mathcal{Q}$ which lead to a simultaneous collision of all $N$ hard spheres.
\begin{defn}
For any $N\geq 3$, we define the set $\mathcal{M}\subset T\mathcal{Q}$ to be
\begin{equation*}
\mathcal{M}=\left\{
Z=(X, V)\in T\mathcal{Q}\,:\,X+tV=c\mathbf{1}\hspace{2mm}\text{for some}\hspace{1mm}t, c\in\mathbb{R}
\right\},
\end{equation*}
where $\mathbf{1}\in\mathbb{R}^{N}$ denotes the vector all of whose entries are 1. 
\end{defn}
As we study the case when all $N$ spheres in the system collide simultaneously, as opposed to $M<N$ spheres colliding simultaneously, the codimension of $\mathcal{M}$ in $T\mathcal{Q}$ is in this sense \emph{maximal}. Like the manifold $T\mathcal{Q}$ in which it is contained, the set $\mathcal{M}$ does not admit the structure of a smooth manifold. Rather, it can be shown that $\mathcal{M}$ admits the structure of a manifold with corners of dimension $N+2$. Its boundary $\partial\mathcal{M}\subset\mathcal{M}$ is given by
\begin{equation*}
\partial\mathcal{M}=\left\{
Z=(X, V)\in\mathcal{M}\,:\,x_{i}=x_{j}\hspace{2mm}\text{and}\hspace{2mm}v_{i}=v_{j}\hspace{2mm}\text{for some}\hspace{2mm}i<j
\right\}.
\end{equation*}
We note that the manifold $\mathcal{M}$ is \emph{universal} as an invariant submanifold of $T\mathcal{Q}$, in the sense that any billiard flow $\{T^{t}\}_{t\in\mathbb{R}}$ on $T\mathcal{Q}$ admits the property that $T^{t}(Z)\in\mathcal{M}$ for all $t\in\mathbb{R}$ if $Z\in\mathcal{M}$. Stated more plainly (and utilising the terminology of Section \ref{BAO} below), no matter the scattering a flow $\{T^{t}\}_{t\in\mathbb{R}}$ on $\mathcal{M}$ admits, the trajectory $t\mapsto T^{t}(Z)$ is a piecewise linear-in-time map admitting a single collision time. 
\subsection{A Natural Measure on $\mathcal{M}$}
It has also been discussed above that one analytical means that can be used to determine a `natural' way to extend billiard trajectories is through studying the properties of measures evolved by the associated flow operators to which the trajectories give rise. One `natural' measure on $\mathcal{M}\subset\mathbb{R}^{2N}$ is the restriction of the $(N+2)$-dimensional Hausdorff measure to $\mathcal{M}$, which we subsequently denote by $\mathscr{H}$. We refer the reader to (\cite{evans2018measure}, Chapter 2) for properties of the Hausdorff measure. There is a convenient formula which allows one to calculate the volume of subsets of the embedded submanifold $\mathcal{M}\subset\mathbb{R}^{2N}$ as measured by $\mathscr{H}$. We recall that if $\Psi:\Omega\rightarrow\mathcal{M}^{\circ}$ is a smooth diffeomorphism for some open $\Omega\subset\mathbb{R}^{N+2}$, then for any bounded open subset $E\subset\mathcal{M}^{\circ}$ it holds that
\begin{equation*}
\mathscr{H}(E)=\int_{\Psi^{-1}(E)}\omega\,d(\mathscr{L}\mres\Psi^{-1}(E)),    
\end{equation*}
where $\omega$ is the \emph{density} associated to the chart map $\Psi$ defined pointwise by
\begin{equation*}
\omega(\zeta):=\sqrt{\mathrm{det}(D\Psi(\zeta)^{T}D\Psi(\zeta))}    
\end{equation*}
for $\zeta\in\Omega$. We shall make frequent use of this representation formula in the sequel. Whilst $\mathscr{H}$ cannot be an invariant measure for billiard flows defined on $\mathcal{M}$ (a claim we prove in Section \ref{derifunky} below), we nevertheless find a canonical measure $\lambda$ on $\mathcal{M}$ which is absolutely continuous with respect to $\mathscr{H}$.
\subsection{Some Existing Results on the `One-dimensional' Model}
We do not aim here to provide a comprehensive review of the literature on billiard dynamics on $T\mathcal{Q}$ or $T\mathcal{P}$, drawing upon only some of the most relevant works to which our main results are pertinent. The existence of momentum- and energy-conserving billiard trajectories on $T\mathcal{B}$ for $N\geq 3$ has been covered, ostensibly, by Alexander in his 1975 Berkeley PhD thesis \cite{alexander1975infinite}. However, his method of construction of global-in-time billiard trajectories on $T\mathcal{B}$ (and therefore on $T\mathcal{Q}$) that conserve both momentum and energy holds only for initial points in $T\mathcal{B}$ outside a measure-theoretically null set, including those initial points that lead to the simultaneous collision of 3 or more hard spheres (\cite{alexander1975infinite}, page 14).  As such, the systematic approach in this article covers the case of initial points his theory cannot handle, but only in the case where motion of the $N$ spheres is directed along a line. Let us also mention that the case of momentum- and energy-conserving dynamics of hard spheres on a line whose radii and masses may differ has also been considered by other authors: see, for instance, the work of Murphy \cite{murphy1994dynamics}. The system of \emph{infinitely-many} hard spheres on a line has also attracted interest from a number of authors, in particular the works of Dobrushin and Fritz \cite{dobrushin1977non}, Lanford \cite{lanford1968classical}, as well as Sinai \cite{sinai1972construction}.

Aside from issues pertaining to the construction of dynamics on $T\mathcal{Q}$, the study of statistical properties of one-dimensional systems has been of interest as a `toy' in the hydrodynamics of particle systems. The study of hydrodynamic limits on $T\mathcal{Q}$ is more tractable than its counterpart study on the full hard sphere phase space $T\mathcal{B}$, and has produced Euler-like equations modelling the large-scale evolution of dynamics on $T\mathcal{Q}$ as $N\rightarrow\infty$ (\cite{kupershmidt1988kinetic}, \cite{boldrighini1983one}, \cite{boldrighini1984hydrodynamics}, \cite{kasperkovitz1985finite}). As this article includes the case $N=3$, let us also mention that ternary collisions in hard sphere systems have also been of recent interest: see, for instance, the work of Ampatzoglou and Pavlovi\'{c} \cite{ampatzoglou2021rigorous} on the derivation of a Boltzmann equation corresponding to dynamics for ternary particle interactions.
\subsection{Notation}\label{notation}
Let us now lay out some notation we employ throughout this article.
\subsubsection{Linear Algebra Conventions}
Suppose $M\geq 1$ and $N\geq 1$ are integers. If $X=(x_{1}, ..., x_{M})\in\mathbb{R}^{M}$ and $V=(v_{1}, ..., v_{M})\in\mathbb{R}^{M}$ are given, we denote their concatenation with round brackets by $(X, V):=(x_{1}, ..., x_{M}, v_{1}, ..., v_{M})\in\mathbb{R}^{2M}$. For any $i\in\{1, ..., M\}$, $e^{M}_{i}\in\mathbb{R}^{M}$ denotes the canonical Euclidean basis vector whose $i\textsuperscript{th}$ component is 1. In turn, $\mathbf{1}:=\sum_{i=1}^{M}e^{M}_{i}$ denotes the vector all of whose entries are 1. If $a\in\mathbb{R}^{M}$ and $b\in\mathbb{R}^{N}$ are given vectors, we write $a\otimes b\in\mathbb{R}^{M\times N}$ to denote the tensor product defined componentwise by $(a\otimes b)_{i, j}:=a_{i}b_{j}$. For $i, j\in\{1, ..., M\}$, $\delta_{i, j}$ denotes the Kronecker delta, while $\widetilde{\delta}_{ij}:=1-\delta_{i, j}$. We denote the identity matrix by $I_{M}\in\mathbb{R}^{M\times M}$ and the zero matrix by $0_{M}\in\mathbb{R}^{M\times M}$. When $M$ and $N$ are different integers, we denote by $0_{M\times N}\in\mathbb{R}^{N\times M}$ the matrix all of whose entries are 0. We denote the general linear group and the orthogonal groups by $\mathrm{GL}(M)$ and $\mathrm{O}(M)$, respectively. If $\{A_{i}\}_{i=1}^{m}\subset\mathbb{R}^{M\times M}$ is a finite sequence of square matrices, then we adopt the convention that
\begin{equation*}
\prod_{i=1}^{m}A_{i}:=A_{m}...A_{1}.    
\end{equation*} 
Finally, if $\Psi:\mathbb{R}^{M}\rightarrow\mathbb{R}^{N}$ is a differentiable map, then $D\Psi:\mathbb{R}^{M}\rightarrow\mathbb{R}^{N\times M}$ denotes the map defined componentwise by $(D\Psi)_{i, j}:=\partial_{j}\Psi_{i}$.
\subsubsection{Function Spaces}
We write $\mathrm{LSC}(\mathbb{R}, \mathbb{R}^{M})$ to denote the set of vector-valued maps whose component functions are lower semi-continuous on $\mathbb{R}$. We write $\mathrm{BV}_{\mathrm{loc}}(\mathbb{R}, \mathbb{R}^{M})$ to denote the set of all maps of locally bounded variation on $\mathbb{R}$: see Ambrosio, Fusco and Pallara \cite{ambrosio2000functions} for further details. In all that follows, we often abuse notation for the argument of functions. For instance, if $F:\mathbb{R}^{2N}\rightarrow \mathbb{R}$ is a function of the concatenated variable $Z=(X, V)$, we shall employ the notation $F(Z)$ and $F(X, V)$ synonymously. 
\subsubsection{Manifolds}
Let $\{A_{x}\,:\,x\in X\}$ be a family of sets with $A_{x}\subseteq Y$ for all $x\in X$. We write $\sqcup_{x\in X}A_{x}$ to denote the disjoint union given by
\begin{equation*}
\bigsqcup_{x\in X}A_{x}:=\left\{(x, a)\,:\,a\in A_{x}\right\}.    
\end{equation*}
We define the canonical projection maps $\Pi_{1}:\sqcup_{x\in X}A_{x}\rightarrow X$ and $\Pi_{2}:\sqcup_{x\in X}A_{x}\rightarrow Y$ pointwise by $\Pi_{1}((x, a)):=x$ and $\Pi_{2}((x, a)):= a$ for all $(x, a)\in\sqcup_{x\in X}A_{x}$.
\subsubsection{Measure Theory and Topology}
We write $\mres$ to denote the operation of measure restriction, and $\#$ to denote the pushforward operator. Given a subset $\Omega\subseteq\mathbb{R}^{M}$, we write $\mathsf{M}(\Omega)$ to denote the set of all Borel measures on $\Omega$. We write $C^{0}_{c}(\mathbb{R}^{M}, \mathbb{R})$ to denote the set of all compactly-supported continuous functions on $\mathbb{R}^{M}$.
\subsection{Rigorous Statements of Main Results}
Before we state our main results, we set out a few basic definitions pertaining to billiard dynamics on $T\mathcal{Q}$. For each integer $N\geq 3$, we write $\mathcal{V}^{-}\subset\mathbb{R}^{N}$ to denote the set of pre-collisional velocities
\begin{equation*}
\mathcal{V}^{-}=\bigcap_{i=1}^{N-1}\left\{V\in\mathbb{R}^{N}\,:\,V\cdot(e^{N}_{i}-e^{N}_{i+1})\geq 0\right\},   
\end{equation*}
and we write $\mathcal{V}^{+}\subset\mathbb{R}^{N}$ to denote the set of post-collisional velocities
\begin{equation*}
\mathcal{V}^{+}=\bigcap_{i=1}^{N-1}\left\{V\in\mathbb{R}^{N}\,:\,V\cdot(e^{N}_{i}-e^{N}_{i+1})\leq 0\right\}.   
\end{equation*}
In what follows, we call any bijection $\sigma:\mathcal{V}^{-}\rightarrow\mathcal{V}^{+}$ an \textbf{$N$-body scattering map}. Intuitively (and as we shall see in Section \ref{BAO} below), an $N$-body scattering map is a map that assigns a post-collisional velocity vector to a pre-collisional velocity vector and is, in turn, used to construct trajectories of an associated family of flow operators. Moreover, we say that $\sigma$ is a \textbf{momentum- and energy-conserving} $N$-body scattering map if and only if it satisfies additionally the conservation of linear momentum
\begin{equation*}
\mathsf{P}(\sigma(V))=\mathsf{P}(V)    
\end{equation*}
together with the conservation of kinetic energy
\begin{equation*}
\mathsf{E}(\sigma(V))=\mathsf{E}(V)    
\end{equation*}
for all $V\in\mathcal{V}^{-}$, where $\mathsf{P}:\mathbb{R}^{N}\rightarrow\mathbb{R}$ is the linear momentum map
\begin{equation}\label{linmommap}
\mathsf{P}(V):=\sum_{i=1}^{N}v_{i}    
\end{equation}
and $\mathsf{E}:\mathbb{R}^{N}\rightarrow\mathbb{R}$ is the energy map
\begin{equation}\label{enmap}
\mathsf{E}(V):=|V|^{2}.    
\end{equation}
We note that there is no need to consider angular momentum in the special case of motion on a line, owing to the fact that the polynomials which characterise the conservation of angular momentum reduce to those of the conservation of linear momentum in this case.

As we shall discover below, the restriction of the $(N+2)$-dimensional Hausdorff measure to $\mathcal{M}$ is \emph{not} an invariant measure for any billiard flow on $\mathcal{M}$. Nevertheless, there is a canonical invariant measure supported on the interior of $\mathcal{M}$ which our calculations reveal in the sequel. As such, we make the following definition for what we term the \emph{Liouville} measure on $\mathcal{M}$.
\begin{defn}[Liouville Measure on $\mathcal{M}$]
For $N\geq 3$, we define the \textbf{Liouville measure on $\mathcal{M}$} $\lambda$ by 
\begin{equation*}
\lambda:=L\mathscr{H},    
\end{equation*}
where the density $L:\mathcal{M}\rightarrow [0, \infty]$ is defined pointwise by 
\begin{equation*}
L(Z):=
\left\{
\begin{array}{ll}
\displaystyle \prod_{i<j}\left((v_{i}-v_{j})^{2}+(x_{i}-x_{j})^{2}\right)^{-\frac{N-2}{N(N-1)}} & \quad \text{if}\hspace{2mm}Z\in\mathcal{M}^{\circ}, \vspace{2mm}\\
\displaystyle \infty & \quad \text{otherwise}.
\end{array}
\right.
\end{equation*}
\end{defn}
\begin{rem}
The reason we have adopted the terminology ``Liouville measure on $\mathcal{M}$'' for $\lambda$ relates to the Liouville Theorem of classical statistical mechanics (see Khinchin \cite{aleksandr1949mathematical}, for example). Indeed, as a consequence of Theorem \ref{firstmainresult} below, our billiard flows $\{T^{t}_{\sigma}\}_{t\in\mathbb{R}}$ admit the property that, for any measurable subset $E\subset\mathcal{M}$, 
\begin{equation*}
\lambda(T^{-t}_{\sigma}(E))=\lambda(E)    
\end{equation*}
for all $t\in\mathbb{R}$. As such, the billiard flows $\{T^{t}_{\sigma}\}_{t\in\mathbb{R}}$ on $\mathcal{M}$ `preserve volumes' as measured by $\lambda$.
\end{rem}
Our first main result characterises all those $N$-body scattering maps which generate a billiard flow on $\mathcal{M}$ that admits the Liouville measure $\lambda$ as an invariant measure.
\begin{thm}\label{firstmainresult}
Let $N\geq 3$. Suppose $\sigma\in C^{0}(\mathcal{V}^{-}, \mathcal{V}^{+})\cap C^{1}((\mathcal{V}^{-})^{\circ}, (\mathcal{V}^{+})^{\circ})$ is a given $N$-body scattering map. It holds that $\sigma$ generates a billiard flow $\{T^{t}_{\sigma}\}_{t\in\mathbb{R}}$ on $\mathcal{M}$ with the property 
\begin{equation*}
T^{t}_{\sigma}\#\lambda=\lambda    
\end{equation*}
for all $t\in\mathbb{R}$ if and only if $\sigma$ is a classical solution of one the second boundary-value problems:
\begin{equation}\label{2ndbvpplus}
\left\{
\begin{array}{l}
\displaystyle \mathrm{det}(D\sigma(V))=\frac{H(V)}{H(\sigma(V))} \quad \text{for}\hspace{2mm}V\in(\mathcal{V}^{-})^{\circ}, \vspace{2mm}\\
\displaystyle \sigma(\mathcal{V}^{-})=\mathcal{V}^{+}
\end{array}
\right.
\end{equation}
or
\begin{equation}\label{2ndbvpminus}
\left\{
\begin{array}{l}
\displaystyle \mathrm{det}(D\sigma(V))=-\frac{H(V)}{H(\sigma(V))} \quad \text{for}\hspace{2mm}V\in(\mathcal{V}^{-})^{\circ}, \vspace{2mm}\\
\displaystyle \sigma(\mathcal{V}^{-})=\mathcal{V}^{+},
\end{array}
\right.
\end{equation}
where $H$ is defined pointwise by
\begin{equation}\label{RHS}
H(W):=\frac{\left(\displaystyle \sum_{i<j}(w_{i}-w_{j})^{2}\right)^{1/2}}{\left(\displaystyle \prod_{i<j}(w_{i}-w_{j})^{2}\right)^{\frac{N-2}{N(N-1)}}} 
\end{equation}
for $W=(w_{1}, ..., w_{N})\in(\mathcal{V}^{-})^{\circ}\cup(\mathcal{V}^{+})^{\circ}$.
\end{thm}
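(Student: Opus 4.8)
The plan is to trivialise the dynamics by introducing a single global flow-box coordinate system on the full-$\mathscr{H}$-measure part of $\mathcal{M}^{\circ}$, in which $\{T^{t}_{\sigma}\}$ becomes a rigid translation; invariance of $\lambda$ then collapses to the requirement that the coordinate density of $\lambda$ be constant along the flow. Concretely, since every trajectory in $\mathcal{M}$ possesses exactly one simultaneous collision (the universality remark preceding the statement), I would parametrise $\mathcal{M}^{\circ}$ -- up to the collision slice and the lower-dimensional strata of $\partial\mathcal{M}$, which I expect to be $\mathscr{H}$-null -- by $(c,r,V)\in\mathbb{R}\times\mathbb{R}\times(\mathcal{V}^{-})^{\circ}$, sending $(c,r,V)$ to the pre-collisional point $(c\mathbf{1}+rV,V)$ when $r<0$ and to the post-collisional point $(c\mathbf{1}+r\sigma(V),\sigma(V))$ when $r>0$. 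Here $V$ is the trajectory-invariant pre-collisional velocity, $c\mathbf{1}$ the collision point, and $r$ the signed time from collision. This is a bijection onto its image because $(X,V)\in\mathcal{M}$ forces $X\in\mathrm{span}(\mathbf{1},V)$ with $\mathbf{1},V$ independent (as $v_{1}>v_{N}$ on $(\mathcal{V}^{-})^{\circ}$), so $(c,r)$ are uniquely recovered. In these coordinates $T^{s}_{\sigma}$ is exactly $r\mapsto r+s$ with $c,V$ held fixed, whence $T^{s}_{\sigma}\#\lambda=\lambda$ for every $s$ if and only if the Lebesgue density of $\lambda$ in $(c,r,V)$ is independent of $r$.

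Next I would compute that density via the area formula recalled in the excerpt. On the pre-collisional branch the chart $\Psi(c,r,V)=(c\mathbf{1}+rV,V)$ has Jacobian columns $\partial_{c}\Psi=(\mathbf{1},0)$, $\partial_{r}\Psi=(V,0)$ and $\partial_{v_{k}}\Psi=(re^{N}_{k},e^{N}_{k})$, so the Gram matrix $G=D\Psi^{T}D\Psi$ has top-left $2\times2$ block $A=\left(\begin{smallmatrix} N & \mathsf{P} \\ \mathsf{P} & \mathsf{E}\end{smallmatrix}\right)$, bottom-right block $(1+r^{2})I_{N}$, and off-diagonal block $B$ satisfying the key identity $BB^{T}=r^{2}A$. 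The Schur complement of the lower block then collapses to $A/(1+r^{2})$, giving $\det G=(1+r^{2})^{N-2}\det A$ and hence $\omega=\sqrt{\det G}=(1+r^{2})^{\frac{N-2}{2}}\left(\sum_{i<j}(v_{i}-v_{j})^{2}\right)^{1/2}$, where I use $\det A=N\mathsf{E}-\mathsf{P}^{2}=\sum_{i<j}(v_{i}-v_{j})^{2}$.

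Since $x_{i}-x_{j}=r(v_{i}-v_{j})$ along this branch, the density $L$ evaluates to $(1+r^{2})^{-\frac{N-2}{2}}\prod_{i<j}(v_{i}-v_{j})^{-\frac{2(N-2)}{N(N-1)}}$, the power of $(1+r^{2})$ appearing precisely because $\binom{N}{2}\cdot\frac{N-2}{N(N-1)}=\frac{N-2}{2}$. Multiplying $L\circ\Psi$ by $\omega$, the two powers of $(1+r^{2})$ cancel and $\lambda$ pulls back to $H(V)\,dc\,dr\,dV$ on $\{r<0\}$; this cancellation is exactly what singles out the exponent in $L$ and makes $\lambda$, rather than $\mathscr{H}$, the natural candidate. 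The identical computation on the post-collisional branch, with $W=\sigma(V)$ in place of $V$, gives $\lambda=H(W)\,dc\,dr\,dW$, and the change of variables $W=\sigma(V)$ (so that $dW=|\det(D\sigma(V))|\,dV$) rewrites this as $H(\sigma(V))\,|\det(D\sigma(V))|\,dc\,dr\,dV$ on $\{r>0\}$. The coordinate density of $\lambda$ is therefore $H(V)$ for $r<0$ and $H(\sigma(V))|\det(D\sigma(V))|$ for $r>0$; each is $r$-independent within its half, so flow-invariance is equivalent to the matching condition $H(V)=H(\sigma(V))|\det(D\sigma(V))|$ across $r=0$, i.e. $|\det(D\sigma(V))|=H(V)/H(\sigma(V))$ on $(\mathcal{V}^{-})^{\circ}$. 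Because the right-hand side is strictly positive there, $\det(D\sigma)$ is continuous and nowhere zero on the connected set $(\mathcal{V}^{-})^{\circ}$, hence of constant sign, which yields exactly the two PDEs of \eqref{2ndbvpplus} and \eqref{2ndbvpminus}; the boundary condition $\sigma(\mathcal{V}^{-})=\mathcal{V}^{+}$ is simply the surjectivity built into the definition of an $N$-body scattering map.

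I expect the principal obstacle to be the measure-theoretic bookkeeping that makes the flow-box reduction rigorous rather than the algebra. One must verify that the collision slice $\{r=0\}$ and the corner strata of $\partial\mathcal{M}$ are genuinely $\mathscr{H}$-null, that the chart above is a bona fide $C^{1}$ parametrisation onto a set of full $\lambda$-measure, and that pushforward under $T^{s}_{\sigma}$ may be computed branch-by-branch in these coordinates (so that the change of variables $W=\sigma(V)$, valid via the area formula for the $C^{1}$ injection $\sigma$, legitimately transports $\mathscr{H}$ and $L$ simultaneously). By contrast, the Gram-determinant evaluation -- the computational crux -- is routine once the block identity $BB^{T}=r^{2}A$ is noticed.
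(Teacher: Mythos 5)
Your argument is correct, and it takes a genuinely different route from the paper. The paper fixes $t$, splits $\mathcal{M}^{\circ}$ into the regions $\mathcal{M}_{t}^{-}$ and $\mathcal{M}_{t}^{+}$, works in the chart over $\mathcal{Q}^{\circ}\times(\mathbb{R}^{2}\setminus\Delta)$ that slaves $v_{3},\dots,v_{N}$ to $v_{1},v_{2}$, and grinds through the Jacobians of the reduced flow maps $\widetilde{T}^{t}$ and $\widetilde{T}^{t}_{\sigma}$ and the transformation law of the surface density $\omega$; the resulting identities carry a spurious bias on the indices $1,2$ that must be removed by re-deriving everything in the charts indexed by $(k,\ell)$ and symmetrising. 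Your flow-box coordinates $(c,r,V)$ exploit the universality of the single collision on $\mathcal{M}$ so that every $T^{s}_{\sigma}$ becomes the translation $r\mapsto r+s$, and the entire computation collapses to one Gram determinant (your identity $BB^{T}=r^{2}A$ together with the Schur complement gives $\det G=(1+r^{2})^{N-2}\sum_{i<j}(v_{i}-v_{j})^{2}$, which checks out, as does $\det A=N\mathsf{E}-\mathsf{P}^{2}=\sum_{i<j}(v_{i}-v_{j})^{2}$) plus a matching condition across $r=0$. This buys three things the paper's route does not make transparent: the exponent $\frac{N-2}{N(N-1)}$ in $L$ is \emph{explained} by the cancellation of the $(1+r^{2})$ powers rather than discovered through the functional equation; the necessity direction (which the paper leaves to the reader) is immediate, since $r$-independence of the coordinate density is manifestly equivalent to invariance under all translations; and the index symmetrisation step disappears entirely. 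The constant-sign dichotomy from connectedness of the open cone $(\mathcal{V}^{-})^{\circ}$ correctly accounts for the two boundary-value problems. What the paper's heavier machinery buys in exchange is the functional equation $m(X-tV,V)=m(X,V)$ and its solution, which feeds Theorem \ref{densitythm} on more general invariant densities $M=m_{0}L$; your coordinates would deliver that too (an $r$-independent, $\sigma$-symmetric factor in $(c,V)$), but you would need to say so explicitly. The measure-theoretic caveats you flag (nullity of the collision slice and of $\partial\mathcal{M}$, branch-by-branch computation of the pushforward) are real but routine, and are no worse than the analogous gaps the paper itself leaves implicit.
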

The Jacobian PDE in the second boundary-value problems \eqref{2ndbvpplus} and \eqref{2ndbvpminus} are highly nonlinear and therefore challenging to solve. We shall identify \emph{linear} solutions $\sigma$ of these problems below. 

Our second main result finds sufficient conditions on the form of the density of an invariant measure of a billiard flow on $\mathcal{M}$ which is absolutely continuous with respect to the Liouville measure. 
\begin{thm}\label{densitythm}
Let $N\geq 3$. Suppose $\sigma\in C^{0}(\mathcal{V}^{-}, \mathcal{V}^{+})\cap C^{1}((\mathcal{V}^{-})^{\circ}, (\mathcal{V}^{+})^{\circ})$ is a momentum- and energy-conserving $N$-body scattering map, and let $\{T^{t}_{\sigma}\}_{t\in\mathbb{R}}$ denote the billiard flow on $\mathcal{M}$ it generates. Suppose, moreover, that $\mu\ll \lambda$, with
\begin{equation*}
\frac{d\mu}{d\lambda}=M    
\end{equation*}
for some measurable function $M:\mathcal{M}\rightarrow\mathbb{R}$. If
$M$ is of the form
\begin{equation*}
M(Z)=m_{0}(X\cdot(\mathsf{E}(V)\mathbf{1}-\mathsf{P}(V)V), V)    
\end{equation*}
for $Z=(X, V)\in\mathcal{M}$ and some measurable $m_{0}:\mathbb{R}\times\mathbb{R}^{N}\rightarrow\mathbb{R}$ admitting the scattering symmetry
\begin{equation}\label{symcon}
m_{0}(y, \sigma(V))=m_{0}(y, V)    
\end{equation}
for $y\in\mathbb{R}$ and $V\in\mathcal{V}^{-}$, then $T^{t}_{\sigma}\#\mu=\mu$ for all $t\in\mathbb{R}$.
\end{thm}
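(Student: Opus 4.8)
The plan is to straighten the flow into a pure translation and then to read off invariance of $\mu$ as the statement that the density of $\mu$ in the straightening chart does not depend on the flow-time coordinate. First I would introduce collision coordinates on $\mathcal{M}^{\circ}$: for $V\in(\mathcal{V}^{-})^{\circ}$, $c\in\mathbb{R}$ and $s\in\mathbb{R}$ set $\Phi(V,c,s):=(c\mathbf{1}-sV,\,V)$ when $s>0$ and $\Phi(V,c,s):=(c\mathbf{1}-s\sigma(V),\,\sigma(V))$ when $s<0$, so that $c$ is the common collision position, $s$ the signed time to collision, and $V$ the pre-collisional velocity. Using the explicit piecewise-linear form of the billiard flow on $\mathcal{M}$, one checks that $\Phi$ is a bijection onto $\mathcal{M}^{\circ}$ off the measure-zero collision set and that $T^{t}_{\sigma}\circ\Phi(V,c,s)=\Phi(V,c,s-t)$, i.e. the flow is translation in $s$. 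Writing $\mu=ML\mathscr{H}$ and using the area-formula density $\omega=\sqrt{\det(D\Phi^{T}D\Phi)}$ of $\mathscr{H}$, the condition $T^{t}_{\sigma}\#\mu=\mu$ for all $t$ becomes equivalent to the chart density $\rho:=(M\circ\Phi)(L\circ\Phi)\,\omega$ being independent of $s$; since $\Phi$ is smooth off $\{s=0\}$, this splits into constancy of $\rho$ on each of $\{s>0\}$, $\{s<0\}$ together with the matching condition $\rho|_{s\to 0^{+}}=\rho|_{s\to 0^{-}}$.

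Next I would dispose of the factor $M$. A short computation gives $X\cdot(\mathsf{E}(V)\mathbf{1}-\mathsf{P}(V)V)=c\,(N\mathsf{E}(V)-\mathsf{P}(V)^{2})$ along $\Phi$, because $V\cdot(\mathsf{E}(V)\mathbf{1}-\mathsf{P}(V)V)=0$ and $\mathbf{1}\cdot(\mathsf{E}(V)\mathbf{1}-\mathsf{P}(V)V)=N\mathsf{E}(V)-\mathsf{P}(V)^{2}$. Momentum and energy conservation make this value common to the two branches, so the first argument $y$ of $m_{0}$ is independent of $s$. Since the velocity entry of $\Phi$ is $V$ on $\{s>0\}$ and $\sigma(V)$ on $\{s<0\}$, the scattering symmetry $m_{0}(y,\sigma(V))=m_{0}(y,V)$ then forces $M\circ\Phi$ to be independent of $s$ and continuous across $s=0$. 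Thus $M$ factors out of the matching condition, which collapses to a statement purely about the geometric density $L\omega$.

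Finally I would compute $L\omega$ on each branch. On $\{s>0\}$ the Gram matrix $D\Phi^{T}D\Phi$ has leading block $(1+s^{2})I_{N}$, and its Schur complement reduces to the $2\times2$ block built from $N$, $\mathsf{P}(V)$, $\mathsf{E}(V)$, giving $\omega=(1+s^{2})^{(N-2)/2}(N\mathsf{E}(V)-\mathsf{P}(V)^{2})^{1/2}$; the substitution $x_{i}-x_{j}=-s(v_{i}-v_{j})$ gives $L\circ\Phi=(1+s^{2})^{-(N-2)/2}\prod_{i<j}((v_{i}-v_{j})^{2})^{-\frac{N-2}{N(N-1)}}$. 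The powers of $1+s^{2}$ cancel and, since $N\mathsf{E}-\mathsf{P}^{2}=\sum_{i<j}(v_{i}-v_{j})^{2}$, one gets $L\omega=H(V)$, independent of $s$ (in particular $\lambda$ is preserved under free flight). On $\{s<0\}$ the same reduction applies with $D\sigma(V)$ inserted into the columns; here the identities $D\sigma(V)^{T}\mathbf{1}=\mathbf{1}$ and $D\sigma(V)^{T}\sigma(V)=V$, obtained by differentiating the conservation laws, collapse the Schur complement to exactly the same $2\times2$ block and yield $L\omega=|\det D\sigma(V)|\,H(\sigma(V))$.

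The main obstacle is precisely this post-collisional Gram-determinant computation: it is the step where momentum and energy conservation must be used, through the two derivative identities, to force the $s$-dependence and the $\sigma$-dependence to organise into the single clean factor $|\det D\sigma(V)|\,H(\sigma(V))$. Combining the three paragraphs, the matching condition reduces to $H(V)=|\det D\sigma(V)|\,H(\sigma(V))$, i.e. to the second boundary-value problem of Theorem \ref{firstmainresult}; the scattering symmetry of $m_{0}$ then accounts for the residual freedom in $M$. I would therefore organise the write-up around the clean packaging $T^{t}_{\sigma}\#(M\lambda)=M\lambda$ whenever $M\circ T^{t}_{\sigma}=M$ and $T^{t}_{\sigma}\#\lambda=\lambda$: the conserved quantity $y$ together with the scattering symmetry deliver the first hypothesis, while invariance of $\lambda$ for the momentum- and energy-conserving flow (via Theorem \ref{firstmainresult}) delivers the second, the two combining by a single change of variables to give $T^{t}_{\sigma}\#\mu=\mu$.
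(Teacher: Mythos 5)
Your argument is correct in its computations and takes a genuinely different route from the paper. The paper proves Theorem \ref{densitythm} by rerunning the machinery of Sections \ref{emmteeminus}--\ref{emmplus}: it keeps the chart $\Psi(X,u_{1},u_{2})$, splits $\mathcal{M}^{\circ}$ for each fixed $t$ into the pre-collisional region $\mathcal{M}_{t}^{-}$ and the post-collisional region $\mathcal{M}_{t}^{+}$, and handles each with its own reduced flow map ($\widetilde{T}^{t}$, resp.\ $\widetilde{T}^{t}_{\sigma}$), its Jacobian, and the structural lemmas for $\omega\circ\widetilde{T}^{\pm t}$; the density $M$ is then carried through the same change of variables, with the functional-equation form of $m_{0}$ absorbing the $\mathcal{M}_{t}^{-}$ contribution and the scattering symmetry \eqref{symcon} absorbing the $\mathcal{M}_{t}^{+}$ contribution. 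You instead pass once and for all to collision coordinates $(V,c,s)$ in which the flow is a translation in $s$, so that invariance of $\mu$ becomes $s$-independence of a single chart density, split only across $\{s>0\}$ and $\{s<0\}$ with a matching condition at $s=0$. This buys a substantially cleaner computation: the Gram determinant reduces by a Schur complement to the $2\times2$ block in $N$, $\mathsf{P}(V)$, $\mathsf{E}(V)$ (I checked your post-collisional case: the identities $D\sigma^{T}\mathbf{1}=\mathbf{1}$ and $D\sigma^{T}\sigma=V$ do give $(D\sigma^{T})^{-1}\mathbf{1}=\mathbf{1}$ and $(D\sigma^{T})^{-1}V=\sigma$, which is exactly what collapses $B^{T}A^{-1}B$ to the same block), the conserved quantity $X\cdot(\mathsf{E}(V)\mathbf{1}-\mathsf{P}(V)V)=c\,(N\mathsf{E}(V)-\mathsf{P}(V)^{2})$ appears automatically rather than being extracted by solving a functional equation, and the function $H$ emerges directly as $L\omega$ on each branch. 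The price is that your chart degenerates on $\{s=0\}$ and on $\partial\mathcal{V}^{\pm}$, so the claim that it is a diffeomorphism onto $\mathcal{M}^{\circ}$ off a null set, and the reduction of pushforward invariance to $s$-independence of the density, deserve a line or two of justification (they do hold, for the reasons you indicate).

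One point needs to be stated more carefully in your final assembly. Your matching condition is $H(V)=|\det D\sigma(V)|\,H(\sigma(V))$, i.e.\ exactly the Jacobian PDE of Theorem \ref{firstmainresult}; but Theorem \ref{firstmainresult} asserts an \emph{equivalence} between $\lambda$-invariance and that PDE, not that every momentum- and energy-conserving scattering map satisfies it (conservation forces only the numerators $\sum_{i<j}(w_{i}-w_{j})^{2}$ of $H(V)$ and $H(\sigma(V))$ to agree, not the full ratio nor the value of $\det D\sigma$). So the sentence ``invariance of $\lambda$ for the momentum- and energy-conserving flow (via Theorem \ref{firstmainresult}) delivers the second hypothesis'' overstates what that theorem gives, and the hypothesis $T^{t}_{\sigma}\#\lambda=\lambda$ (equivalently, that $\sigma$ solves one of the second boundary-value problems) must be invoked explicitly at this point. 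This is not a defect peculiar to your write-up --- the paper's own proof of Theorem \ref{densitythm} defers to the proof of Theorem \ref{firstmainresult}, whose treatment of the integral $J_{t}$ likewise uses the PDE --- but you should make the dependence explicit rather than attributing it to momentum and energy conservation alone.
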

Finally, our third main result identifies the unique linear $N$-body scattering map that conserves momentum and energy, and in turn shows that the billiard flow it generates admits the Liouville measure on $\mathcal{M}$ as an invariant measure.
\begin{thm}\label{uniquelinear}
Let $N\geq 3$. There exists a unique linear momentum- and energy-conserving $N$-body scattering map $\sigma^{\star}:\mathcal{V}^{-}\rightarrow\mathcal{V}^{+}$ given by
\begin{equation}\label{linscatmap}
\sigma^{\star}(V):=\left(\frac{2}{N}\mathbf{1}\otimes\mathbf{1}-I_{N}\right)V    
\end{equation}
for all $V\in\mathcal{V}^{-}$. Moreover, $\sigma^{\star}$ generates a momentum- and energy-conserving billiard dynamics $\{T^{t}_{\star}\}_{t\in\mathbb{R}}$ on $\mathcal{M}$ with the property that
\begin{equation*}
T^{t}_{\star}\#\lambda=\lambda    
\end{equation*}
for all $t\in\mathbb{R}$.
\end{thm}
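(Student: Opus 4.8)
The plan is to establish the three assertions—existence of a linear momentum- and energy-conserving scattering map, its uniqueness, and the $\lambda$-invariance of the flow it generates—in turn, deferring the last entirely to Theorem \ref{firstmainresult}. For existence I would simply verify that the candidate $A:=\frac{2}{N}\mathbf{1}\otimes\mathbf{1}-I_{N}$ has the required properties. Writing $J:=\mathbf{1}\otimes\mathbf{1}$ and using $J^{2}=NJ$, a direct computation gives $A^{2}=I_{N}$; since $A$ is symmetric this yields $A\in\mathrm{O}(N)$, whence $\mathsf{E}(AV)=|AV|^{2}=|V|^{2}=\mathsf{E}(V)$. From $J\mathbf{1}=N\mathbf{1}$ one gets $A\mathbf{1}=\mathbf{1}$, equivalently $\mathbf{1}^{T}A=\mathbf{1}^{T}$, which is exactly $\mathsf{P}(AV)=\mathsf{P}(V)$. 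The range condition follows from the pointwise identity $(AV)_{i}=\frac{2}{N}\mathsf{P}(V)-v_{i}$: if $v_{1}\geq\cdots\geq v_{N}$ then these quantities are non-decreasing in $i$, so $A(\mathcal{V}^{-})\subseteq\mathcal{V}^{+}$, and since $A^{2}=I_{N}$ the involution $A$ swaps $\mathcal{V}^{-}$ and $\mathcal{V}^{+}$, giving a bijection $\mathcal{V}^{-}\to\mathcal{V}^{+}$.

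For uniqueness, write the linear map as $\sigma(V)=AV$. Energy conservation forces the quadratic form $V\mapsto V^{T}(A^{T}A-I_{N})V$ to vanish on the open cone $(\mathcal{V}^{-})^{\circ}$, hence identically, so $A\in\mathrm{O}(N)$; momentum conservation forces $A^{T}\mathbf{1}=\mathbf{1}$ on an open set, and orthogonality then gives $A\mathbf{1}=\mathbf{1}$. Thus $A$ fixes the line $\mathbb{R}\mathbf{1}$ and restricts to an orthogonal map $B:=A|_{W}$ of $W:=\mathbf{1}^{\perp}=\{\mathsf{P}=0\}$. Since $\mathbb{R}\mathbf{1}$ is the lineality space of both $\mathcal{V}^{\pm}$, the range condition $A(\mathcal{V}^{-})=\mathcal{V}^{+}=-\mathcal{V}^{-}$ is equivalent to $(-B)(C)=C$, where $C:=\mathcal{V}^{-}\cap W$; that is, $-B$ is an orthogonal automorphism of the cone $C$. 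Here $C$ is the closed Weyl chamber of type $A_{N-1}$ cut out by $v_{i}\geq v_{i+1}$ inside $W$, with extreme rays $r_{k}$ ($k=1,\dots,N-1$) the projections of $e_{1}+\cdots+e_{k}$, satisfying $|r_{k}|^{2}=k(N-k)/N$ and $r_{j}\cdot r_{k}=j(N-k)/N$ for $j\leq k$. An orthogonal automorphism must permute these rays isometrically, and the whole point is to show the permutation is trivial, so that $-B=\mathrm{id}_{W}$ and $A=\frac{2}{N}\mathbf{1}\otimes\mathbf{1}-I_{N}$.

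The main obstacle is precisely this last step. The metric data just recorded is invariant under the order-reversal $k\mapsto N-k$, which is realised by the coordinate-reversing permutation $R$ and is the nontrivial diagram automorphism of $A_{N-1}$; consequently isometry alone only gives $A\in\{\frac{2}{N}\mathbf{1}\otimes\mathbf{1}-I_{N},\,R\}$ and does not select between the two. I would rule out $R$ by invoking the indistinguishability of the spheres, i.e. the permutation symmetry inherent in the $N$-body problem: the admissible scattering map must commute with every coordinate permutation, equivalently reverse every pairwise relative velocity, $(AV)_{i}-(AV)_{j}=-(v_{i}-v_{j})$. This fails for $R$ but holds for $\frac{2}{N}\mathbf{1}\otimes\mathbf{1}-I_{N}$; and since the commutant of the permutation action is spanned by $I_{N}$ and $\mathbf{1}\otimes\mathbf{1}$, imposing $A=\alpha I_{N}+\beta(\mathbf{1}\otimes\mathbf{1})$ together with $A\in\mathrm{O}(N)$, $A\mathbf{1}=\mathbf{1}$ and $A\neq I_{N}$ forces $\alpha=-1$, $\beta=2/N$ uniquely. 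Identifying the cleanest formulation of this symmetry input, and confirming it is the right selection principle rather than an ad hoc exclusion, is the delicate part of the argument.

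Finally, for the invariance of $\lambda$ I would appeal directly to Theorem \ref{firstmainresult}. Since $A=\frac{2}{N}\mathbf{1}\otimes\mathbf{1}-I_{N}$ has eigenvalue $+1$ on $\mathbb{R}\mathbf{1}$ and $-1$ on the $(N-1)$-dimensional space $W$, one has $\mathrm{det}(D\sigma^{\star})=\mathrm{det}(A)=(-1)^{N-1}$. Moreover $\sigma^{\star}$ negates every difference $v_{i}-v_{j}$, so the function $H$ of \eqref{RHS}, which depends only on the squared differences $(w_{i}-w_{j})^{2}$, satisfies $H(\sigma^{\star}(V))=H(V)$. Hence $\mathrm{det}(D\sigma^{\star})=(-1)^{N-1}H(V)/H(\sigma^{\star}(V))$, so $\sigma^{\star}$ is a classical solution of \eqref{2ndbvpplus} when $N$ is odd and of \eqref{2ndbvpminus} when $N$ is even, together with the boundary condition $\sigma^{\star}(\mathcal{V}^{-})=\mathcal{V}^{+}$ already verified above. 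Theorem \ref{firstmainresult} then yields $T^{t}_{\star}\#\lambda=\lambda$ for all $t\in\mathbb{R}$, completing the proof.
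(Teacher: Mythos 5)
Your existence check and your final step both coincide with the paper's: the paper likewise verifies $A^{2}=I_{N}$, $A\in\mathrm{O}(N)$, $A\mathbf{1}=\mathbf{1}$ and $AE_{i}^{N}=-E_{i}^{N}$ for the candidate $A^{\star}=\frac{2}{N}\mathbf{1}\otimes\mathbf{1}-I_{N}$, and then (in its closing Proposition) computes $\mathrm{det}(D\sigma^{\star})=(-1)^{N-1}$ and $H(\sigma^{\star}(V))=H(V)$ before invoking Theorem \ref{firstmainresult}, exactly as you do. The divergence is in the uniqueness step, and there you have put your finger on the weak point of the paper's own argument. The paper asserts that the range condition forces $A^{T}E_{i}^{N}=\alpha_{i}E_{i}^{N}$ with $\alpha_{i}<0$ for the \emph{same} index $i$ (``by a contradiction argument''); but a linear bijection of $\mathcal{V}^{-}$ onto $\mathcal{V}^{+}$ only forces $A^{T}$ to carry the extreme rays $\{E_{i}^{N}\}$ of the dual cone to negative multiples of a \emph{permutation} of themselves, and, as you observe, the relevant Gram data admit the nontrivial diagram automorphism $i\mapsto N-i$. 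The corresponding second candidate, the coordinate reversal $R$, is linear, orthogonal, fixes $\mathbf{1}$, maps $\mathcal{V}^{-}$ bijectively onto $\mathcal{V}^{+}$, and satisfies $R^{2}=I_{N}$ and $R^{-1}(V)=-R(-V)$, so it meets every hypothesis of the theorem and of Assumption \ref{Ass}; it even satisfies $H(RV)=H(V)$ and $|\mathrm{det}(R)|=1$, so the invariance criterion of Theorem \ref{firstmainresult} does not separate it from $\sigma^{\star}$ either. Your two-candidate reduction is therefore correct and more careful than the paper's.

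The genuine gap is that neither you nor the paper excludes $R$ from the hypotheses as stated. Your proposed selection principle, equivariance under $\mathfrak{s}(N)$ (equivalently $(AV)_{i}-(AV)_{j}=-(v_{i}-v_{j})$ for all pairs), does the job cleanly, since the commutant of the permutation action is $\mathrm{span}\{I_{N},\mathbf{1}\otimes\mathbf{1}\}$ and the remaining constraints then pin down $\alpha=-1$, $\beta=2/N$; but it is an \emph{additional axiom}, not a consequence of the definition of an admissible momentum- and energy-conserving $N$-body scattering map, and you rightly flag it as such. The paper's implicit extra assumption is a different one: the displayed implication ``$\sigma(V)\cdot E_{i}^{N}\geq 0$ whenever $V\cdot E_{i}^{N}\leq 0$'' amounts to requiring that a pair $(i,i+1)$ with vanishing relative velocity before the simultaneous collision has vanishing relative velocity after, which forces the facet pairing to be the identity and kills the diagram automorphism. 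Either condition must be added to the statement (or derived from some physical continuity requirement linking $N$-body scattering to the lower-order collisions) for the uniqueness claim to be rigorous; as written, both proofs leave this step open, and yours is the one that makes the issue explicit.
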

\begin{rem}
We note that, even though the case of binary interactions is not considered in this article, the formula \eqref{linscatmap} recovers the case of the unique linear 2-body scattering map
\begin{equation*}
\sigma^{\star}(V)=\left(
\begin{array}{cc}
0 & 1 \\
1 & 0
\end{array}
\right)V    
\end{equation*}
which conserves linear momentum and energy when $N=2$.
\end{rem}
\subsection{Structure of the Paper}
In Section \ref{BAO}, we introduce some of the basic objects with which we work throughout. In Sections \ref{emmteeminus} and \ref{emmplus}, in what amounts to a rather involved application of the Change of Variables theorem, we process the information in those integrals under study not only to derive the PDE characterising $N$-body scattering maps of billiard flows on $\mathcal{M}$ which admit the Liouville measure $\lambda$ on $\mathcal{M}$ as an invariant measure, but also a functional equation whose solutions generate measures which are absolutely-continuous with respect to $\lambda$. In Section \ref{uniquelinny}, we find the unique momentum- and energy-conserving $N$-body scattering which is a linear map and show, in turn, the billiard flow on $\mathcal{M}$ it generates admits $\lambda$ as an invariant measure. 
\section{Basic Auxiliary Objects}\label{BAO}
We introduce the basic objects of study in what follows. For notational convenience, and to avoid decorating symbols for maps and spaces with too many sub- and superscripts, we do not in general denote dependence of objects on the number of spheres $N\geq 3$ under study. 
\subsection{A Chart for the Interior of $\mathcal{M}$}\label{chartinfo}
Moving forward, owing to the fact we shall build integrals over $\mathcal{M}$ with respect to the Hausdorff measure which are insensitive to the values of maps on the boundary of $\mathcal{M}$, we focus our attention on the interior of $\mathcal{M}$, namely $\mathcal{M}^{\circ}=\mathcal{M}\setminus\partial\mathcal{M}$. In particular, we shall make extensive use of the \emph{chart map} $\Psi:\Omega\rightarrow\mathcal{M}^{\circ}$ which charts the interior of the codimension $(N-2)$ subset $\mathcal{M}\subset T\mathcal{Q}$ of $\mathbb{R}^{2N}$ pointwise by
\begin{equation*}
\Psi(\zeta):=\sum_{i=1}^{N}(e_{i}^{2N}\otimes e_{i}^{2N})\zeta+\sum_{i=1}^{N}(e_{i+N}^{2N}\otimes e_{i}^{N})\psi(\zeta),    
\end{equation*}
for $\zeta=(X, U)\in\Omega$, where
\begin{equation*}
\Omega:=\mathcal{Q}^{\circ}\times(\mathbb{R}^{2}\setminus\Delta)    
\end{equation*}
and $X=(x_{1}, ..., x_{N})\in\mathcal{Q}^{\circ}$ with $U=(u_{1}, u_{2})\in\mathbb{R}^{2}\setminus\Delta$, where $\Delta\subset\mathbb{R}^{2}$ denotes the diagonal
\begin{equation*}
\Delta:=\left\{U=(u_{1}, u_{2})\in\mathbb{R}^{2}\,:\,u_{1}=u_{2}\right\},    
\end{equation*}
and where the \emph{velocity map} $\psi:\Omega\rightarrow\mathbb{R}^{N}$ is defined pointwise by
\begin{equation*}
\psi_{i}(\zeta):=
\left\{
\begin{array}{ll}
\displaystyle u_{i} & \quad \text{if}\hspace{2mm}i\in\{1, 2\}, \vspace{2mm}\\
\displaystyle \frac{(x_{i}-x_{2})u_{1}-(x_{i}-x_{1})u_{2}}{x_{1}-x_{2}} & \quad \text{if} \hspace{2mm} i\in\{3, ..., N\}.
\end{array}
\right.
\end{equation*}
Manifestly, it holds that $\Psi\in C^{\infty}(\Omega, \mathcal{M}^{\circ})$ and one can check that it is a smooth diffeomorphism. Using this chart, $\mathcal{M}^{\circ}$ may be characterised in bundle notation as
\begin{equation*}
\mathcal{M}^{\circ}=\bigsqcup_{X\in\mathcal{Q}^{\circ}}F_{X},    
\end{equation*}
where the fibres $F_{X}\subset\mathbb{R}^{N}$ are defined for each $X$ by
\begin{equation*}
F_{X}=\left\{
\psi(\zeta)\in\mathbb{R}^{N}\,:\,U\in\mathbb{R}^{2}
\right\},
\end{equation*}
noting that $\zeta=(X, U)$.
\begin{rem}
There are many `natural' ways by which one can define a chart map $\Psi$ for the interior of $\mathcal{M}$, and our particular choice which slaves the velocity the spheres labelled by $i\in\{3, ..., N\}$ to those labelled by 1 and 2 admits a signature in our calculations, as we shall see in the sequel. 
\end{rem}
\subsection{Billiard Flows on $\mathcal{M}$}
In this Section, we set out the analytical properties we require of billiard flows on the manifold $\mathcal{M}$ in all that follows. We do not concern ourselves with the construction of flows defined on the full phase space $T\mathcal{Q}$ as this has been handled by other authors, viz. Alexander \cite{alexander1975infinite}. The systematic study of dynamics on $\mathcal{M}$ has not, to our knowledge, received attention in the literature.
\subsubsection{Momentum- and Energy-conserving Billiard Flows on $T\mathcal{Q}$}
We begin with the following basic definition of billiard flow.
\begin{defn}[Billiard Flow]
We say that a one-parameter family $\{T^{t}\}_{t\in\mathbb{R}}$ of maps $T^{t}:T\mathcal{Q}\rightarrow T\mathcal{Q}$ is a \textbf{billiard flow} on $T\mathcal{Q}$ if and only if for each $Z\in T\mathcal{Q}$, it holds that
\begin{itemize}
\item the map $t\mapsto (\Pi_{1}\circ T^{t})(Z)$ is a piecewise linear map of class $C^{0}(\mathbb{R}, T\mathcal{Q})$ which satisfies
\begin{equation*}
\frac{d}{dt_{\pm}}(\Pi_{1}\circ T^{t})(Z)=\lim_{s\rightarrow t\pm}(\Pi_{2}\circ T^{s})(Z)    
\end{equation*}
for all $t\in\mathbb{R}$;
\item the map $t\mapsto (\Pi_{2}\circ T^{t})(Z)$ is of class $\mathrm{LSC}(\mathbb{R}, \mathbb{R}^{N})\cap \mathrm{BV}_{\mathrm{loc}}(\mathbb{R}, \mathbb{R}^{N})$.
\end{itemize}
We term the members $T^{t}$ of the billiard flow $\{T^{t}\}_{t\in\mathbb{R}}$ \textbf{flow maps}.
\end{defn}
We shall work exclusively on the manifold $\mathcal{M}\subset T\mathcal{Q}$. We subsequently abuse notation and denote the restriction $T^{t}|_{\mathcal{M}}$ of any flow map $T^{t}$ to the manifold $\mathcal{M}$ simply by $T^{t}$. As we focus our attention only on those billiard flows which conserve momentum and energy, we also require the following definition.
\begin{defn}[Momentum- and Energy-conserving Billiard Flow]
We say that a billiard flow $\{T^{t}\}_{t\in\mathbb{R}}$ on $T\mathcal{Q}$ is \textbf{momentum- and energy-conserving} if and only if for each $Z\in T\mathcal{Q}$ it holds that
\begin{equation*}
\mathsf{P}((\Pi_{2}\circ T^{t})(Z))=\mathsf{P}(\Pi_{2}(Z))    
\end{equation*}
and
\begin{equation*}
\mathsf{E}((\Pi_{2}\circ T^{t})(Z))=\mathsf{E}(\Pi_{2}(Z))    
\end{equation*}
for all $t\in\mathbb{R}$.
\end{defn}
Let us once again mention that, in the case of hard sphere motion on a line, the conservation of angular momentum is redundant if the conservation of linear momentum is enforced.
\subsubsection{Scattering Maps for Billiard Flows on $\mathcal{M}$}
We now set out some definitions which are relevant to the simultaneous collision of $N$ hard spheres. 
\begin{defn}[Collision Time Map on $\mathcal{M}^{\circ}$]
Let $N\geq 3$. We write $\tau:\mathcal{M}^{\circ}\rightarrow\mathbb{R}$ to denote the \textbf{collision time map on $\mathcal{M}^{\circ}$} pointwise by
\begin{equation}\label{coll1and2}
\tau(Z):=-\frac{x_{2}-x_{1}}{v_{2}-v_{1}}    
\end{equation}
for $Z=(X, V)\in\mathcal{M}^{\circ}$.
\end{defn}
We note that the definition of $\tau$ appears to involve only those variables indexed by 1 and 2. However, it can be checked that
\begin{equation*}
\tau(Z)=-\frac{x_{i}-x_{j}}{v_{i}-v_{j}}    
\end{equation*}
for any $i, j\in\{1, ..., N\}$ with $i<j$. Nevertheless, owing to our explicit choice of chart map for $\mathcal{M}^{\circ}$ given in Section \ref{chartinfo} above, we shall adopt the definition of the collision time map \eqref{coll1and2} involving the indices 1 and 2. With this definition in hand, we may now define what we mean by a \emph{scattering map} of a billiard flow $\{T^{t}\}_{t\in\mathbb{R}}$ on $\mathcal{M}$.
\begin{defn}[Scattering Map of a Billiard Flow on $\mathcal{M}$]
Let $N\geq 3$. Suppose that $\{T^{t}\}_{t\in\mathbb{R}}$ is a billiard flow on $T\mathcal{Q}$. Its associated \textbf{scattering map} $\sigma:\mathcal{V}^{-}\rightarrow\mathcal{V}^{+}$ is defined pointwise by
\begin{equation*}
\sigma(V):=\lim_{t\downarrow\tau(Z)}(\Pi_{2}\circ T^{t})(Z)    
\end{equation*}
for $Z=(0, V)\in\mathcal{M}$.
\end{defn}
While every billiard flow gives rise to a scattering map, the converse also holds true. We require the following result, the details of whose proof we defer to Section \ref{reppy} below.
\begin{prop}\label{sigmagenerate}
Every scattering map $\sigma$ generates a billiard flow on $\mathcal{M}$, i.e. for any scattering map $\sigma:\mathcal{V}^{-}\rightarrow\mathcal{V}^{+}$ there exists a billiard flow $\{T^{t}_{\sigma}\}_{t\in\mathbb{R}}$ on $\mathcal{M}$ with the property
\begin{equation*}
\sigma(V)=\lim_{t\downarrow \tau(Z)}(\Pi_{2}\circ T^{t}_{\sigma})(Z)    
\end{equation*}
for $Z=(0, V)\in\mathcal{M}$.
\end{prop}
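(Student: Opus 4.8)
The plan is to build the flow explicitly, using the fact recorded above that every orbit through a point of $\mathcal{M}$ consists of free flight punctuated by a single simultaneous collision of all $N$ spheres. Given $Z = (X, V) \in \mathcal{M}^{\circ}$, I would first extract its \emph{collision data}: the collision time $\tau = \tau(Z)$, the collision site $c\mathbf{1} = X + \tau V$, and an \emph{incoming velocity} $W \in \mathcal{V}^{-}$. Since $X \in \mathcal{Q}$ forces the components of $X$ to be ordered, the sign of $\tau$ records whether $V$ is incoming or outgoing: one checks that $\tau > 0$ forces $V \in \mathcal{V}^{-}$ and $\tau < 0$ forces $V \in \mathcal{V}^{+}$. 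Accordingly I set $W := V$ when $V \in \mathcal{V}^{-}$ and $W := \sigma^{-1}(V)$ when $V \in \mathcal{V}^{+}$, which is legitimate because an $N$-body scattering map is by definition a bijection $\mathcal{V}^{-} \to \mathcal{V}^{+}$; the outgoing velocity is then $\sigma(W) \in \mathcal{V}^{+}$.

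With this data in hand I would define the two-branch trajectory
\[
T^{t}_{\sigma}(Z) :=
\begin{cases}
(c\mathbf{1} + (t - \tau)W,\, W) & t < \tau, \\
(c\mathbf{1} + (t - \tau)\sigma(W),\, \sigma(W)) & t > \tau,
\end{cases}
\]
leaving the value at $t = \tau$ to be fixed below. Several verifications are then routine: each branch is collinear-collision data with the common site $c\mathbf{1}$, so $T^{t}_{\sigma}$ maps $\mathcal{M}$ into $\mathcal{M}$; the map $t \mapsto (\Pi_{1} \circ T^{t}_{\sigma})(Z)$ is continuous, piecewise linear, and has one-sided derivatives $W$ and $\sigma(W)$ at $\tau$ that agree with the corresponding one-sided limits of $t \mapsto (\Pi_{2} \circ T^{t}_{\sigma})(Z)$; and the latter, being a step map taking only two values, lies in $\mathrm{BV}_{\mathrm{loc}}(\mathbb{R}, \mathbb{R}^{N})$. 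Specialising to $Z = (0, V)$ with $V \in \mathcal{V}^{-}$ gives $\tau = 0$ and $W = V$, whence $\lim_{t \downarrow 0}(\Pi_{2} \circ T^{t}_{\sigma})(Z) = \sigma(V)$, which is precisely the generating property demanded by the statement.

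The core of the argument is the flow law $T^{s}_{\sigma} \circ T^{t}_{\sigma} = T^{s + t}_{\sigma}$. For this I would establish the key invariance of the collision data along an orbit: the site $c\mathbf{1}$ and the incoming velocity $W$ are constant, while the collision time satisfies $\tau(T^{t}_{\sigma}(Z)) = \tau(Z) - t$. Both identities follow by substituting each branch of the trajectory into the defining formula \eqref{coll1and2} for $\tau$ -- the difference quotient is shifted by exactly $t$ under free flight -- with the identity $\sigma^{-1}(\sigma(W)) = W$ taking care of the post-collisional branch. Granting this invariance, the composition $T^{s}_{\sigma}(T^{t}_{\sigma}(Z))$ collapses, branch by branch, to translation of the time parameter, and the group law follows; combined with $T^{0}_{\sigma} = \mathrm{id}$ this yields a flow.

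The main obstacle is the assignment of the velocity at the collision instant $t = \tau$, which must be reconciled with three competing demands: lower semi-continuity of $t \mapsto (\Pi_{2} \circ T^{t}_{\sigma})(Z)$, the group law, and membership of $T^{\tau}_{\sigma}(Z)$ in $\mathcal{M}$. The lower semi-continuity hypothesis pins this value down as the componentwise infimum of the one-sided limits $W$ and $\sigma(W)$, and the delicate point is to check that this single-instant choice is consistent with the branch-wise verification of the group law and with the appropriate interpretation of the total-collision fibre over $c\mathbf{1}$. The remaining bookkeeping concerns the degenerate stratum $\partial\mathcal{M}$ -- including the collisionless data for which $V$ is a multiple of $\mathbf{1}$ -- on which $\tau$ may be undefined; there I would set $T^{t}_{\sigma}$ by free flight or by a limiting procedure, which suffices since $\partial\mathcal{M}$ is lower-dimensional and hence negligible for the measure-theoretic results that motivate the construction.
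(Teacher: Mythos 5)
Your construction is essentially the paper's own: the paper likewise builds the flow explicitly as free flight punctuated by a single application of $\sigma$ (or of $\sigma^{-1}$ for negative times), merely organising the two branches by the sets $\mathcal{M}_{t}^{\mp}$ (collision not yet reached by time $t$ versus already reached) instead of by comparing $t$ with $\tau(Z)$, and it likewise confines the construction to $\mathcal{M}^{\circ}$ and leaves the routine verifications to the reader. Your check of the group law $T^{s}_{\sigma}\circ T^{t}_{\sigma}=T^{s+t}_{\sigma}$ via the invariance of the collision data is correct but strictly speaking extra work, since the paper's definition of a billiard flow does not demand a group law.

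The one step that would fail as stated is your assignment at the collision instant. Lower semi-continuity does not ``pin down'' the value of $t\mapsto(\Pi_{2}\circ T^{t}_{\sigma})(Z)$ at $t=\tau$ as the componentwise infimum of $W$ and $\sigma(W)$; it only forces the value to be componentwise $\leq$ that infimum. More importantly, that infimum is in general neither monotone decreasing nor monotone increasing: for $W=(3,2,1)\in\mathcal{V}^{-}$ and $\sigma^{\star}(W)=(1,2,3)\in\mathcal{V}^{+}$ the componentwise minimum is $(1,2,1)$, which lies in neither $\mathcal{V}^{-}$ nor $\mathcal{V}^{+}$. Since the fibre of $T\mathcal{Q}$ over the total-collision configuration $c\mathbf{1}\in\partial\mathcal{Q}$ is the pre-collisional cone, your choice would place $T^{\tau}_{\sigma}(Z)$ outside $T\mathcal{Q}$, so the family would not map $\mathcal{M}$ into $\mathcal{M}$. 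The paper's convention is to retain the incoming velocity $W$ at $t=\tau$ (this is the closed inequality in the definition of $\mathcal{U}_{t}^{-}$), which keeps the trajectory in $\mathcal{M}$; the residual tension between that choice and componentwise lower semi-continuity at the single instant $t=\tau$ is present in the paper as well and is immaterial to the measure-theoretic applications, but you should not present the infimum assignment as forced, nor adopt it.
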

\subsubsection{Regular Billiard Flows on $\mathcal{M}$}
Given a billiard flow on $\mathcal{M}$, its scattering map $\sigma$ may possess no degree of regularity at all. As our criterion on a flow admitting the Liouville measure $\lambda$ as an invariant measure features a first-order PDE (c.f. the statement of Theorem \ref{firstmainresult}), we require the following notion of \emph{regularity} for the billiard flows with which we work.
\begin{defn}[Regular Billiard Flow]
We say that a billiard flow $\{T^{t}_{\sigma}\}_{t\in\mathbb{R}}$ on $\mathcal{M}$ is a \textbf{regular} billiard flow on $\mathcal{M}$ if and only if its scattering map $\sigma$ is of class $C^{0}(\mathcal{V}^{-},\mathcal{V}^{+})\cap C^{1}((\mathcal{V}^{-})^{\circ}, (\mathcal{V}^{+})^{\circ})$.
\end{defn}
\subsection{Billiard Trajectories on $\mathcal{M}$}
In this Section, we define billiard trajectories on $\mathcal{M}$ which start from initial data in the interior $\mathcal{M}^{\circ}$. In loose terms, this corresponds to those initial data which model the centres of all $N$ spheres being a strictly positive distance from one another at time zero. In turn, we shall use these explicit trajectory formulae to write down representation formulae for the associated billiard flow maps in Section \ref{reppy} below. For the moment, we must split our considerations into 2 cases, namely one in which initial data $Z_{0}\in\mathcal{M}$ are \emph{pre}-collisional, and one in which the data $Z_{0}$ are \emph{post}-collisional. 

Suppose that $\sigma\in C^{0}(\mathcal{V}^{-}, \mathcal{V}^{+})\cap C^{1}((\mathcal{V}^{-})^{\circ}, (\mathcal{V}^{+})^{\circ})$ is a given $N$-body scattering map. Let $Z_{0}=(X_{0}, V_{0})\in\mathcal{M}^{\circ}$ be such that $V_{0}\in(\mathcal{V}^{-})^{\circ}$. The associated \emph{pre-collisional} billiard trajectory $Z_{\sigma}(\cdot; Z_{0}):\mathbb{R}\rightarrow\mathcal{M}$ is defined to be
\begin{equation}\label{precolltraj}
\begin{array}{c}
\displaystyle Z_{\sigma}(t; Z_{0}):= \vspace{2mm}\\
\displaystyle \left\{
\begin{array}{ll}
\displaystyle \left(I_{2N}+t\sum_{i=1}^{2N}e^{2N}_{i}\otimes e^{2N}_{i+N}\right)Z_{0} & \quad \text{if}\hspace{2mm}-\infty<t\leq \tau(Z_{0}), \vspace{2mm} \\
\displaystyle  \left(I_{2N}+(t-\tau(Z_{0}))\sum_{i=1}^{N}e_{i}^{2N}\otimes e_{i+N}^{2N}\right)\Sigma_{\sigma}(Z_{0})+t\sum_{i=1}^{N}e_{i}^{2N}\otimes e_{i+N}^{2N}Z_{0} & \quad \text{if}\hspace{2mm}\tau(Z_{0})<t<\infty,
\end{array}
\right.
\end{array}
\end{equation}
where $\Sigma_{\sigma}:\mathcal{M}\rightarrow\mathbb{R}^{2N}$ denotes the \emph{extended scattering map} associated to the $N$-body scattering $\sigma$ defined by
\begin{equation*}
\Sigma_{\sigma}(Z_{0}):=\left(
\begin{array}{c}
X_{0} \vspace{1mm}\\
\sigma(V_{0})
\end{array}
\right)
\end{equation*}
for $Z_{0}=(X_{0}, V_{0})\in\mathcal{M}$ such that $V_{0}\in\mathcal{V}^{-}$. Similarly, let $Z_{0}=(X_{0}, V_{0})\in\mathcal{M}^{\circ}$ be such that $V_{0}\in (\mathcal{V}^{+})^{\circ}$. The associated \emph{post-collisional} billiard trajectory $Z_{\sigma}(\cdot; Z_{0}):\mathbb{R}\rightarrow\mathcal{M}$ is defined by
\begin{equation}\label{postcolltraj}
\begin{array}{c}
\displaystyle Z_{\sigma}(t; Z_{0}):= \vspace{2mm}\\
\displaystyle \left\{
\begin{array}{ll}
\displaystyle \left(I_{2N}+(t-\tau(Z_{0}))\sum_{i=1}^{N}e_{i}^{2N}\otimes e_{i+N}^{2N}\right)\Sigma_{\sigma^{-1}}(Z_{0})+t\sum_{i=1}^{N}e_{i}^{2N}\otimes e_{i+N}^{2N}Z_{0} & \quad \text{if}\hspace{2mm}-\infty<t\leq \tau(Z_{0}), \vspace{2mm} \\
\displaystyle \left(I_{2N}+t\sum_{i=1}^{2N}e^{2N}_{i}\otimes e^{2N}_{i+N}\right)Z_{0} & \quad \text{if}\hspace{2mm}\tau(Z_{0})<t<\infty,
\end{array}
\right.
\end{array}
\end{equation}
where $\Sigma_{\sigma^{-1}}:\mathcal{M}\rightarrow\mathbb{R}^{2N}$ denotes the extended scattering map associated to the inverse $\sigma^{-1}$, given pointwise by
\begin{equation*}
\Sigma_{\sigma}(Z_{0}):=\left(
\begin{array}{c}
X_{0} \vspace{1mm}\\
\sigma^{-1}(V_{0})
\end{array}
\right)    
\end{equation*}
for $Z_{0}=(X_{0}, V_{0})\in\mathcal{M}$ such that $V_{0}\in\mathcal{V}^{+}$. We would like to find a billiard flow $\{T^{t}\}_{t\in\mathbb{R}}$ on $\mathcal{M}$ whose associated flow maps $T^{t}:\mathcal{M}\rightarrow\mathcal{M}$ admit the property that
\begin{equation*}
T^{t}(Z_{0})=Z_{\sigma}(t; Z_{0})    
\end{equation*}
for all $Z_{0}\in\mathcal{M}^{\circ}$ and each \emph{fixed} $t\in\mathbb{R}$. The reader will note that the trajectory formulae \eqref{precolltraj} and \eqref{postcolltraj} do not immediately suggest the form of the flow maps $T^{t}_{\sigma}$ to which they are related. We shall consider this below.
\subsection{Representation Formulae for Flow Maps on $\mathcal{M}^{\circ}$}\label{reppy}
In this Section, we offer a sketch of the details involved in a proof of Proposition \ref{sigmagenerate} above, namely that every $N$-body scattering map $\sigma$ generates an associated billiard flow $\{T^{t}_{\sigma}\}_{t\in\mathbb{R}}$ on $\mathcal{M}$. However, as it is all we require, we only consider here the construction of the billiard flow maps on the interior of $\mathcal{M}$ in what follows. 

Owing to the role that the inverse $\sigma^{-1}$ of a given $N$-body scattering map $\sigma$ plays in definition of billiard trajectories, moving forward we must distinguish between the cases $t>0$ and $t<0$, with the case $t=0$ being trivial in that $T^{0}_{\sigma}:=\mathrm{id}_{\mathcal{M}}$ for any choice of $N$-body scattering map $\sigma$. We begin by defining the auxiliary subset $\mathcal{U}_{t}^{-}\subset\Omega$ by
\begin{equation*}
\mathcal{U}_{t}^{-}:=\left\{
\begin{array}{ll}
\displaystyle \bigsqcup_{X\in\mathcal{Q}^{\circ}}\left\{U\in\mathbb{R}^{2}\setminus\Delta\,:\,u_{1}\leq u_{2}+\frac{x_{2}-x_{1}}{t}\right\} & \quad \text{if}\hspace{2mm}t>0, \vspace{2mm}\\
\mathcal{Q}^{\circ}\times\left\{U\in\mathbb{R}^{2}\,:\,u_{1}< u_{2}\right\} & \quad \text{if}\hspace{2mm}t=0, \vspace{2mm}\\
\displaystyle \bigsqcup_{X\in\mathcal{Q}^{\circ}}\left\{
U\in\mathbb{R}^{2}\setminus\Delta\,:\,u_{1}> u_{2}+\frac{x_{2}-x_{1}}{t}\right\} & \quad \text{if}\hspace{2mm}t<0,
\end{array}
\right.
\end{equation*}
and its counterpart $\mathcal{U}_{t}^{+}\subset\Omega$ by 
\begin{equation*}
\mathcal{U}_{t}^{+}:=\left\{
\begin{array}{ll}
\displaystyle \bigsqcup_{X\in\mathcal{Q}^{\circ}}\left\{U\in\mathbb{R}^{2}\setminus\Delta\,:\,u_{1}> u_{2}+\frac{x_{2}-x_{1}}{t}\right\} & \quad \text{if}\hspace{2mm}t>0, \vspace{2mm}\\
\mathcal{Q}^{\circ}\times\left\{U\in\mathbb{R}^{2}\,:\,u_{1}>u_{2}\right\} & \quad \text{if}\hspace{2mm}t=0, \vspace{2mm}\\
\displaystyle \bigsqcup_{X\in\mathcal{Q}^{\circ}}\left\{
U\in\mathbb{R}^{2}\setminus\Delta\,:\,u_{1}\leq u_{2}+\frac{x_{2}-x_{1}}{t}\right\} & \quad \text{if}\hspace{2mm}t<0.
\end{array}
\right.
\end{equation*}
We note that $\mathcal{U}_{t}^{-}\cup\mathcal{U}_{t}^{+}=\Omega$. Using the subsets $\mathcal{U}_{t}^{-}$ and $\mathcal{U}_{t}^{+}$ of the chart map domain $\Omega=\mathcal{Q}^{\circ}\times(\mathbb{R}^{2}\setminus\Delta)$, we now define the important sets $\mathcal{M}_{t}^{-}$ and $\mathcal{M}_{t}^{+}$ which will be of focus in Sections \ref{emmteeminus} and \ref{emmplus}, respectively.
\begin{defn}
We define $\mathcal{M}_{t}^{-}\subset\mathcal{M}$ and $\mathcal{M}_{t}^{+}\subset\mathcal{M}$ to be
\begin{equation*}
\mathcal{M}_{t}^{\pm}:=\Psi(\mathcal{U}_{t}^{\pm})    
\end{equation*}
for $t\in\mathbb{R}$.
\end{defn}
In particular, it holds that $\Psi(\mathcal{U}_{t}^{-}\cup\mathcal{U}_{t}^{+})=\mathcal{M}^{\circ}$ for all $t\in\mathbb{R}$. With these definitions in hand, we may now posit Ans\"{a}tze for the form of the billiard flow operators $T^{t}_{\sigma}$ associated to the trajectories \eqref{precolltraj} and \eqref{postcolltraj} above. The reader may verify that the maps $T^{t}_{\sigma}:\mathcal{M}^{\circ}\rightarrow\mathcal{M}$ defined by
\begin{equation*}
\begin{array}{c}
\displaystyle T^{t}_{\sigma}(Z_{0}):=\vspace{2mm}\\
\displaystyle \left\{
\begin{array}{ll}
\displaystyle \left(
I_{2N}+t\sum_{i=1}^{2N}e^{2N}_{i}\otimes e^{2N}_{i+N}
\right)Z_{0} & \quad \text{for}\hspace{2mm}Z_{0}\in\mathcal{M}_{t}^{-},\vspace{2mm}\\
\displaystyle  \left(I_{2N}+(t-\tau(Z_{0}))\sum_{i=1}^{N}e_{i}^{2N}\otimes e_{i+N}^{2N}\right)\Sigma_{\sigma}(Z_{0})+t\sum_{i=1}^{N}e_{i}^{2N}\otimes e_{i+N}^{2N}Z_{0} & \quad \text{for}\hspace{2mm}Z_{0}\in\mathcal{M}_{t}^{+} 
\end{array}
\right.
\end{array}
\end{equation*}
for fixed $t>0$, and by
\begin{equation*}
\begin{array}{c}
\displaystyle T^{t}_{\sigma}(Z_{0}):=\vspace{2mm}\\
\displaystyle \left\{
\begin{array}{ll}
\displaystyle \left(
I_{2N}+t\sum_{i=1}^{2N}e^{2N}_{i}\otimes e^{2N}_{i+N}
\right)Z_{0} & \quad \text{for}\hspace{2mm}Z_{0}\in\mathcal{M}_{t}^{-}, \vspace{2mm}\\
\displaystyle \left(I_{2N}+(t-\tau(Z_{0}))\sum_{i=1}^{N}e_{i}^{2N}\otimes e_{i+N}^{2N}\right)\Sigma_{\sigma^{-1}}(Z_{0})+t\sum_{i=1}^{N}e_{i}^{2N}\otimes e_{i+N}^{2N}Z_{0} & \quad \text{for}\hspace{2mm}Z_{0}\in\mathcal{M}_{t}^{+}
\end{array}
\right.
\end{array}
\end{equation*}
for fixed $t<0$ admit the property that
\begin{equation*}
T^{t}_{\sigma}(Z_{0})=Z_{\sigma}(t; Z_{0})    
\end{equation*}
for any $Z_{0}\in\mathcal{M}^{\circ}$ and all $t\in\mathbb{R}$. In particular, they admit the desired property that
\begin{equation*}
\sigma(V_{0})=\lim_{t\downarrow\tau(Z_{0})}(\Pi_{2}\circ T^{t}_{\sigma})(Z_{0})    
\end{equation*}
for $Z_{0}=(0, V_{0})$ with $V_{0}\in(\mathcal{V}^{-})^{\circ}$, as well as
\begin{equation*}
\sigma^{-1}(V_{0})=\lim_{t\uparrow\tau(Z_{0})}(\Pi_{2}\circ T^{t}_{\sigma})(Z_{0})  
\end{equation*}
for $Z_{0}=(0, V_{0})$ with $V_{0}\in(\mathcal{V}^{+})^{\circ}$. As the precise properties of the $N$-body scattering $\sigma$ have not yet been prescribed, other than the prescription that $\sigma$ map the polytope $\mathcal{V}^{-}$ to the polytope $\mathcal{V}^{+}$, we cannot say anything about the image sets $T^{t}_{\sigma}(\mathcal{M}_{t}^{+})$ for $t\in\mathbb{R}\setminus\{0\}$. As such, we make the following assumptions on the properties of any $N$-body scattering map throughout this article.
\begin{ass}[Admissible $N$-body Scattering Map $\sigma$]\label{Ass}
We say that an $N$-body scattering map $\sigma:\mathcal{V}^{-}\rightarrow\mathcal{V}^{+}$ is \textbf{admissible} if and only if it satisfies the following two conditions:
\begin{itemize}
    \item $\sigma^{-1}(V)=-\sigma(-V)$ for all $V\in\mathcal{V}^{+}$;
    \item $T^{t}_{\sigma}(\mathcal{M}^{-}_{t})\cup T^{t}_{\sigma}(\mathcal{M}^{+}_{t})=\mathcal{M}^{\circ}$ for all $t\in\mathbb{R}$.
\end{itemize}
\end{ass}
The first of the above assumptions, which expresses the inverse scattering map $\sigma$ in terms of the scattering map itself, shall be employed in the proof of uniqueness of linear $N$-body scattering that conserves both linear momentum and energy in Section \ref{uniquelinny} below. It is equivalent to the assumption that the billiard flow $\{T^{t}_{\sigma}\}_{t\in\mathbb{R}}$ generated by $\sigma$ admits no \emph{arrow of time}: see Landau and Lifschitz \cite{landau2013statistical}. The second of these assumptions ensures that the billiard flow maps do not map regions of $\mathcal{M}$ of positive measure to those of null measure (which can occur if $\sigma$ does not conserve the energy of its argument, for instance). In the sequel, we shall refer to an \emph{admissible} $N$-body scattering map simply as an $N$-body scattering map.
\begin{rem}
In what follows, we subsequently drop the `0' subscript from initial data in $\mathcal{M}$, thereby considering flow maps $T^{t}_{\sigma}$ as a function of $Z=(X, V)\in\mathcal{M}$ rather than of $Z_{0}=(X_{0}, V_{0})$.    
\end{rem}
\section{Analysis over the Region $\mathcal{M}_{t}^{-}$}\label{emmteeminus}
In this Section, we consider the properties of billiard flow maps $T^{t}_{\sigma}$ only over the region $\mathcal{M}_{t}^{-}$ as defined in Section \ref{reppy} above. We consider properties of flow maps over the complement $\mathcal{M}_{t}^{+}$ in Section \ref{emmplus} below. Our aim here is to find symmetry properties of any measure $\mu\in\mathsf{M}(\mathcal{M})$ with $\mu\ll\mathscr{H}$ which satisfies the identity
\begin{equation}\label{simplebrackets}
\langle T_{\sigma}^{t}\#\mu, \Phi\rangle=\langle\mu, \Phi\rangle    
\end{equation}
for all $\Phi\in C^{0}_{c}(\mathbb{R}^{2N}, \mathbb{R})$ with the property that the support of $\Phi$ satisfies the condition
\begin{equation}\label{suup}
\mathscr{H}(\mathrm{supp}(\Phi)\cap\mathcal{M}_{t}^{-})> 0.    
\end{equation}
Indeed, we invoke the following basic lemma which motivates this aim.
\begin{lem}
Suppose $N\geq 3$. It holds that for any bounded measurable subset $E\subset\mathcal{M}$ there exists a sequence $\{\Phi_{j}\}_{j=1}^{\infty}\subset C^{0}_{c}(\mathbb{R}^{2N}, \mathbb{R})$ with the property that
\begin{equation*}
\mu(E)=\lim_{j\rightarrow\infty}\int_{\mathcal{M}}\Phi_{j}\,d\mu    
\end{equation*}
\end{lem}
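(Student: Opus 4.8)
The plan is to reduce the statement to the approximation of the indicator function $\chi_{E}$ of $E$ by compactly-supported continuous functions in the sense of $L^{1}(\mu)$. Writing $\mu(E)=\int_{\mathcal{M}}\chi_{E}\,d\mu$, it suffices to produce $\{\Phi_{j}\}_{j=1}^{\infty}\subset C^{0}_{c}(\mathbb{R}^{2N},\mathbb{R})$ with $\int_{\mathcal{M}}\Phi_{j}\,d\mu\rightarrow\int_{\mathcal{M}}\chi_{E}\,d\mu$. First I would discard the boundary: since $\partial\mathcal{M}$ has Hausdorff dimension strictly below $N+2$ we have $\mathscr{H}(\partial\mathcal{M})=0$, and as $\mu\ll\mathscr{H}$ it follows that $\mu(E)=\mu(E\cap\mathcal{M}^{\circ})$, so we may assume $E\subset\mathcal{M}^{\circ}$ from the outset.

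The construction I would use is the classical one built from regularity of the measure together with Urysohn's lemma. By inner regularity, choose compact sets $K_{j}\subset E$ with $\mu(K_{j})\rightarrow\mu(E)$; by outer regularity, choose bounded open sets $U_{j}\supset K_{j}$, shrinking onto $K_{j}$, with $\mu(U_{j}\setminus K_{j})<1/j$ whenever $\mu(E)<\infty$ (and with $U_{j}$ merely bounded otherwise). Urysohn's lemma then furnishes $\Phi_{j}\in C^{0}_{c}(\mathbb{R}^{2N},\mathbb{R})$ with $\chi_{K_{j}}\leq\Phi_{j}\leq\chi_{U_{j}}$ and $0\leq\Phi_{j}\leq 1$; these are compactly supported because $E$, and hence each $U_{j}$, is bounded. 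The squeeze $\mu(K_{j})\leq\int_{\mathcal{M}}\Phi_{j}\,d\mu\leq\mu(U_{j})$ then forces $\int_{\mathcal{M}}\Phi_{j}\,d\mu\rightarrow\mu(E)$, both in the case $\mu(E)<\infty$ (via the outer-regularity estimate) and in the case $\mu(E)=\infty$ (since the lower bound $\mu(K_{j})$ already diverges).

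The point requiring the most care is the regularity of $\mu$, since $\mu\ll\mathscr{H}$ does not a priori make $\mu$ locally finite: its Radon--Nikodym density may fail to be integrable as one approaches $\partial\mathcal{M}$ (as does the Liouville density $L$, which is unbounded there), so $\mu(E)$ could be infinite and the inner/outer regularity invoked above is not automatic. I would establish it by passing through the chart $\Psi:\Omega\rightarrow\mathcal{M}^{\circ}$ of Section \ref{chartinfo}. Using the area-formula representation recorded above, for Borel $A\subset\mathcal{M}^{\circ}$ one has $\mu(A)=\int_{\Psi^{-1}(A)}(m\circ\Psi)\,\omega\,d\mathscr{L}$, where $m=d\mu/d\mathscr{H}$ and $\omega$ is the chart density; this exhibits the pullback of $\mu$ as a measure on $\Omega\subset\mathbb{R}^{N+2}$ absolutely continuous with respect to Lebesgue measure, hence Borel regular and inner regular by compact sets on the Borel subsets of $\Omega$. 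Exhausting $\Psi^{-1}(E)$ by compact subsets of $\Omega$ (which may accumulate on $\partial\Omega$, corresponding to approach of $\partial\mathcal{M}$) transports, via the homeomorphism $\Psi$, to the compact sets $K_{j}\subset E$ required above, and Tietze extension promotes the resulting continuous functions on $\mathcal{M}^{\circ}$ to elements of $C^{0}_{c}(\mathbb{R}^{2N},\mathbb{R})$.

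The main obstacle is thus purely this boundary and integrability bookkeeping: guaranteeing that a bounded set $E$ accumulating on $\partial\mathcal{M}$ still admits a compact exhaustion with $\mu(K_{j})\rightarrow\mu(E)$, rather than any difficulty in the approximation itself. Once the regularity of $\mu$ is secured through the chart pullback, the Urysohn construction and the monotone squeeze close the argument uniformly in whether $\mu(E)$ is finite or infinite.
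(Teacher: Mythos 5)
Your argument is correct and takes essentially the same route as the paper, which disposes of this lemma in a single line by citing Urysohn's Lemma; your inner/outer regularity scaffolding and the chart pullback through $\Psi$ simply supply the details that the paper leaves implicit. The only point to watch is that the outer-regularity step (needed when $\mu(E)<\infty$) tacitly requires $d\mu/d\mathscr{H}$ to be locally integrable on $\mathcal{M}^{\circ}$, which holds for the measures the paper actually works with but is not forced by $\mu\ll\mathscr{H}$ alone.
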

\begin{proof}
This is a consequence of Urysohn's Lemma: see Munkres (\cite{munkres2018topology}, Chapter 4).
\end{proof}
We shall show in the sequel that identity \eqref{simplebrackets} does \emph{not} hold if the measure $\mu$ is taken to be the Hausdorff measure $\mathscr{H}$. Rather, we identify a function $L$ derived from the `shape' of the manifold $\mathcal{M}$ such that $\mu=L\mathscr{H}$ satisfies \eqref{simplebrackets}.
\subsection{Integration over Codimension $(N-2)$ Surfaces in $\mathbb{R}^{2N}$}
We state without proof the following elementary result on the evaluation of codimension $(N-2)$ surface integrals in $\mathbb{R}^{2N}$ by way of parametrisation maps.
\begin{prop}
Suppose $N\geq 3$, and that $\mu\ll\mathscr{H}$ is a given measure on $\mathcal{M}$ with density $M$. Suppose further that $\Psi:\Omega\rightarrow\mathcal{M}^{\circ}$ denotes any diffeomorphism of class $C^{1}$. For any $\Phi\in C^{0}_{c}(\mathbb{R}^{2N}, \mathbb{R})$ with the property
\begin{equation*}
\mathscr{H}(\mathrm{supp}(\Phi)\cap\mathcal{M})>0,    
\end{equation*}
it holds that
\begin{equation*}
\int_{\mathcal{M}}\Phi\,d\mu=\int_{\Omega}(\Phi\circ\Psi)(M\circ\Psi)\omega\,d\mathscr{L},    
\end{equation*}
where the surface density $\omega\in C^{0}(\Omega, [0, \infty))$ is given by
\begin{equation*}
\omega(X, U):=\sqrt{D\Psi(X, U)^{T}D\Psi(X, U)}    
\end{equation*}
for $(X, U)\in\Omega$, and where $\mathscr{L}$ denotes the restriction of the Lebesgue measure on $\mathbb{R}^{N+2}$ to the Lebesgue-measurable subset $\Omega\subset\mathbb{R}^{N+2}$.
\end{prop}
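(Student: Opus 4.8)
The plan is to recognise the claimed identity as nothing more than the Radon--Nikodym theorem followed by the classical area (integration-over-a-surface) formula applied to the parametrisation $\Psi$, together with the observation that the boundary $\partial\mathcal{M}$ carries no $\mathscr{H}$-mass. Concretely, I would prove the three reductions in sequence: first pass from $d\mu$ to $M\,d\mathscr{H}$; then restrict the integral from $\mathcal{M}$ to the open piece $\mathcal{M}^{\circ}=\Psi(\Omega)$; and finally pull the surface integral back to $\Omega$ by the area formula, identifying the Jacobian factor with $\omega$.

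For the first step, since $\mu\ll\mathscr{H}$ with $d\mu/d\mathscr{H}=M$, the Radon--Nikodym theorem gives $\int_{\mathcal{M}}\Phi\,d\mu=\int_{\mathcal{M}}\Phi M\,d\mathscr{H}$ for the continuous, compactly supported $\Phi$ in the statement. For the second step I would use the structural description of the boundary recalled earlier in the paper: $\mathcal{M}$ is a manifold with corners of dimension $N+2$, so $\partial\mathcal{M}$ is a finite union of pieces of dimension at most $N+1$, each contained in a $C^{1}$ submanifold of $\mathbb{R}^{2N}$ cut out by the additional relations $x_{i}=x_{j}$, $v_{i}=v_{j}$. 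As $\mathscr{H}$ denotes the $(N+2)$-dimensional Hausdorff measure, every such lower-dimensional piece is $\mathscr{H}$-null, whence $\mathscr{H}(\partial\mathcal{M})=0$ and
\[
\int_{\mathcal{M}}\Phi M\,d\mathscr{H}=\int_{\mathcal{M}^{\circ}}\Phi M\,d\mathscr{H}.
\]
For the third step, since $\Psi\in C^{1}(\Omega,\mathbb{R}^{2N})$ is injective (being a diffeomorphism onto $\mathcal{M}^{\circ}$) and $\Omega\subset\mathbb{R}^{N+2}$ is open, the area formula (see \cite{evans2018measure}, Chapter 3) yields, for every nonnegative Borel $g$ on $\mathcal{M}^{\circ}$,
\[
\int_{\mathcal{M}^{\circ}}g\,d\mathscr{H}=\int_{\Omega}(g\circ\Psi)\,\omega\,d\mathscr{L},\qquad \omega=\sqrt{\det\!\big(D\Psi^{T}D\Psi\big)}.
\]
Taking $g=\Phi M$ and noting $\omega>0$ on $\Omega$ (as $\Psi$ is an immersion) delivers the stated identity.

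I expect the main technical care to be concentrated in two bookkeeping points rather than any deep obstruction. The first is the rigorous verification that $\mathscr{H}(\partial\mathcal{M})=0$, which relies on the manifold-with-corners description of $\mathcal{M}$ and the dimension count above; once $\partial\mathcal{M}$ is exhibited as a countable union of graphs of dimension $N+1$, its $\mathscr{H}^{N+2}$-nullity is immediate. The second, and the more delicate of the two, is the measurability and integrability bookkeeping needed to legitimately apply the area formula to the \emph{signed} integrand $\Phi M$: I would first apply the formula to $|\Phi M|$ (finite or infinite, both sides agreeing in $[0,\infty]$), using that $\Phi$ is bounded with compact support to control the $\Phi$-factor, and then split $\Phi M$ into positive and negative parts to remove any sign ambiguity, implicitly under the standing assumption that the resulting integral is well defined. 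Apart from these standard measure-theoretic verifications, no genuine difficulty arises, which is consistent with the result being elementary.
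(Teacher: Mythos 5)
Your proposal is correct and follows essentially the same route as the paper, which simply cites the area formula for parametrised submanifolds (Duistermaat--Kolk, Evans--Gariepy) without spelling out the Radon--Nikodym reduction, the $\mathscr{H}$-nullity of $\partial\mathcal{M}$, or the positive/negative-part splitting that you supply. Your write-up fills in exactly the bookkeeping the paper delegates to its references, and correctly includes the determinant in the definition of $\omega$ that the paper's statement omits as a typographical slip.
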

\begin{proof}
See Duistermaat and Kolk (\cite{duistermaat2004multidimensional}, Section 7.3) and Evans and Gariepy (\cite{evans2018measure}, Section 3.3.4.D).   
\end{proof}
We recall from above that for any momentum- and energy-conserving $N$-body scattering map $\sigma$, the billiard flow $\{T^{t}_{\sigma}\}_{t\in\mathbb{R}}$ it generates has flow maps of the shape
\begin{equation*}
T^{t}_{\sigma}(Z)=\left\{
\begin{array}{ll}
\displaystyle \left(I_{2N}+t\sum_{i=1}^{2N}e_{i}^{N}\otimes e_{i+N}^{2N}\right)Z & \quad \text{if}\hspace{2mm}Z\in\mathcal{M}_{t}^{-}, \vspace{2mm}\\
\displaystyle \left(I_{2N}+(t-\tau(Z))\sum_{i=1}^{N}e_{i}^{2N}\otimes e_{i+N}^{2N}\right)\Sigma_{\sigma}(Z)+t\sum_{i=1}^{N}e_{i}^{2N}\otimes e_{i+N}^{2N}Z & \quad \text{if}\hspace{2mm}Z\in\mathcal{M}_{t}^{+}
\end{array}
\right.
\end{equation*}
for $Z\in\mathcal{M}$ and any $t\in\mathbb{R}$. As such, the flow map $T^{t}_{\sigma}$ acts as a \emph{shear} operator on $\mathcal{M}_{t}^{-}$ and its action is independent of the choice of $N$-body scattering map $\sigma$. For any $\Phi\in C^{0}_{c}(\mathbb{R}^{2N}, \mathbb{R})$ with the property that
\begin{equation*}
\mathscr{H}(\mathrm{supp}(\Phi)\cap\mathcal{M}_{t}^{-})>0,    
\end{equation*}
we find that 
\begin{equation}\label{refme}
\langle T^{t}_{\sigma}\#\mu, \Phi \rangle=\int_{\mathcal{M}}\Phi\,d(T^{t}_{\sigma}\#\mu)=\int_{T^{-t}_{\sigma}\circ\Psi(\mathcal{U}_{t}^{-})}(\Phi\circ T^{t}_{\sigma}\circ\Psi)(M\circ\Psi)\omega \,d\mathscr{L}.  
\end{equation}
In order to be able to process the information in the integral \eqref{refme} above, for any $N\geq 3$, we require the following Lemma on the explicit form of the surface density $\omega:\Omega\rightarrow [0, \infty)$ associated to the chart map $\Psi$ defined above.
\begin{lem}
For any $N\geq 3$, it holds that the surface density $\omega$ associated to the chart map $\Psi$ is given by
\begin{equation}\label{surfacedensity}
\omega(X, U)=\frac{((u_{1}-u_{2})^{2}+(x_{1}-x_{2})^{2})^{\frac{N-2}{2}}}{(x_{1}-x_{2})^{N-1}}\left(\sum_{i>j}(x_{i}-x_{j})^{2}\right)^{\frac{1}{2}}
\end{equation}
for $(X, U)\in\Omega$.
\end{lem}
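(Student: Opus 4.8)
The plan is to compute the Gram determinant $\det\!\big(D\Psi^{T}D\Psi\big)$ directly and to exploit the block structure that $\Psi$ inherits from its definition. Writing $\zeta=(X,U)$ with $X\in\mathcal{Q}^{\circ}$ and $U\in\mathbb{R}^{2}\setminus\Delta$, the first $N$ components of $\Psi$ are just $X$, so the Jacobian takes the form
\begin{equation*}
D\Psi=\begin{pmatrix} I_{N} & 0_{N\times 2}\\ J_{x} & J_{u}\end{pmatrix},
\end{equation*}
where $J_{x}=\partial\psi/\partial X\in\mathbb{R}^{N\times N}$ and $J_{u}=\partial\psi/\partial U\in\mathbb{R}^{N\times 2}$. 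The crucial structural observation I would record first is that on $\mathcal{M}$ the velocities depend affinely on the positions: one has $\psi_{i}=u_{2}+\alpha_{i}(u_{1}-u_{2})$ with $\alpha_{i}=(x_{i}-x_{2})/(x_{1}-x_{2})$, which is exactly the affine-in-position interpolant through $(x_{1},u_{1})$ and $(x_{2},u_{2})$. Consequently $D\Psi^{T}D\Psi=\mathrm{diag}(I_{N},0_{2})+[J_{x}\,|\,J_{u}]^{T}[J_{x}\,|\,J_{u}]$, and a Schur complement in the $(U,U)$ block yields
\begin{equation*}
\det\!\big(D\Psi^{T}D\Psi\big)=\det\!\big(J_{u}^{T}J_{u}\big)\cdot\det\!\big(I_{N}+J_{x}^{T}P^{\perp}J_{x}\big),
\end{equation*}
where $P^{\perp}=I_{N}-J_{u}(J_{u}^{T}J_{u})^{-1}J_{u}^{T}$ is the orthogonal projection onto the complement of $\mathrm{col}(J_{u})$ (invertibility of $J_{u}^{T}J_{u}$ holding on $\mathcal{Q}^{\circ}$, where $\mathbf{1}$ and $X$ are linearly independent).

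The first factor is elementary: computing the two columns of $J_{u}$ shows that $\mathrm{col}(J_{u})=\mathrm{span}\{\mathbf{1},X\}=:W$, and relating the area these columns span to that spanned by $\mathbf{1}$ and $X$ gives $\det(J_{u}^{T}J_{u})=(x_{1}-x_{2})^{-2}\big(N\sum_{i}x_{i}^{2}-(\sum_{i}x_{i})^{2}\big)$. Invoking the identity $N\sum_{i}x_{i}^{2}-(\sum_{i}x_{i})^{2}=\sum_{i<j}(x_{i}-x_{j})^{2}$ already produces the factor $\big(\sum_{i>j}(x_{i}-x_{j})^{2}\big)^{1/2}$ together with one power of $(x_{1}-x_{2})^{-1}$ in the final answer.

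The heart of the proof is the second factor, and this is where I expect the main obstacle to lie. The key is that the rows of $J_{x}$ share a common scalar prefactor: writing $\mu=(u_{1}-u_{2})/(x_{1}-x_{2})$ one checks $J_{x}=\mu\widetilde{R}$, where $(\widetilde{R}v)_{k}=v_{k}-\ell_{v}(x_{k})$ and $\ell_{v}$ is the affine interpolant of $v$ matching components $1$ and $2$. The map $\widetilde{R}$ is idempotent — it is the oblique projection of $\mathbb{R}^{N}$ onto $\{w:w_{1}=w_{2}=0\}$ along $W$ — and since $\widetilde{R}v-v=-\ell_{v}(X)\in W$ for every $v$, one obtains the identity $P^{\perp}\widetilde{R}=P^{\perp}$, and hence $\widetilde{R}^{T}P^{\perp}\widetilde{R}=P^{\perp}$ after transposing. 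Therefore $J_{x}^{T}P^{\perp}J_{x}=\mu^{2}P^{\perp}$, so that $\det(I_{N}+J_{x}^{T}P^{\perp}J_{x})=\det(I_{N}+\mu^{2}P^{\perp})=(1+\mu^{2})^{N-2}$, the exponent $N-2$ being the rank of the projection $P^{\perp}$ onto $W^{\perp}$.

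Substituting $1+\mu^{2}=\big((u_{1}-u_{2})^{2}+(x_{1}-x_{2})^{2}\big)/(x_{1}-x_{2})^{2}$ and taking the nonnegative square root then assembles the two factors into the claimed formula \eqref{surfacedensity}, with $(x_{1}-x_{2})^{N-1}$ read as $|x_{1}-x_{2}|^{N-1}$ since $\omega\geq 0$. The only genuinely delicate point is identifying and proving the projection identity $\widetilde{R}^{T}P^{\perp}\widetilde{R}=P^{\perp}$; everything else is bookkeeping once the affine-interpolation structure of $\psi$ has been made explicit. An alternative to the Schur-complement route would be a direct Cauchy–Binet expansion of $\det(D\Psi^{T}D\Psi)$ over the $(N+2)\times(N+2)$ minors of $D\Psi$, but the combinatorics of the surviving minors is considerably messier than the projection argument above.
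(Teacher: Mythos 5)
Your proof is correct, and it takes a genuinely different route from the paper's. The paper computes the full $(N+2)\times(N+2)$ Gram matrix $D\Psi^{T}D\Psi$ entry by entry (its blocks $\alpha,\beta,\gamma,\delta$) and then reduces it to lower-triangular form by an explicit product of unit-determinant elementary matrices $\mathsf{E}(i,j,a)$, reading the determinant off the diagonal. You instead keep the block structure $D\Psi=\bigl(\begin{smallmatrix}I_{N} & 0\\ J_{x} & J_{u}\end{smallmatrix}\bigr)$, take a Schur complement in the $(U,U)$ block, and evaluate the two resulting factors structurally: $\det(J_{u}^{T}J_{u})$ via the change of basis $\mathrm{col}(J_{u})=\mathrm{span}\{\mathbf{1},X\}$ together with the Gram identity $N\sum_{i}x_{i}^{2}-(\sum_{i}x_{i})^{2}=\sum_{i<j}(x_{i}-x_{j})^{2}$, and the second factor via the identity $J_{x}^{T}P^{\perp}J_{x}=\mu^{2}P^{\perp}$, which follows from $\widetilde{R}v-v\in\mathrm{span}\{\mathbf{1},X\}=\ker P^{\perp}$. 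I checked the key steps: $J_{x}=\mu\widetilde{R}$ matches the paper's blocks $A$, $B$ and the middle diagonal block; $c_{1}+c_{2}=\mathbf{1}$ and $x_{1}c_{1}+x_{2}c_{2}=X$ for the columns of $J_{u}$; and $\det(I_{N}+\mu^{2}P^{\perp})=(1+\mu^{2})^{N-2}$ since $P^{\perp}$ has rank $N-2$. Your route buys a conceptual explanation of both pieces of the answer --- the exponent $N-2$ is the rank of the projection onto $\mathrm{span}\{\mathbf{1},X\}^{\perp}$, and the factor $\sum_{i<j}(x_{i}-x_{j})^{2}$ is the Gram determinant of $\mathbf{1}$ and $X$ --- at the cost of needing the one non-obvious projection identity; the paper's route is mechanical but requires no such observation. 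Your remark that the denominator should be read as $|x_{1}-x_{2}|^{N-1}$ is also well taken: on $\mathcal{Q}^{\circ}$ one has $x_{1}<x_{2}$, so for even $N$ the paper's formula as literally written would be negative, and the absolute value is the intended reading since $\omega\geq 0$.
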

\begin{proof}
To produce the general formula \eqref{surfacedensity} above, we evaluate the determinant of $D\Psi^{T} D\Psi$ for any $N\geq 3$ in a systematic manner. Indeed, employing elementary row and column operation matrices, we show that $D\Psi^{T}D\Psi$ is equivalent to a lower-triangular matrix-valued map, following which the evaluation of its determinant is trivial. 

To begin, it can be shown that the matrix-valued map $D\Psi:\Omega\rightarrow\mathbb{R}^{2N\times(N+2)}$ is given block-wise by
\begin{equation*}
D\Psi(X, U)=\left(
\begin{array}{ccc}
 & I_{N+2} & \vspace{2mm} \\ \hline \vspace{-3mm} \\
 A(X, U) & \displaystyle\left(\frac{u_{1}-u_{2}}{x_{1}-x_{2}}\right)I_{N-2} & B(X, U)
\end{array}
\right),
\end{equation*}
where $A:\Omega\rightarrow\mathbb{R}^{(N-2)\times 2}$ is given by
\begin{equation*}
A_{j, 1}(X, U):=\frac{(u_{1}-u_{2})(x_{2}-x_{j+2})}{(x_{1}-x_{2})^{2}}
\end{equation*}
and
\begin{equation*}
A_{j, 2}(X, U):=-\frac{(u_{1}-u_{2})(x_{1}-x_{j+2})}{(x_{1}-x_{2})^{2}}
\end{equation*}
for $j\in\{1, ..., N-2\}$, while $B:\Omega\rightarrow\mathbb{R}^{(N-2)\times 2}$ is given by
\begin{equation*}
B_{j, 1}:=-\frac{x_{2}-x_{j+2}}{x_{1}-x_{2}}
\end{equation*}
and
\begin{equation*}
B_{j, 2}:=\frac{x_{1}-x_{j+2}}{x_{1}-x_{2}}
\end{equation*}
for $j\in\{1, ..., N-2\}$. In turn, it follows that the product map $D\Psi^{T} D\Psi:\Omega\rightarrow\mathbb{R}^{(N+2)\times (N+2)}$ is given block-wise by 
\begin{equation}\label{toughmatrix}
D\Psi(X, U)^{T}D\Psi(X, U)=
\left(
\begin{array}{ccc}
\alpha(X, U) & \gamma(X, U)^{T} & \delta(X, U) \vspace{2mm}\\
\gamma(X, U) & \displaystyle \left(1+\frac{(u_{1}-u_{2})^{2}}{(x_{1}-x_{2})^{2}}\right)I_{N-2} & \tau(X, U)\gamma(X, U) \vspace{2mm} \\
\delta(X, U) & \tau(X, U)\gamma(X, U)^{T} & \beta(X, U)
\end{array}
\right),
\end{equation}
where the block map $\alpha:\Omega\rightarrow\mathbb{R}^{2\times 2}$ is given by
\begin{equation*}
\alpha_{1, 1}(X, U):=1+\frac{(u_{1}-u_{2})^{2}}{(x_{1}-x_{2})^{4}}\sum_{j=3}^{N}(x_{2}-x_{j})^{2},    
\end{equation*}
\begin{equation*}
\alpha_{1, 2}(X, U)=\alpha_{2, 1}(X, U):=-\frac{(u_{1}-u_{2})^{2}}{(x_{1}-x_{2})^{4}}\sum_{j=3}^{N}(x_{1}-x_{j})(x_{2}-x_{j}),    
\end{equation*}
\begin{equation*}
\alpha_{2, 2}(X, U):=1+\frac{(u_{1}-u_{2})^{2}}{(x_{1}-x_{2})^{4}}\sum_{j=3}^{N}(x_{1}-x_{j})^{2},    
\end{equation*}
with $\beta:\Omega\rightarrow\mathbb{R}^{2\times 2}$ given by
\begin{equation*}
\beta_{1, 1}(X, U):=1+\frac{1}{(x_{1}-x_{2})^{2}}\sum_{j=3}^{N}(x_{2}-x_{j})^{2},    
\end{equation*}
\begin{equation*}
\beta_{1, 2}(X, U)=\beta_{2, 1}(X, U):=-\frac{1}{(x_{1}-x_{2})^{2}}\sum_{j=3}^{N}(x_{1}-x_{j})(x_{2}-x_{j}),    
\end{equation*}
\begin{equation*}
\beta_{2, 2}(X, U):=1+\frac{1}{(x_{1}-x_{2})^{2}}\sum_{j=3}^{N}(x_{1}-x_{j})^{2}    
\end{equation*}
for $(X, U)\in\Omega$. Moreover, $\gamma:\Omega\rightarrow\mathbb{R}^{(N-2)\times 2}$ is defined by
\begin{equation*}
\gamma_{j, 1}(X, U):=\frac{(u_{1}-u_{2})^{2}}{(x_{1}-x_{2})^{3}}(x_{2}-x_{j+2})    
\end{equation*}
and
\begin{equation*}
\gamma_{j, 2}(X, U):=-\frac{(u_{1}-u_{2})^{2}}{(x_{1}-x_{2})^{3}}(x_{1}-x_{j+2})    
\end{equation*}
for $j\in\{1, ..., N-2\}$, while $\delta:\Omega\rightarrow\mathbb{R}^{2\times 2}$ is defined by
\begin{equation*}
\delta_{1, 1}(X, U):=-\frac{u_{1}-u_{2}}{(x_{1}-x_{2})^{3}}\sum_{j=3}^{N}(x_{2}-x_{j})^{2},
\end{equation*}
\begin{equation*}
\delta_{1, 2}(X, U)=\delta_{2, 1}(X, U):=\frac{u_{1}-u_{2}}{(x_{1}-x_{2})^{3}}\sum_{j=3}^{N}(x_{1}-x_{j})(x_{2}-x_{j}),    
\end{equation*}
\begin{equation*}
\delta_{2, 2}(X, U):=-\frac{u_{1}-u_{2}}{(x_{1}-x_{2})^{3}}\sum_{j=3}^{N}(x_{1}-x_{j})^{2}    
\end{equation*}
for $(X, U)\in\Omega$. We now claim that for each $(X, U)\in\Omega$ the matrix \eqref{toughmatrix} can be reduced to lower-triangular form by both elementary row and column operations. Indeed, if we define the elementary matrix $\mathsf{E}(i, j, a)\in\mathbb{R}^{(N+2)\times (N+2)}$ by
\begin{equation*}
\mathsf{E}(i, j, a):=I_{N+2}+a(e_{i}^{N+2}\otimes e_{j}^{N+2})
\end{equation*}
for $i, j\in\{1, ..., N+2\}$ and $a\in\mathbb{R}$, then it holds that the product of matrices given by
\begin{equation}\label{lowerdiagid}
\begin{array}{c}
\displaystyle \left(\prod_{k=1}^{2}\mathsf{E}\left(k, N+k, \frac{u_{1}-u_{2}}{x_{1}-x_{2}}\right)\right)D\Psi(X, U)^{T}D\Psi(X, U)\left(\prod_{k=1}^{2}\mathsf{E}\left(k, N+k, -\frac{u_{1}-u_{2}}{x_{1}-x_{2}}\right)\right)\times\vspace{2mm}\\
\displaystyle \prod_{k=3}^{N}\mathsf{E}\left(k, N+1, \frac{(u_{1}-u_{2})(x_{2}-x_{k})}{(x_{1}-x_{2})^{2}}\right)\prod_{k=3}^{N}\mathsf{E}\left(k, N+2, -\frac{(u_{1}-u_{2})(x_{1}-x_{k})}{(x_{1}-x_{2})^{2}}\right)\times\vspace{2mm}\\\displaystyle
\mathsf{E}\left(N+1, N+2, \left(\sum_{j=1}^{N}(x_{2}-x_{j})^{2}\right)^{-1}\sum_{k=3}^{N}(x_{1}-x_{k})(x_{2}-x_{k})\right)
\end{array}
\end{equation}
equals the lower-triangular matrix 
\begin{equation*}   
\displaystyle 
\left(
\begin{array}{ccccccccc}
1 & 0 & 0 & 0 & \dots & 0 & 0 & 0 & 0 \vspace{2mm}\\
\star & 1 & 0 & 0 & \dots & 0 & 0 & 0 & 0 \vspace{2mm}\\
\star & \star & 1+\frac{(u_{1}-u_{2})^{2}}{(x_{1}-x_{2})^{2}} & 0 & \dots & 0 & 0 & 0 & 0 \vspace{2mm}\\
\star & \star & 0 & 1+\frac{(u_{1}-u_{2})^{2}}{(x_{1}-x_{2})^{2}} & \dots & 0 & 0 & 0 & 0\vspace{2mm}\\
\vdots & \vdots & \vdots & \vdots & \ddots & \vdots & \vdots & \vdots & \vdots \vspace{2mm} \\
\star & \star & 0 & 0 & \dots & 1+\frac{(u_{1}-u_{2})^{2}}{(x_{1}-x_{2})^{2}} & 0 & 0 & 0\vspace{2mm}\\
\star & \star & 0 & 0 & \dots & 0 & 1+\frac{(u_{1}-u_{2})^{2}}{(x_{1}-x_{2})^{2}} & 0 & 0 \vspace{2mm} \\
\star & \star & \star & \star & \dots & \star & \star & p(\zeta) & 0 \vspace{2mm}\\
\star & \star & \star & \star & \dots & \star & \star & \star & q(\zeta) 
\end{array}
\right),
\end{equation*}
where $\star$ denotes an entry whose value is immaterial to the value of the determinant, and where $p, q:\Omega\rightarrow\mathbb{R}$ are defined by
\begin{equation*}
p(\zeta):=\frac{1}{(x_{1}-x_{2})^{2}}\sum_{j=1}^{N}(x_{2}-x_{j})^{2}   
\end{equation*}
and
\begin{equation*}
q(\zeta):=\left(\sum_{j=1}^{N}(x_{2}-x_{j})^{2}\right)^{-1}\sum_{i>j}(x_{i}-x_{j})^{2}    
\end{equation*}
for $\zeta\in\Omega$. As the determinant of a lower-triangular matrix is given by the product of the elements on the diagonal, we find from \eqref{lowerdiagid} above that
\begin{equation}\label{almostthere}
\begin{array}{c}
\displaystyle \prod_{k=1}^{2}\mathrm{det}\,\mathsf{E}\left(k, N+k, \frac{u_{1}-u_{2}}{x_{1}-x_{2}}\right)\mathrm{det}\,\mathsf{E}\left(k, N+k, -\frac{u_{1}-u_{2}}{x_{1}-x_{2}}\right)\times \vspace{2mm} \\  
\displaystyle \prod_{k=3}^{N}\mathrm{det}\,\mathsf{E}\left(k, N+1, \frac{(u_{1}-u_{2})(x_{2}-x_{k})}{(x_{1}-x_{2})^{2}}\right)\mathrm{det}\,\mathsf{E}\left(k, N+2, -\frac{(u_{1}-u_{2})(x_{1}-x_{k})}{(x_{1}-x_{2})^{2}}\right)\times \vspace{2mm} \\
\displaystyle \mathrm{det}\,\mathsf{E}\left(N+1, N+2, \left(\sum_{j=1}^{N}(x_{2}-x_{j})^{2}\right)^{-1}\sum_{k=3}^{N}(x_{1}-x_{k})(x_{2}-x_{k})\right)\mathrm{det}(D\Psi(\zeta)^{T}D\Psi(\zeta))=\vspace{2mm}\\
\displaystyle \frac{((u_{1}-u_{2})^{2}+(x_{1}-x_{2})^{2})^{N-2}}{(x_{1}-x_{2})^{2(N-1)}}\sum_{i>j}(x_{i}-x_{j})^{2}.
\end{array}
\end{equation}
Finally, as it holds that
\begin{equation*}
\mathrm{det}(\mathsf{E}(i, j, a))=1    
\end{equation*}
for any $i, j\in\{1, ..., N+2\}$ and any $a\in\mathbb{R}$, we infer from \eqref{almostthere} above that
\begin{equation*}
\mathrm{det}(D\Psi(\zeta)^{T}D\Psi(\zeta))=\frac{((u_{1}-u_{2})^{2}+(x_{1}-x_{2})^{2})^{N-2}}{(x_{1}-x_{2})^{2(N-1)}}\sum_{i>j}(x_{i}-x_{j})^{2}    
\end{equation*}
for all $\zeta\in \Omega$, from which follows the proof of the Proposition.
\end{proof}
\subsection{Some Auxiliary Maps and Identities}\label{auxmappies}
It is not straightforward to perform a change of coordinates in the integral
\begin{equation}\label{inttofind}
\int_{\Omega}\Phi(T^{t}_{\sigma}(\Psi(\zeta)))M(\Psi(\zeta))\omega(\zeta)\,d\mathscr{L}(\zeta)    
\end{equation}
with the aim of eliminating the presence of compositions in the argument of $\Phi$, since the dimension of the domain of the map $T^{t}_{\sigma}\circ\Psi$ as a subset of Euclidean space does not match the dimension of its range. Given our choice of chart map $\Psi$, it is prudent to define a \emph{reduced flow map} $\widetilde{T}^{t}$ pointwise by
\begin{equation*}
\widetilde{T}^{t}(\zeta):=\zeta+t\sum_{i=1}^{N}(e_{i}^{N+2}\otimes e_{i+N}^{2N})\Psi(\zeta)
\end{equation*}
for all $\zeta=(X, U)\in\Omega$ and each $t\in\mathbb{R}$. Evidently, this reduced flow map has the property that the dimension of its domain is the same as its range. The reader may check that, for each $t\in\mathbb{R}$, the map $\widetilde{T}^{t}$ is a smooth diffeomorphism of $\mathcal{U}_{t}^{-}$ with inverse given by $(\widetilde{T}^{t})^{-1}=\widetilde{T}^{-t}$. However, from the point of view of \eqref{inttofind}, we must understand how the remaining $N-2$ `excluded' velocity variables transform when composed with this map. Indeed, for each $N\geq 3$ we define the \emph{reduced velocity map} $\overline{\psi}:\mathcal{U}_{t}^{-}\rightarrow\mathbb{R}^{N-2}$ pointwise by
\begin{equation*}
\overline{\psi}_{i}(\zeta):=\psi_{i+2}(\zeta)
\end{equation*}
for $\zeta\in\mathcal{U}_{t}^{-}$ and $i\in\{1, ..., N-2\}$. As such, we require the following Lemma, which we state without proof.
\begin{lem}
For $N\geq 3$, it holds that
\begin{equation*}
(\overline{\psi}\circ \widetilde{T}^{-t})(\zeta)=\overline{\psi}(\zeta)    
\end{equation*}
for all $\zeta\in\widetilde{T}^{-t}(\mathcal{U}_{t}^{-})$ and all $t\in\mathbb{R}$. 
\end{lem}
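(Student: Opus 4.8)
The plan is to prove the identity by a direct coordinate computation, exploiting the fact that the reduced flow map $\widetilde{T}^{-t}$ is just free backward motion of the positions at the velocities $\psi_i(\zeta)$, while it leaves the two chart velocities $u_1,u_2$ untouched. First I would unpack $\widetilde{T}^{-t}$ componentwise. By the definition of $\Psi$ one has $\sum_{i=1}^{N}(e_i^{N+2}\otimes e_{i+N}^{2N})\Psi(\zeta)=(\psi_1(\zeta),\ldots,\psi_N(\zeta),0,0)\in\mathbb{R}^{N+2}$, so writing $\zeta':=\widetilde{T}^{-t}(\zeta)$ gives $x_i'=x_i-t\psi_i(\zeta)$ for $i\in\{1,\ldots,N\}$ together with $u_1'=u_1$ and $u_2'=u_2$. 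Recalling $\psi_1(\zeta)=u_1$ and $\psi_2(\zeta)=u_2$, the first two transformed positions are $x_1'=x_1-tu_1$ and $x_2'=x_2-tu_2$, whence $x_1'-x_2'=(x_1-x_2)-t(u_1-u_2)$.

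The heart of the argument is a cancellation, and I would set it up by first recording two auxiliary identities. Abbreviating $d:=x_1-x_2$ and substituting the defining formula $\psi_{i+2}(\zeta)=((x_{i+2}-x_2)u_1-(x_{i+2}-x_1)u_2)/d$, the relation $(x_{i+2}-x_2)-(x_{i+2}-x_1)=d$ yields $\psi_{i+2}(\zeta)-u_2=(x_{i+2}-x_2)(u_1-u_2)/d$ and $\psi_{i+2}(\zeta)-u_1=(x_{i+2}-x_1)(u_1-u_2)/d$. Consequently
\[
x_{i+2}'-x_2'=(x_{i+2}-x_2)\Bigl(1-\tfrac{t(u_1-u_2)}{d}\Bigr),\qquad x_{i+2}'-x_1'=(x_{i+2}-x_1)\Bigl(1-\tfrac{t(u_1-u_2)}{d}\Bigr),
\]
and the scalar $1-t(u_1-u_2)/d=(x_1'-x_2')/d$ is exactly the factor appearing in the denominator $x_1'-x_2'$. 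Plugging these into $\psi_{i+2}(\zeta')=((x_{i+2}'-x_2')u_1-(x_{i+2}'-x_1')u_2)/(x_1'-x_2')$, the common factor divides out against the denominator and leaves $((x_{i+2}-x_2)u_1-(x_{i+2}-x_1)u_2)/d=\psi_{i+2}(\zeta)$. Since $\overline{\psi}_i=\psi_{i+2}$ for $i\in\{1,\ldots,N-2\}$, this is precisely $(\overline{\psi}\circ\widetilde{T}^{-t})(\zeta)=\overline{\psi}(\zeta)$; note that the sign of $t$ is never used, so this one calculation handles all $t\in\mathbb{R}$ simultaneously.

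The only genuine subtlety, and the step I would be most careful about, is well-definedness: the chart formula for $\psi_{i+2}(\zeta')$ requires $x_1'\neq x_2'$, i.e. $(x_1-x_2)-t(u_1-u_2)\neq0$, which is exactly the condition that the backward free flight does not pass through the simultaneous-collision configuration. I would verify that this holds for $\zeta\in\widetilde{T}^{-t}(\mathcal{U}_t^-)$ by tracing the defining inequalities of $\mathcal{U}_t^-$ through $\widetilde{T}^{t}$. Conceptually, the whole identity is a manifestation of the fact that free motion preserves each sphere's velocity: the $N$ spacetime lines $s\mapsto x_i+s\psi_i(\zeta)$ all meet at a single point, and translating the time origin by $-t$ neither displaces that meeting point nor changes any slope, so the slaved velocities $\psi_3,\ldots,\psi_N$ are left invariant. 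This viewpoint is what makes the cancellation above inevitable rather than accidental, and it could be turned into a coordinate-free proof if one prefers.
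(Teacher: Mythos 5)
Your computation is correct, and in fact the paper offers no proof to compare it against: the lemma is explicitly ``stated without proof,'' so your direct verification is presumably exactly the intended argument. The two auxiliary identities $\psi_{i+2}(\zeta)-u_2=(x_{i+2}-x_2)(u_1-u_2)/(x_1-x_2)$ and $\psi_{i+2}(\zeta)-u_1=(x_{i+2}-x_1)(u_1-u_2)/(x_1-x_2)$ check out, the common factor $1-t(u_1-u_2)/(x_1-x_2)=(x_1'-x_2')/(x_1-x_2)$ cancels against the denominator as you claim, and your closing geometric remark (free flight preserves slopes and the common meeting point of the $N$ world-lines, hence the slaved velocities) correctly explains why the cancellation is forced; it also explains the companion claim $(\widetilde{T}^{t})^{-1}=\widetilde{T}^{-t}$ that the paper likewise leaves to the reader. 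One caution on the well-definedness step you flag: tracing the inequalities defining $\mathcal{U}_t^-$ will \emph{not} give you $x_1'-x_2'\neq 0$ everywhere. For $t>0$ the set $\mathcal{U}_t^-$ contains diverging configurations ($u_1<u_2$) whose backward free flight does pass through the simultaneous-collision point at time $(x_2-x_1)/(u_2-u_1)<t$, so the degenerate locus $\{(x_1-x_2)=t(u_1-u_2)\}$ genuinely meets the relevant domain. The identity therefore only holds off this set; since it is Lebesgue-null in $\Omega$, this does not affect the integral identity \eqref{usefulid} for which the lemma is used, but you should state the conclusion as holding wherever both sides are defined rather than promise a verification that cannot succeed pointwise. (The paper's own domain bookkeeping here — writing $\zeta\in\widetilde{T}^{-t}(\mathcal{U}_t^-)$ and then applying $\widetilde{T}^{-t}$ again — is itself loose, so you are not losing anything relative to the source.)
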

As a consequence of this Lemma, it follows that
\begin{equation}\label{usefulid}
\Phi(\zeta, \overline{\psi}(\widetilde{T}^{-t}(\zeta)))=\Phi(\zeta, \overline{\psi}(\zeta))=\Phi(\Psi(\zeta))    
\end{equation}
for all $t\in\mathbb{R}$, which is an identity of which we make use in the sequel. We also require the following formula for the Jacobian of the coordinate transformation $\widetilde{T}^{t}$ which holds for any $N\geq 3$.
\begin{lem}
For any $N\geq 3$, it holds that
\begin{equation}\label{translatejaccy}
\mathrm{det}(D\widetilde{T}^{t}(\zeta))=\left(\frac{x_{1}+tu_{1}-x_{2}-tu_{2}}{x_{1}-x_{2}}\right)^{N-2}
\end{equation}
for each $\zeta=(X, U)\in(\mathcal{U}_{t}^{-})^{\circ}$ and each $t\in\mathbb{R}$.
\end{lem}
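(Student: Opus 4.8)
The plan is to read off the explicit coordinate action of $\widetilde{T}^{t}$, exhibit its Jacobian in block form, and thereby reduce the determinant to that of a single scalar multiple of an identity block. No genuinely hard step arises; the whole content is careful bookkeeping of which partial derivatives of $\psi$ vanish.

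First I would unpack the defining sum. The operator $\sum_{i=1}^{N}(e_{i}^{N+2}\otimes e_{i+N}^{2N})$ extracts from $\Psi(\zeta)$ its velocity components $\psi_{1}(\zeta), \dots, \psi_{N}(\zeta)$ (which occupy the slots $N+1, \dots, 2N$ of $\Psi(\zeta)\in\mathbb{R}^{2N}$) and deposits them into the first $N$ slots of an $(N+2)$-vector, leaving the two slots indexed by $N+1, N+2$ equal to zero. Writing $\widetilde{T}^{t}(\zeta)=(x_{1}', \dots, x_{N}', u_{1}', u_{2}')$, this yields $x_{i}'=x_{i}+t\psi_{i}(\zeta)$ for $i\in\{1, \dots, N\}$ together with $u_{1}'=u_{1}$ and $u_{2}'=u_{2}$. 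Since the last two output coordinates depend only on $(u_{1}, u_{2})$, the Jacobian has the block form $D\widetilde{T}^{t}(\zeta)=\left(\begin{smallmatrix} \partial X'/\partial X & \partial X'/\partial U \\ 0_{2\times N} & I_{2}\end{smallmatrix}\right)$, which is block upper-triangular, so $\mathrm{det}(D\widetilde{T}^{t}(\zeta))=\mathrm{det}(\partial X'/\partial X)$ irrespective of the block $\partial X'/\partial U$.

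It then remains to compute the determinant of the $N\times N$ matrix $J$ with $J_{ij}=\delta_{ij}+t\,\partial\psi_{i}/\partial x_{j}$. Here I would invoke the explicit form of the velocity map. Because $\psi_{1}=u_{1}$ and $\psi_{2}=u_{2}$ are independent of $X$, rows $1$ and $2$ of $J$ coincide with $e_{1}^{N}$ and $e_{2}^{N}$. For $i\geq 3$, the map $\psi_{i}$ depends on $X$ only through $x_{1}, x_{2}$ and $x_{i}$, so the only nonzero off-identity entries of row $i$ sit in columns $1, 2, i$; a direct differentiation gives the diagonal contribution $\partial\psi_{i}/\partial x_{i}=(u_{1}-u_{2})/(x_{1}-x_{2})$. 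Grouping the indices $\{1,2\}$ against $\{3, \dots, N\}$, the matrix $J$ therefore takes the block lower-triangular form $\left(\begin{smallmatrix} I_{2} & 0_{2\times(N-2)} \\ C & \lambda I_{N-2}\end{smallmatrix}\right)$, where $\lambda:=1+t(u_{1}-u_{2})/(x_{1}-x_{2})$ and $C$ collects the derivatives $\partial\psi_{i}/\partial x_{1}$ and $\partial\psi_{i}/\partial x_{2}$ for $i\geq 3$.

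The payoff is that the block $C$ never enters the computation: the determinant of a block lower-triangular matrix is the product of the determinants of its diagonal blocks, so $\mathrm{det}(J)=\mathrm{det}(I_{2})\,\mathrm{det}(\lambda I_{N-2})=\lambda^{N-2}$. Rewriting $\lambda=(x_{1}+tu_{1}-x_{2}-tu_{2})/(x_{1}-x_{2})$ delivers the claimed identity \eqref{translatejaccy}. The single point demanding care is confirming which derivatives $\partial\psi_{i}/\partial x_{j}$ vanish, and noting that the surviving off-diagonal derivatives (those with respect to $x_{1}, x_{2}$) populate only the lower-left block $C$ and hence are immaterial to the determinant. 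Smoothness of $\psi$ on $\Omega$, which legitimises all the differentiation, follows from the strict inequalities $x_{1}<x_{2}$ (from $X\in\mathcal{Q}^{\circ}$) and $u_{1}\neq u_{2}$ (from $U\notin\Delta$) that hold throughout $(\mathcal{U}_{t}^{-})^{\circ}$.
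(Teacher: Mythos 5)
Your proposal is correct and follows essentially the same route as the paper: both compute the explicit block structure of $D\widetilde{T}^{t}$ (identifying which derivatives $\partial\psi_{i}/\partial x_{j}$ vanish and that the diagonal entries of the middle block equal $(x_{1}+tu_{1}-x_{2}-tu_{2})/(x_{1}-x_{2})$) and then read off the determinant from the triangular structure. The only difference is presentational --- the paper zeroes out the off-diagonal blocks by explicit elementary column operations before taking the product of diagonal entries, whereas you invoke the block-triangular determinant formula directly, which is a tidier way of saying the same thing.
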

\begin{proof}
It is straightforward to show by differentiation that $D\widetilde{T}^{t}:\mathcal{U}_{t}^{-}\rightarrow\mathbb{R}^{(N+2)\times (N+2)}$ is given blockwise by
\begin{equation*}
D\widetilde{T}^{t}(X, U)=\left(
\begin{array}{c  c  c}
I_{2} & 0_{2\times (N-2)} & tI_{2} \vspace{2mm} \\
A(X, U, t) & \displaystyle \left(\frac{x_{1}+tu_{1}-x_{2}-tu_{2}}{x_{1}-x_{2}}\right)I_{N-2} & B(X, U, t) \vspace{2mm}\\
0_{2} & 0_{2\times (N-2)} & I_{2}
\end{array}
\right),    
\end{equation*}
where $A:\Omega\times\mathbb{R}\rightarrow \mathbb{R}^{(N-2)\times 2}$ is given by
\begin{equation*}
A_{j, 1}(X, U, t):=\frac{t(u_{1}-u_{2})(x_{2}-x_{j+2})}{(x_{1}-x_{2})^{2}}    
\end{equation*}
and
\begin{equation*}
A_{j, 2}(X, U, t):=-\frac{t(u_{1}-u_{2})(x_{1}-x_{j+2})}{(x_{1}-x_{2})^{2}}
\end{equation*}
for $j\in\{1, ..., N-2\}$, and $B:\Omega\times\mathbb{R}\rightarrow \mathbb{R}^{(N-2)\times 2}$ is given by
\begin{equation*}
B_{j, 1}(X, U, t):=-\frac{t(x_{2}-x_{j+2})}{x_{1}-x_{2}}    
\end{equation*}
and 
\begin{equation*}
B_{j, 2}(X, U, t):=\frac{t(x_{1}-x_{j+2})}{x_{1}-x_{2}}        
\end{equation*}
for $j\in\{1, ..., N-2\}$. By performing elementary column operations, it holds that the product
\begin{equation*}
\begin{array}{c}
\displaystyle D\widetilde{T}^{t}(\zeta)\prod_{i=1}^{2}\mathsf{E}(i, N+i, -t)\prod_{j=3}^{N}\mathsf{E}\left(j, N+1, t\frac{x_{2}-x_{j}}{x_{1}-x_{2}}\right)\times\vspace{2mm} \\
\displaystyle \prod_{k=3}^{N}\mathsf{E}\left(k, N+2, -t\frac{x_{1}-x_{k}}{x_{1}-x_{2}}\right)
\end{array}
\end{equation*}
equals the lower-triangular matrix
\begin{equation*}
\left(
\begin{array}{ccccccccc}
1 & 0 & 0 & 0 & \dots & 0 & 0 & 0 & 0 \\
0 & 1 & 0 & 0 & \dots & 0 & 0 & 0 & 0 \\
\star & \star & \frac{x_{1}+tu_{1}-x_{2}-tu_{2}}{x_{1}-x_{2}} & 0 & \dots & 0 & 0 & 0 & 0 \vspace{1mm}\\ 
\star & \star & 0 & \frac{x_{1}+tu_{1}-x_{2}-tu_{2}}{x_{1}-x_{2}} & \dots & 0 & 0 & 0 & 0 \vspace{1mm} \\
\vdots & \vdots & \vdots & \vdots & \ddots & \vdots & \vdots & \vdots & \vdots \\
\star & \star & 0 & 0 & \dots & \frac{x_{1}+tu_{1}-x_{2}-tu_{2}}{x_{1}-x_{2}} & 0 & 0 & 0 \vspace{1mm} \\
\star & \star & 0 & 0 & \dots & 0 & \frac{x_{1}+tu_{1}-x_{2}-tu_{2}}{x_{1}-x_{2}} & 0 & 0 \vspace{1mm} \\
0 & 0 & 0 & 0 & \dots & 0 & 0 & 1 & 0 \\
0 & 0 & 0 & 0 & \dots & 0 & 0 & 0 & 1 \\
\end{array}
\right)
\end{equation*}
for $\zeta\in\mathcal{U}_{t}^{-}$. Once again, as the determinant of a lower-triangular matrix equals the product of the elements on its diagonal, it follows that
\begin{equation*}
\mathrm{det}(D\widetilde{T}^{t}(X, U))=\left(\frac{x_{1}+tu_{1}-x_{2}-tu_{2}}{x_{1}-x_{2}}\right)^{N-2}    
\end{equation*}
as claimed in the statement of the Lemma.
\end{proof}
We also require the following structural result, whose algebraic proof we also leave to the reader.
\begin{lem}\label{densitycomp1}
For any $N\geq 3$, it holds that
\begin{equation*}
\begin{array}{c}
(\omega\circ \widetilde{T}^{-t})(\zeta)= \vspace{2mm} \\ 
\displaystyle \left(\frac{x_{1}-x_{2}}{x_{1}-tu_{1}-x_{2}+tu_{2}}\right)^{N-2}\left(\frac{(u_{1}-u_{2})^{2}+(x_{1}-tu_{1}-x_{1}+tu_{2})}{(u_{1}-x_{2})^{2}+(x_{1}-x_{2})^{2}}\right)^{\frac{N-2}{2}}\omega(\zeta)    
\end{array}
\end{equation*}
for $\zeta=(X, U)\in\widetilde{T}^{-t}(\mathcal{U}_{t}^{-})$ and $t\in\mathbb{R}$.
\end{lem}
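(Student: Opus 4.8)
The plan is to reduce the whole computation to a single scalar \emph{rescaling factor} governing how each spatial separation $x_{i}-x_{j}$ transforms under the reduced flow. First I would record the action of $\widetilde{T}^{-t}$ in coordinates: writing $\zeta=(X,U)$ and $\zeta'=\widetilde{T}^{-t}(\zeta)$, one has $x_{i}'=x_{i}-t\,\psi_{i}(\zeta)$ for $i\in\{1,\dots,N\}$ while the last two coordinates are left fixed, $u_{1}'=u_{1}$ and $u_{2}'=u_{2}$. Hence the velocity separation $u_{1}-u_{2}$ is invariant, and the only ingredients of the density formula \eqref{surfacedensity} that change under $\widetilde{T}^{-t}$ are the spatial separations.

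The crux is that \emph{all} of these separations rescale by one common factor. Reading off the chart formula, the velocity map is affine in position, $\psi_{i}=m\,x_{i}+b$ for $i\in\{1,\dots,N\}$, with common slope $m:=(u_{1}-u_{2})/(x_{1}-x_{2})$ and intercept $b$ independent of $i$; equivalently $\psi_{i}=(c-x_{i})/\tau$, where $\tau$ is the collision time \eqref{coll1and2} and $c$ is the common collision point --- this being precisely the simultaneous-collision condition defining $\mathcal{M}$. It follows that $\psi_{i}-\psi_{j}=m\,(x_{i}-x_{j})$ and therefore
\begin{equation*}
x_{i}'-x_{j}'=(x_{i}-x_{j})-t(\psi_{i}-\psi_{j})=R\,(x_{i}-x_{j}),\qquad R:=1-tm=\frac{x_{1}-tu_{1}-x_{2}+tu_{2}}{x_{1}-x_{2}},
\end{equation*}
for every pair $i,j$. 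In particular $x_{1}'-x_{2}'=R(x_{1}-x_{2})=:D_{t}$, so that $(u_{1}'-u_{2}')^{2}+(x_{1}'-x_{2}')^{2}=(u_{1}-u_{2})^{2}+D_{t}^{2}$, while $\big(\sum_{i>j}(x_{i}'-x_{j}')^{2}\big)^{1/2}=|R|\big(\sum_{i>j}(x_{i}-x_{j})^{2}\big)^{1/2}$.

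With these two observations the remainder is a direct substitution into \eqref{surfacedensity}. The radical $\big(\sum_{i>j}(x_{i}-x_{j})^{2}\big)^{1/2}$ cancels between numerator and denominator (contributing a single factor $|R|$), and the ratio collapses to
\begin{equation*}
\frac{\omega(\zeta')}{\omega(\zeta)}=|R|^{2-N}\,\frac{\big((u_{1}-u_{2})^{2}+D_{t}^{2}\big)^{\frac{N-2}{2}}}{\big((u_{1}-u_{2})^{2}+(x_{1}-x_{2})^{2}\big)^{\frac{N-2}{2}}}.
\end{equation*}
Since $\omega\circ\widetilde{T}^{-t}$ is only evaluated where $\widetilde{T}^{-t}(\zeta)\in\Omega$, the configuration $\zeta'$ has ordered positions, which together with the ordering of $\zeta$ forces $R>0$; then $|R|^{2-N}=R^{2-N}=\big((x_{1}-x_{2})/D_{t}\big)^{N-2}$, and the claimed identity follows upon substituting $D_{t}=x_{1}-tu_{1}-x_{2}+tu_{2}$.

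The main obstacle is the careful bookkeeping of signs and absolute values rather than any deep structural difficulty. Because $\omega$ is the square root $\sqrt{\det(D\Psi^{T}D\Psi)}$, the factor $(x_{1}-x_{2})^{N-1}$ appearing in \eqref{surfacedensity} must be read as $|x_{1}-x_{2}|^{N-1}$; on $\mathcal{Q}^{\circ}$ one has $x_{1}<x_{2}$, so this power is genuinely negative for even $N$, and the sole delicate point is to confirm that $R>0$ holds throughout the relevant domain $\widetilde{T}^{-t}(\mathcal{U}_{t}^{-})$ --- that is, that the reduced flow does not carry the configuration through its collision on the time interval in question --- so that $|R|^{2-N}$ reduces cleanly to the signed power displayed above. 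All remaining manipulations are routine algebra, which is why the lemma is stated without proof.
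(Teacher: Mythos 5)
Your proof is correct, and since the paper declares this lemma's ``algebraic proof'' to be left to the reader, you are supplying an argument the paper omits entirely; what you give is the natural one. The key step --- that $\psi$ is affine in position with slope $(u_{1}-u_{2})/(x_{1}-x_{2})$, so that every separation $x_{i}-x_{j}$ rescales by the single factor $R=(x_{1}-tu_{1}-x_{2}+tu_{2})/(x_{1}-x_{2})$ under $\widetilde{T}^{-t}$ while $u_{1}-u_{2}$ is untouched --- is exactly the homogeneity the paper itself records later as identity \eqref{ratioidentitysimple}, and substituting into \eqref{surfacedensity} then gives the stated ratio. Two of your side remarks are worth keeping. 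First, your computation implicitly corrects two typographical slips in the lemma as printed (the second factor should read $(u_{1}-u_{2})^{2}+(x_{1}-tu_{1}-x_{2}+tu_{2})^{2}$ in the numerator and $(u_{1}-u_{2})^{2}$ in the denominator); the corrected form is the one the paper actually uses in Section \ref{derifunky}. Second, your caveat about $R>0$ is genuine and is handled more carefully by you than by the paper: since $\omega=\sqrt{\det(D\Psi^{T}D\Psi)}$ the denominator of \eqref{surfacedensity} is really $|x_{1}-x_{2}|^{N-1}$, and the signed power $R^{2-N}$ in the lemma agrees with $|R|^{2-N}$ only when both $\zeta$ and $\widetilde{T}^{-t}(\zeta)$ lie in the chart domain $\Omega$ (both configurations ordered), which forces $R>0$. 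For odd $N$ and configurations whose backward free flow passes through collision before time $t$ (which membership in $\mathcal{U}_{t}^{-}$ alone does not exclude when $u_{1}<u_{2}$), the identity would acquire a sign; your restriction to the set where $\widetilde{T}^{-t}(\zeta)\in\Omega$ is therefore not mere pedantry but the correct hypothesis under which the lemma holds as written.
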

\subsection{Derivation of the Density Functional Equation}\label{derifunky}
With the above building blocks of Section \ref{auxmappies} in place, we may now approach the derivation of a functional equation which, if satisfied by a density $M$, generates an invariant measure $\mu=M\lambda$. For any $\Phi\in C^{0}_{c}(\mathbb{R}^{2N}, \mathbb{R})$ with the support property $\mathscr{H}(\mathrm{supp}(\Phi)\cap\mathcal{M}_{t}^{-})>0$, we have that
\begin{equation*}
\begin{array}{cl}
 & \langle T^{t}_{\sigma}\#\mu, \Phi\rangle \vspace{2mm}\\
 = & \displaystyle \int_{\widetilde{T}^{-t}(\mathcal{U}_{t}^{-})}\Phi(\widetilde{T}^{t}(\zeta), \overline{\psi}(\zeta))M(\zeta, \overline{\psi}(\zeta))\omega(\zeta)\,d\mathscr{L}(\zeta) \vspace{2mm} \\
 = & \displaystyle \int_{\mathcal{U}_{t}^{-}}\Phi(\zeta, \overline{\psi}(\widetilde{T}^{-t}(\zeta)))M(\widetilde{T}^{-t}(\zeta), \overline{\psi}(\widetilde{T}^{-t}(\zeta)))\omega(\widetilde{T}^{-t}(\zeta))\left|\mathrm{det}(D\widetilde{T}^{t}(\widetilde{T}^{-t}(\zeta)))^{-1}\right|\,d\mathscr{L}(\zeta) \vspace{2mm} \\
 \overset{\eqref{translatejaccy}}{=} & \displaystyle \int_{\mathcal{U}_{t}^{-}}\Phi(\zeta, \overline{\psi}(\widetilde{T}^{-t}(\zeta)))M(\widetilde{T}^{-t}(\zeta), \overline{\psi}(\widetilde{T}^{-t}(\zeta)))\omega(\widetilde{T}^{-t}(\zeta))\left|\frac{x_{1}-tu_{1}-x_{2}+tu_{2}}{x_{1}-x_{2}}\right|^{N-2}\,d\mathscr{L}(\zeta)  \vspace{2mm} \\
 \overset{\eqref{usefulid}}{=} & \displaystyle\int_{\mathcal{U}_{t}^{-}}\Phi(\Psi(\zeta))M(\widetilde{T}^{-t}(\zeta), \overline{\psi}(\zeta))\omega(\widetilde{T}^{-t}(\zeta))\left|\frac{x_{1}-tu_{1}-x_{2}+tu_{2}}{x_{1}-x_{2}}\right|^{N-2}\,d\mathscr{L}(\zeta),
\end{array}
\end{equation*}
and as by Lemma \ref{densitycomp1} 
\begin{equation*}
\omega(\widetilde{T}^{-t}(\zeta))\left(\frac{x_{1}-tu_{1}-x_{2}+tu_{2}}{x_{1}-x_{2}}\right)^{N-2}= \left(\frac{(u_{1}-u_{2})^{2}+(x_{1}-tu_{1}-x_{1}+tu_{2})^{2}}{(u_{1}-x_{2})^{2}+(x_{1}-x_{2})^{2}}\right)^{\frac{N-2}{2}}\omega(\zeta)    
\end{equation*}
for all $\zeta\in\Omega$, we conclude that
\begin{equation*}
\int_{\mathcal{M}}\Phi\,d(T^{t}_{\sigma}\#\mu)=\int_{\mathcal{M}}\Phi\,d\mu   
\end{equation*}
for all $\Phi\in C^{0}_{c}(\mathbb{R}^{2N}, \mathbb{R})$ with the aforementioned support property if the density $M$ of the measure $\mu$ satisfies the identity
\begin{equation*}
\left(\frac{(u_{1}-u_{2})^{2}+(x_{1}-tu_{1}-x_{1}+tu_{2})^{2}}{(u_{1}-u_{2})^{2}+(x_{1}-x_{2})^{2}}\right)^{\frac{N-2}{2}}M(\widetilde{T}^{-t}(\zeta), \overline{\psi}(\zeta))=M(\Psi(\zeta)),    
\end{equation*}
which reads more straightforwardly as 
\begin{equation}\label{firstmid}
\left(\frac{(u_{1}-u_{2})^{2}+(x_{1}-tu_{1}-x_{1}+tu_{2})^{2}}{(u_{1}-u_{2})^{2}+(x_{1}-x_{2})^{2}}\right)^{\frac{N-2}{2}}M(X-t\psi(\zeta), \psi(\zeta))=M(X, \psi(\zeta))    
\end{equation}
for all $\zeta=(X, U)\in\Omega$ and all $t\in\mathbb{R}$. The reader will note that there appears to be a spurious bias on the variables indexed by 1 and 2 in the above identity \eqref{firstmid}. However, this is simply a consequence of our choice of chart map $\Psi$. Indeed, we may rewrite identity \eqref{firstmid} in such a way it becomes symmetric in all variables. In this direction, we require another structural lemma which we state without proof.
\begin{lem}\label{ratioids}
For any $N\geq 3$, it holds that
\begin{equation*}
\begin{array}{c}
\displaystyle \frac{(u_{1}-u_{2})^{2}+(x_{1}-tu_{1}-x_{1}+tu_{2})^{2}}{(u_{1}-u_{2})^{2}+(x_{1}-x_{2})^{2}}= \vspace{2mm} \\
\displaystyle \frac{(\psi_{i}(\zeta)-\psi_{j}(\zeta))^{2}+(x_{i}-t\psi_{i}(\zeta)-x_{j}+t\psi_{j}(\zeta))^{2}}{(\psi_{i}(\zeta)-\psi_{j}(\zeta))^{2}+(x_{i}-x_{j})^{2}} 
\end{array}
\end{equation*}
for $i\in\{1, ..., N\}$ with $i<j$ and $\zeta\in\mathcal{U}_{t}^{-}$.
\end{lem}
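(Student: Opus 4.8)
The plan is to reduce the claimed identity to the single structural fact that, for a point $\zeta=(X,U)\in\mathcal{U}_{t}^{-}$ charting a configuration in $\mathcal{M}^{\circ}$, all pairwise position differences $x_i-x_j$ are proportional to the corresponding velocity differences $\psi_i(\zeta)-\psi_j(\zeta)$ through one and the same factor. This is nothing but the observation, already recorded after the definition of the collision time map $\tau$, that $\tau(Z)=-(x_i-x_j)/(v_i-v_j)$ is independent of the pair $(i,j)$ on $\mathcal{M}^{\circ}$; I would make it quantitative in the coordinates of the chart $\Psi$.

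First I would establish the key algebraic identity
\[
\psi_i(\zeta)-\psi_j(\zeta)=\frac{u_1-u_2}{x_1-x_2}\,(x_i-x_j)
\qquad\text{for all } 1\le i<j\le N.
\]
For $i,j\in\{1,2\}$ this is immediate from $\psi_1=u_1$, $\psi_2=u_2$. For the remaining cases I would substitute the definition $\psi_i(\zeta)=\big[(x_i-x_2)u_1-(x_i-x_1)u_2\big]/(x_1-x_2)$ and simplify: a direct computation gives $\psi_i(\zeta)-u_1=(x_i-x_1)(u_1-u_2)/(x_1-x_2)$, and subtracting two such expressions collapses the $u_1,u_2$ terms to yield the displayed formula for any pair. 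This is the only genuine calculation in the proof, and it is short.

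With $r:=(x_1-x_2)/(u_1-u_2)$ in hand, the identity above says $x_i-x_j=r\,(\psi_i-\psi_j)$ for every pair, so writing $a_{ij}:=\psi_i(\zeta)-\psi_j(\zeta)$ I would compute $x_i-t\psi_i-x_j+t\psi_j=(r-t)a_{ij}$ and $x_i-x_j=r\,a_{ij}$, whence
\[
\frac{(\psi_i-\psi_j)^2+(x_i-t\psi_i-x_j+t\psi_j)^2}{(\psi_i-\psi_j)^2+(x_i-x_j)^2}
=\frac{a_{ij}^2\big(1+(r-t)^2\big)}{a_{ij}^2\big(1+r^2\big)}
=\frac{1+(r-t)^2}{1+r^2}.
\]
The right-hand side no longer depends on $(i,j)$; taking $i=1$, $j=2$ recovers the left-hand side of the lemma, so all the pairwise ratios coincide.

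There is essentially no analytic obstacle here: once the proportionality $x_i-x_j=r(\psi_i-\psi_j)$ is recognized as the defining feature of $\mathcal{M}$ expressed in the chart, the ratio factors completely and the dependence on the pair disappears. The only thing requiring care is the bookkeeping in verifying the proportionality for indices $\ge 3$ and keeping the sign conventions straight; note in particular that the numerator on the left of the stated identity should be read as $(x_1-tu_1-x_2+tu_2)^2$, i.e. $\big((x_1-x_2)-t(u_1-u_2)\big)^2$, consistent with the shear $\widetilde{T}^{-t}$ acting on $\mathcal{U}_t^-$.
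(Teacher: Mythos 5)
Your proof is correct: the paper states this lemma without proof, and the key proportionality you establish, $\psi_{i}(\zeta)-\psi_{j}(\zeta)=\tfrac{u_{1}-u_{2}}{x_{1}-x_{2}}(x_{i}-x_{j})$, is precisely the identity the paper itself records later as \eqref{ratioidentitysimple}, so your argument is the intended one. You are also right that the numerator on the left-hand side of the statement should read $(x_{1}-tu_{1}-x_{2}+tu_{2})^{2}$; with that reading the cancellation of $a_{ij}^{2}$ (nonzero on $\mathcal{U}_{t}^{-}$ since $u_{1}\neq u_{2}$ and $x_{i}\neq x_{j}$ there) completes the proof.
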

Employing the result of Lemma \ref{ratioids} above, the identity \eqref{firstmid} may be rewritten in the more symmetric form as
\begin{equation*}
\begin{array}{c}
\displaystyle \left(\prod_{i<j}\frac{(\psi_{i}-\psi_{j})^{2}+(x_{i}-t\psi_{i}-x_{j}+t\psi_{j})^{2}}{(\psi_{i}-\psi_{j})^{2}+(x_{i}-x_{j})^{2}}\right)^{\frac{N-2}{N(N-1)}}M(X-t\psi(X, U), \psi(X, U)) = \vspace{2mm} \\
M(X, \psi(X, U)),
\end{array}
\end{equation*}
whence, owing to the fact that $\Psi$ is a homeomorphism, it holds that
\begin{equation}\label{readytobesimplified}
\left(\prod_{i<j}\frac{(v_{i}-v_{j})^{2}+(x_{i}-tv_{i}-x_{j}+tv_{j})^{2}}{(v_{i}-v_{j})^{2}+(x_{i}-x_{j})^{2}}\right)^{\frac{N-2}{N(N-1)}}M(X-tV, V)=M(X, V)
\end{equation}
for all $V\in F_{X}$ and $X\in\mathcal{Q}^{\circ}$. It is at this point we acknowledge that the Hausdorff measure $\mathscr{H}$ cannot be an invariant measure for any billiard flow on $\mathcal{M}$. We record this observation in the following Lemma. 
\begin{lem}
Let $\{T^{t}\}_{t\in\mathbb{R}}$ be a billiard flow on $\mathcal{M}$. For $t\in\mathbb{R}\setminus\{0\}$, it holds that $T^{t}\#\mathscr{H}\neq \mathscr{H}$.    
\end{lem}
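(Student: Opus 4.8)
The plan is to exhibit a single test function against which the invariance identity fails, localised entirely to the free-shear region $\mathcal{M}_t^-$. This is the natural place to work because, by the representation formulae of Section~\ref{reppy}, \emph{every} billiard flow acts on $\mathcal{M}_t^-$ as the same $\sigma$-independent shear $Z\mapsto\big(I_{2N}+t\sum_{i=1}^{2N}e_i^{2N}\otimes e_{i+N}^{2N}\big)Z$; hence a contradiction produced there rules out invariance of $\mathscr{H}$ for all billiard flows simultaneously. Fix $t\in\mathbb{R}\setminus\{0\}$, say $t>0$ (the case $t<0$ is entirely analogous). First I would reuse the change-of-variables chain of Section~\ref{derifunky} verbatim, but with the constant density $M\equiv 1$, i.e. with $\mu=\mathscr{H}$. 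Pushing that computation through Lemma~\ref{densitycomp1} gives, for any $\Phi\in C^0_c(\mathbb{R}^{2N},\mathbb{R})$ whose support lies in $\mathcal{M}_t^-\cap T^t_\sigma(\mathcal{M}_t^-)$,
\[
\langle T^t_\sigma\#\mathscr{H},\Phi\rangle=\int_{\mathcal{U}_t^-}\Phi(\Psi(\zeta))\left(\frac{(u_1-u_2)^2+(x_1-tu_1-x_2+tu_2)^2}{(u_1-u_2)^2+(x_1-x_2)^2}\right)^{\frac{N-2}{2}}\omega(\zeta)\,d\mathscr{L}(\zeta),
\]
whereas $\langle\mathscr{H},\Phi\rangle=\int_{\mathcal{U}_t^-}\Phi(\Psi(\zeta))\omega(\zeta)\,d\mathscr{L}(\zeta)$. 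The stipulation $\mathrm{supp}(\Phi)\subset\mathcal{M}_t^-\cap T^t_\sigma(\mathcal{M}_t^-)$ guarantees that the contribution to the pushforward coming from $\mathcal{M}_t^+$ vanishes, so that no information about the scattering map $\sigma$ enters the two expressions.

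Next I would simplify the bracketed ratio using the defining property of $\mathcal{M}$. Every $Z=(X,V)\in\mathcal{M}^\circ$ satisfies $X+\tau(Z)V=c\mathbf{1}$ for the common collision time $\tau=\tau(Z)$, so $x_i-x_j=-\tau(v_i-v_j)$ for all $i<j$; in particular $x_1-x_2=-\tau(u_1-u_2)$ on the chart. Substituting $x_1-tu_1-x_2+tu_2=-(\tau+t)(u_1-u_2)$ and $x_1-x_2=-\tau(u_1-u_2)$, the ratio collapses to $\frac{1+(\tau+t)^2}{1+\tau^2}$, whence the coefficient above equals $\big(\tfrac{1+(\tau+t)^2}{1+\tau^2}\big)^{\frac{N-2}{2}}$. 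For $N\ge 3$ the exponent $\tfrac{N-2}{2}$ is strictly positive, so this coefficient equals $1$ precisely when $(\tau+t)^2=\tau^2$, i.e. when $\tau=-t/2$.

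To close the argument I would pick a base point $Z_\star\in\mathcal{M}^\circ$ whose collision time is large and negative, say $\tau(Z_\star)=-10t$. Then $\tau(Z_\star)<0$ places $Z_\star\in\mathcal{M}_t^-$, and a free-flight computation gives $\tau(T^{-t}_\sigma Z_\star)=\tau(Z_\star)+t=-9t<0$, so that $Z_\star\in T^t_\sigma(\mathcal{M}_t^-)$ as well. By continuity a small neighbourhood $\mathcal{N}\ni Z_\star$ lies in $\mathcal{M}_t^-\cap T^t_\sigma(\mathcal{M}_t^-)$ with $\tau<-t/2$ throughout, so the quantity $\big(\tfrac{1+(\tau+t)^2}{1+\tau^2}\big)^{\frac{N-2}{2}}-1$ keeps a strict (negative) sign on $\mathcal{N}$. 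Choosing $\Phi\ge 0$ supported in $\mathcal{N}$ with $\Phi(Z_\star)>0$ then forces $\langle T^t_\sigma\#\mathscr{H},\Phi\rangle\neq\langle\mathscr{H},\Phi\rangle$, and therefore $T^t_\sigma\#\mathscr{H}\neq\mathscr{H}$; since $\mathcal{M}_t^-$ carries the $\sigma$-independent flow, this holds for every billiard flow on $\mathcal{M}$.

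The main obstacle is the bookkeeping that isolates the free-shear region: one must verify that the chosen support meets neither $T^t_\sigma(\mathcal{M}_t^+)$ nor, through the pushforward, its pre-image, so that the scattering-dependent behaviour over $\mathcal{M}_t^+$ cannot silently conspire to restore the equality. Once the localisation to $\mathcal{M}_t^-\cap T^t_\sigma(\mathcal{M}_t^-)$ is secured (which is why the neighbourhood $\mathcal{N}$ of a point with $\tau(Z_\star)$ and $\tau(T^{-t}_\sigma Z_\star)$ both negative is constructed explicitly), the remainder is the algebraic collapse of the coefficient afforded by the single-collision-time constraint, which is routine.
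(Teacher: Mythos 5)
Your proposal is correct and follows essentially the same route as the paper, whose entire proof consists of setting the density $M$ to unity in the invariance identity \eqref{readytobesimplified} and observing that the resulting Jacobian factor is not identically $1$ for $t\neq 0$. You additionally make explicit what the paper leaves implicit — the collapse of the factor to $\bigl(\tfrac{1+(\tau+t)^{2}}{1+\tau^{2}}\bigr)^{\frac{N-2}{2}}$ via the collision-time constraint, and the construction of a test function supported where this factor stays strictly away from $1$ and away from $T^{t}_{\sigma}(\mathcal{M}_{t}^{+})$ — which supplies the necessity direction the paper's one-line argument glosses over.
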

\begin{proof}
This follows from identity \eqref{readytobesimplified} above by setting the density $M$ to be unity.  
\end{proof}
It is now possible to simplify the structure of the functional equation \eqref{readytobesimplified} above. Indeed, defining a new measurable map $m:\sqcup_{X\in\mathcal{Q}^{\circ}}F_{X}\rightarrow\mathbb{R}$ pointwise by
\begin{equation*}
m(X, V):=\left(\prod_{i<j}(v_{i}-v_{j})^{2}+(x_{i}-x_{j})^{2}\right)^{\frac{N-2}{N(N-1)}}M(X, V),    
\end{equation*}
it holds that $\langle T^{t}_{\sigma}\#\mu, \Phi\rangle=\langle\mu, \Phi\rangle$ for all $\Phi\in C^{0}_{c}(\mathbb{R}^{2N}, \mathbb{R})$ admitting the support condition $\mathscr{H}(\mathrm{supp}(\Phi)\cap\mathcal{M}_{t}^{-})>0$ if $m$ satisfies the functional equation
\begin{equation}\label{m0funky}
m(X-tV, V)=m(X, V)    
\end{equation}
for all $V\in F_{X}$ and all $t\in\mathbb{R}$. We shall solve this functional equation in the following Section.
\subsection{Solving the Functional Equation}\label{solvefunky}
It may appear at first glance that the functional equation \eqref{m0funky} is trivial. However, this is not the case due to the algebraic relationships between $X$ and $V$, owing to the fact that $Z=(X, V)$ lies in a non-trivial algebraic subvariety of $\mathbb{R}^{2N}$. As we do not seek to characterise all invariant measures which are absolutely continuous with respect to the Liouville measure $\lambda$, rather seeking only sufficient conditions on their densities, we assume in what follows that the density $M$ (and, in turn, $m$) is defined on the whole space $\mathbb{R}^{2N}$ as opposed to simply on the subset $\sqcup_{X\in\mathcal{Q}^{\circ}}F_{X}$. We now record the following Proposition.
\begin{prop}
Let $N\geq 3$. Suppose that $m:\mathbb{R}^{2N}\rightarrow\mathbb{R}$ is a measurable map satisfying the functional equation
\begin{equation*}
m(X-tV, V)=m(X, V)    
\end{equation*}
for all $V\in F_{X}$ and all $X\in\mathcal{Q}^{\circ}$. It follows that $m$ is necessarily of the form
\begin{equation*}
m(X, V)=m_{0}(X\cdot (\mathsf{E}(V)\mathbf{1}-\mathsf{P}(V)V), V)    
\end{equation*}
for some measurable map $m_{0}:\mathbb{R}\times\mathbb{R}^{N}\rightarrow\mathbb{R}$ and for all $V\in F_{X}$ and $X\in\mathcal{Q}^{\circ}$, where $\mathsf{P}$ is the linear momentum map \eqref{linmommap} and $\mathsf{E}$ is the energy map \eqref{enmap} above.
\end{prop}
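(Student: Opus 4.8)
The plan is to exploit the algebraic constraint defining $\mathcal{M}$ to reduce the functional equation, fibre by fibre in $V$, to a statement about a one-parameter shear flow on a \emph{two}-dimensional plane. First I would fix $V$ and examine the set of admissible positions $\{X:(X,V)\in\mathcal{M}\}$. By the definition of $\mathcal{M}$, membership $(X,V)\in\mathcal{M}$ is equivalent to $X+tV=c\mathbf{1}$ for some $t,c\in\mathbb{R}$, that is, to $X\in\mathrm{span}\{\mathbf{1},V\}$. Since $v_{1}\neq v_{2}$ on $\mathcal{M}^{\circ}$ (by the chart, $\psi_{1}=u_{1}\neq u_{2}=\psi_{2}$), the vectors $\mathbf{1}$ and $V$ are linearly independent, so the admissible $X$ all lie in the plane $P_{V}:=\mathrm{span}\{\mathbf{1},V\}$, with global coordinates $(c,s)$ via $X=c\mathbf{1}-sV$. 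The flow $X\mapsto X-tV$ appearing in the functional equation acts on these coordinates simply as $s\mapsto s+t$ while leaving $c$ fixed; hence its orbits on $P_{V}$ are exactly the level sets of $c$, and the quotient by the flow is one-dimensional.

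Next I would identify the surviving invariant explicitly. Setting $w:=\mathsf{E}(V)\mathbf{1}-\mathsf{P}(V)V$, a one-line computation using $V\cdot\mathbf{1}=\mathsf{P}(V)$ and $V\cdot V=\mathsf{E}(V)$ gives $V\cdot w=\mathsf{E}(V)\mathsf{P}(V)-\mathsf{P}(V)\mathsf{E}(V)=0$, so the scalar $X\mapsto X\cdot w$ is constant along the flow. Evaluating it at $X=c\mathbf{1}-sV$ yields $X\cdot w=c\,(\mathbf{1}\cdot w)$, where $\mathbf{1}\cdot w=N\mathsf{E}(V)-\mathsf{P}(V)^{2}=\sum_{i<j}(v_{i}-v_{j})^{2}$. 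On $\mathcal{M}^{\circ}$ the velocities are not all equal, so this coefficient is strictly positive; hence $c$ is recovered from $X\cdot w$ by division, and $X\cdot w\leftrightarrow c$ is a bijection on each fibre. Consequently the single scalar $X\cdot w$ separates the orbits of the flow on $P_{V}$ exactly.

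Combining these observations finishes the argument. Since the functional equation $m(X-tV,V)=m(X,V)$ holds for all $t\in\mathbb{R}$ whenever $X\in\mathcal{Q}^{\circ}$ and $V\in F_{X}$, the function $m(\cdot,V)$ is constant on each flow orbit in $P_{V}$, hence constant on each level set of $X\cdot w$. To package this as a function of $X\cdot w$ alone, I would define $m_{0}$ by the measurable selection
\[
m_{0}(y,V):=m\!\left(\frac{y}{\,N\mathsf{E}(V)-\mathsf{P}(V)^{2}\,}\,\mathbf{1}-V,\;V\right),
\]
which picks the representative with $s=1$; applying the functional equation with $t=1-s$ to slide any given $X=c\mathbf{1}-sV$ to this representative (and noting $c=(X\cdot w)/(N\mathsf{E}(V)-\mathsf{P}(V)^{2})$) yields $m(X,V)=m_{0}(X\cdot w,V)$, as required. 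Measurability of $m_{0}$ is inherited from that of $m$ together with continuity of $V\mapsto(N\mathsf{E}(V)-\mathsf{P}(V)^{2})^{-1}$ on the set where the velocities are not all equal.

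I expect the main conceptual obstacle to be the first step: recognising that the constraint $(X,V)\in\mathcal{M}$ pins $X$ to the two-plane $\mathrm{span}\{\mathbf{1},V\}$, which collapses what would otherwise be an $(N-1)$-dimensional space of flow-invariants (the quantities $x_{i}v_{j}-x_{j}v_{i}$) down to a single scalar. Once this reduction is in hand, verifying that $X\cdot w$ is precisely that invariant, together with its nondegeneracy $\sum_{i<j}(v_{i}-v_{j})^{2}>0$, is a short computation, and the measurable selection defining $m_{0}$ is routine.
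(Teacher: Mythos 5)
Your proposal is correct and follows essentially the same route as the paper: both reduce the problem to the two-dimensional plane of admissible positions for fixed $V$, identify $X\cdot(\mathsf{E}(V)\mathbf{1}-\mathsf{P}(V)V)$ as the orbit invariant via its orthogonality to $V$, and define $m_{0}$ by a measurable selection of orbit representatives. The only difference is cosmetic: you obtain the plane $\mathrm{span}\{\mathbf{1}, V\}$ directly from the definition of $\mathcal{M}$, whereas the paper arrives at the same plane through the chart basis $\{\gamma_{1}, \gamma_{2}\}$ followed by Gram--Schmidt.
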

\begin{proof}
We begin by noting that $V\in F_{X}$ if and only if $X\in G_{V}$, where $G_{V}\subset\mathcal{Q}$ is given by
\begin{equation*}
G_{V}:=\left\{y_{1}\left(e_{1}^{N}+\sum_{i=1}^{3}\frac{v_{i}-v_{2}}{v_{1}-v_{2}}e_{i}^{N}\right)+y_{2}\left(e_{2}^{N}-\sum_{j=3}^{N}\frac{v_{j}-v_{1}}{v_{1}-v_{2}}e_{j}^{N}\right)\,:\,y_{1}\leq y_{2}\right\}.    
\end{equation*}
As a consequence, the invariant manifold $\mathcal{M}$ may be written in bundle notation by `switching' the role of the spatial and velocity variables as
\begin{equation*}
\mathcal{M}=\bigsqcup_{V\in\mathcal{V}}G_{V},    
\end{equation*}
where $\mathcal{V}:=\mathcal{V}^{-}\cup\mathcal{V}^{+}$. Manifestly, $G_{V}$ is a subset of the 2-dimensional real vector space $\mathcal{G}_{V}$ given by
\begin{equation*}
\mathcal{G}_{V}:=\mathrm{span}\left\{
e_{1}^{N}+\sum_{i=1}^{3}\frac{v_{i}-v_{2}}{v_{1}-v_{2}}e_{i}^{N}, e_{2}^{N}-\sum_{i=3}^{N}\frac{v_{i}-v_{1}}{v_{1}-v_{2}}e_{i}^{N}\right\}\subset\mathbb{R}^{N}.    
\end{equation*}
Indeed, let us subsequently denote the vectors in the above span by
\begin{equation*}
\gamma_{1}:=e_{1}^{N}+\sum_{i=1}^{3}\frac{v_{i}-v_{2}}{v_{1}-v_{2}}e_{i}^{N}    
\end{equation*}
and 
\begin{equation*}
\gamma_{2}:=e_{2}^{N}-\sum_{i=3}^{N}\frac{v_{i}-v_{1}}{v_{1}-v_{2}}e_{i}^{N}.    
\end{equation*}
Let $V\in\mathcal{V}$ be fixed. By defining the measurable auxiliary map $m_{V}:\mathcal{Q}\rightarrow\mathbb{R}$ pointwise by
\begin{equation*}
m_{V}(X):=m(X, V),    
\end{equation*}
the functional equation may be rewritten as
\begin{equation}\label{rewrite}
m_{V}(X-tV)=m_{V}(X).    
\end{equation}
As a result, we interpret the functional equation rather as a $V$-parametrised functional equation for an unknown function $m_{V}$ defined on $G_{V}$. For this given $V$, it holds that if $X\in G_{V}$ then $X-tV\in G_{V}$ for all suitable $t\in\mathbb{R}$. Thus, the value of $m_{V}(X)$ depends only on the component of $X$ which lies in the orthogonal complement of $\mathrm{span}\{V\}$ in $\mathcal{G}_{V}$.

It holds that $V\in G_{V}$ owing to the fact that $V=v_{1}\gamma_{1}+v_{2}\gamma_{2}$. By employing the Gram-Schmidt algorithm to the basis $\{v_{1}\gamma_{1}+v_{2}\gamma_{2}, v_{1}\gamma_{1}-v_{2}\gamma_{2}\}$ of $\mathcal{G}_{V}$, the reader may verify that
\begin{equation*}
\mathcal{G}_{V}=\mathrm{span}\left\{V, \mathsf{E}(V)\mathbf{1}-\mathsf{P}(V)V\right\}.    
\end{equation*}
As such, given $X\in \mathcal{G}_{V}$, it holds that 
\begin{equation}\label{basisexpansion}
X=aV+b(\mathsf{E}(V)\mathbf{1}-\mathsf{P}(V)V)
\end{equation}
for some $a, b\in\mathbb{R}$ which depend on $X$. Thus, it follows from the $V$-parametrised functional equation \eqref{rewrite} that
\begin{equation*}
m_{V}(aV+b(\mathsf{E}(V)\mathbf{1}-\mathsf{P}(V)V))=m_{V}((a-t)V+b(\mathsf{E}(V)\mathbf{1}-\mathsf{P}(V)V))    
\end{equation*}
for all $t$. However, by taking $t=a$, it follows immediately from the above that
\begin{equation}\label{finalinfer}
m_{V}(X)=m_{V}(b(\mathsf{E}(V)\mathbf{1}-\mathsf{P}(V)V)).    
\end{equation}
From the identity \eqref{basisexpansion} above, it follows that $b$ is given explictly in terms of $X$ by
\begin{equation*}
b=\frac{X\cdot (\mathsf{E}(V)\mathbf{1}-\mathsf{P}(V)V)}{|\mathsf{E}(V)\mathbf{1}-\mathsf{P}(V)V|^{2}}.    
\end{equation*}
Thus, defining the measurable function $m_{0}:\mathbb{R}\times\mathcal{V}\rightarrow\mathbb{R}$ pointwise by
\begin{equation*}
m_{0}(x, V):=m_{V}\left(\frac{x}{|\mathsf{E}(V)\mathbf{1}-\mathsf{P}(V)V|^{2}}\left(\mathsf{E}(V)\mathbf{1}-\mathsf{P}(V)V\right)\right)  
\end{equation*}
for all $x\in\mathbb{R}$ and $V\in\mathcal{V}$, it follows in turn from \eqref{finalinfer} that
\begin{equation*}
m(X, V)=m_{0}(X\cdot (\mathsf{E}(V)\mathbf{1}-\mathsf{P}(V)V), V)   
\end{equation*}
as claimed in the statement of the Proposition.
\end{proof}
We now have that the measure $\mu=M\mathscr{H}$ satisfies the identity
\begin{equation*}
\langle T^{t}_{\sigma}\#\mu, \Phi \rangle=\langle\mu, \Phi\rangle    
\end{equation*}
for all $\Phi$ satisfying the support condition $\mathscr{H}(\mathrm{supp}(\Phi)\cap\mathcal{M}_{t}^{-})>0$ if the density $M$ is of the form
\begin{equation*}
M(X, V)=\left(\prod_{i<j}((v_{i}-v_{j})^{2}+(x_{i}-x_{j})^{2})\right)^{-\frac{N-2}{N(N-1)}}m_{0}(X\cdot (\mathsf{E}(V)\mathbf{1}-\mathsf{P}(V))V, V)    
\end{equation*}
for some measurable function $m_{0}$. Moving forward, we define the density $L$ pointwise by
\begin{equation*}
L(X, V):=\left(\prod_{i<j}((v_{i}-v_{j})^{2}+(x_{i}-x_{j})^{2})\right)^{-\frac{N-2}{N(N-1)}}.    
\end{equation*}
We shall use this result in Section \ref{furthersym} below when we come to derive a PDE that the scattering map $\sigma$ of a billiard flow $\{T^{t}_{\sigma}\}_{t\in\mathbb{R}}$ satisfies if it is to admit $\lambda:=L\mathscr{H}$ (and, in turn, $\mu$) as an invariant measure. 
\section{Analysis over the Region $\mathcal{M}_{t}^{+}$}\label{emmplus}
In this Section, we consider the properties of the billiard maps $T^{t}_{\sigma}$ over the region $\mathcal{M}_{t}^{+}$ in order to derive a Jacobian equation that any smooth scattering map $\sigma$ satisfies on the interior of $\mathcal{V}^{-}$. In addition, we derive an additional symmetry condition on the density $M$ which guarantees that the associated measure $\mu$ is invariant under the flow $\{T^{t}_{\sigma}\}_{t\in\mathbb{R}}$ generated by $\sigma$. We consider only the case that $t>0$ in what follows, as we do not seek to derive the second boundary-value problem that the inverse $N$-body scattering map $\sigma^{-1}$ satisfies owing to assumption \ref{Ass} above.
\subsection{Further Auxiliary Maps and Identities}\label{furtheraux}
Suppose now that $\Phi\in C^{0}_{c}(\mathbb{R}^{2N}, \mathbb{R})$ is such that $\mathscr{H}(\mathrm{supp}(\Phi)\cap\mathcal{M}_{t}^{+})>0$. As was the case when considering the integral \eqref{inttofind} above, when considering the new integral given by
\begin{equation}\label{intofinterestpos}
\int_{T^{-t}_{\sigma}\circ\Psi(\mathcal{U}_{t}^{+})}(\Phi\circ T^{t}_{\sigma}\circ\Psi)(M\circ \Psi)\omega\,d\mathscr{L}, 
\end{equation}
elimination of the composition $T^{t}_{\sigma}\circ\Psi$ from the argument of $\Phi$ is once again complicated by the fact that the dimension of the range of this composition is greater than its range. As such, in analogy with Section \ref{emmteeminus} above we also define the \emph{reduced flow map} $\widetilde{T}^{t}_{\sigma}$ pointwise by
\begin{equation*}
\begin{array}{c}
\displaystyle \widetilde{T}^{t}_{\sigma}(\zeta):=\sum_{i=1}^{N}e^{N+2}_{i}\otimes e^{N+2}_{i}\zeta+\tau(\zeta)e^{N+2}_{i}\otimes e^{N}_{i}\psi(\zeta) +(t-\tau(\zeta))e^{N+2}_{i}\otimes e^{N}_{i}\sigma(\psi(\zeta)) \vspace{2mm} \\
\displaystyle +\sum_{j=1}^{2}e^{N+2}_{N+j}\otimes e^{N}_{j}\sigma(\psi(\zeta))
\end{array}
\end{equation*}
for $\zeta\in \mathcal{U}_{t}^{+}$. We note the fact that $\widetilde{T}_{\sigma}^{t}$ is a diffeomorphism with inverse map given pointwise by
\begin{equation*}
(\widetilde{T}_{\sigma}^{t})^{-1}(\zeta)=\widetilde{T}_{\sigma}^{-t}(\zeta).    
\end{equation*}
In this case, the reduced flow map is more complicated in structure and it depends explicitly on the scattering map $\sigma$. Unlike the previous section, we also require a \emph{residual flow map} $\overline{T}_{\sigma}$ given pointwise by
\begin{equation*}
\overline{T}_{\sigma}(\zeta):=\sum_{j=3}^{N}e^{2N}_{N+j}\otimes e_{j}^{N}\sigma(\psi(\zeta))    
\end{equation*}
for $\zeta\in\mathcal{U}_{t}^{+}$. For the purposes of applying the Change of Variables Theorem in the integral \eqref{intofinterestpos}, we require the following result on the structure of the Jacobian of this reduced flow map $\widetilde{T}^{t}_{\sigma}$.
\begin{lem}\label{sigmadet}
Suppose $N\geq 3$ and $t>0$. It holds that
\begin{equation*}
\begin{array}{c}
\displaystyle \mathrm{det}(D\widetilde{T}^{t}_{\sigma}(\zeta))= \vspace{2mm} \\
\displaystyle \frac{\sigma_{1}(\psi(\zeta))-\sigma_{2}(\psi(\zeta))}{u_{1}-u_{2}}\left(\frac{x_{1}+tu_{1}-x_{2}-tu_{2}}{x_{1}-x_{2}}\right)^{N-2}\mathrm{det}(D\sigma(\psi(\zeta)))    
\end{array}
\end{equation*}
for all $\zeta\in(\mathcal{U}_{t}^{+})^{\circ}$.
\end{lem}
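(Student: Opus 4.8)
The plan is to \emph{linearise} $\widetilde T^t_\sigma$ by passing to coordinates adapted to the collision event, so that the scattering decouples and $\det(D\sigma)$ drops out cleanly. First I would rewrite the position components. In the formula for $\widetilde T^t_\sigma(\zeta)$ one has $Y_i = x_i + \tau(\zeta)\psi_i(\zeta) + (t-\tau(\zeta))\sigma_i(\psi(\zeta))$ for $i\in\{1,\dots,N\}$, together with $Y_{N+1}=\sigma_1(\psi(\zeta))$ and $Y_{N+2}=\sigma_2(\psi(\zeta))$. Since $\zeta\in\mathcal U_t^{+}$ encodes data that collide at a common point at time $\tau(\zeta)$, the quantity $x_i+\tau(\zeta)\psi_i(\zeta)=:c$ is independent of $i$, with $c=x_1+\tau(\zeta)u_1$; hence $Y_i = c + (t-\tau(\zeta))\sigma_i(\psi(\zeta))$.

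Next I would introduce the change of coordinates $\Lambda:\Omega\to\mathbb R^{N+2}$ defined by $\Lambda(\zeta):=(\tau(\zeta),\,c(\zeta),\,\psi_1(\zeta),\dots,\psi_N(\zeta))$, that is, the collision time, the collision position, and the full pre-collisional velocity $p:=\psi(\zeta)$. One checks that $\Lambda$ is a local diffeomorphism wherever $u_1\neq u_2$ by writing down its inverse explicitly (recovering $u_1=p_1$, $u_2=p_2$, then $x_2-x_1=\tau(p_1-p_2)$, then $x_1=c-\tau p_1$, and finally each $x_m$ from the defining relation for $\psi_m$). The heart of the argument is that in these coordinates $\widetilde T^t_\sigma$ becomes the decoupled map $\widehat T:(\tau,c,p)\mapsto(\tau-t,\,c,\,\sigma(p))$. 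This rests on two facts about the image $\zeta':=\widetilde T^t_\sigma(\zeta)$: its collision position is unchanged, $c'=c$, and its collision time is shifted, $\tau'=\tau-t$ (the collision is a single spacetime event); and its reconstructed velocity is $\psi(\zeta')=\sigma(p)$, because the post-collisional state $(\Pi_1(\Psi(\zeta')),\sigma(p))$ lies in $\mathcal M$, so the chart returns $\sigma(p)$ from its first two velocity components. Here I would reuse the identity, implicit in Section~\ref{chartinfo}, that $\psi$ returns $V$ from the data $(X,v_1,v_2)$ whenever $(X,V)\in\mathcal M$.

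With this in place the chain rule applied to $\widetilde T^t_\sigma=\Lambda^{-1}\circ\widehat T\circ\Lambda$ gives
\[
\det(D\widetilde T^t_\sigma(\zeta))=\frac{\det(D\Lambda(\zeta))}{\det(D\Lambda(\zeta'))}\,\det(D\widehat T)=\frac{\det(D\Lambda(\zeta))}{\det(D\Lambda(\zeta'))}\,\det(D\sigma(\psi(\zeta))),
\]
since $\widehat T$ is block-diagonal with blocks $1$, $1$ and $D\sigma(p)$. It then remains to compute $\det(D\Lambda)$. Ordering the output of $\Lambda$ as $(\psi_1,\dots,\psi_N,\tau,c)$, the columns indexed by $x_3,\dots,x_N$ each carry a single nonzero entry $\partial\psi_m/\partial x_m=(u_1-u_2)/(x_1-x_2)$; expanding along them extracts the factor $\big((u_1-u_2)/(x_1-x_2)\big)^{N-2}$ and leaves a $4\times4$ minor in $(x_1,x_2,u_1,u_2)$. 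In that minor the rows $\psi_1=u_1$, $\psi_2=u_2$ supply an identity block in the $(u_1,u_2)$ columns, and a Laplace expansion reduces the determinant to the $2\times2$ block in the rows $\tau,c$ and columns $x_1,x_2$, which evaluates to $-1/(u_1-u_2)$. Hence $\det(D\Lambda(\zeta))=-\tfrac{1}{u_1-u_2}\big(\tfrac{u_1-u_2}{x_1-x_2}\big)^{N-2}$, up to a combinatorial sign independent of $\zeta$.

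Finally I would evaluate the same expression at $\zeta'$, using $u_1'-u_2'=\sigma_1(p)-\sigma_2(p)$ and $x_1'-x_2'=(t-\tau)(\sigma_1(p)-\sigma_2(p))$, and form the ratio. The combinatorial sign cancels, and simplifying with $(u_1-u_2)/(x_1-x_2)=-1/\tau$ together with $(\tau-t)/\tau=(x_1+tu_1-x_2-tu_2)/(x_1-x_2)$ recovers exactly the stated formula. The main obstacle, and the reason a brute-force reduction of $D\widetilde T^t_\sigma$ is unpleasant, is the entanglement of $\tau$, $\psi$ and $\sigma$, with $\sigma$ pre-composed with the non-square differential $D\psi$; the device that resolves it is the coordinate system $(\tau,c,p)$, which simultaneously linearises the collision dynamics and decouples the scattering so that the full factor $\det(D\sigma)$ emerges intact while the two geometric prefactors appear as a ratio of chart Jacobians.
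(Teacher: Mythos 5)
Your argument is correct, and it reaches the stated formula by a genuinely different route from the paper. The paper's proof is a frontal computation: it writes out $D\widetilde{T}^{t}_{\sigma}$ explicitly as an $(N+2)\times(N+2)$ block matrix and reduces it to block lower-triangular form by elementary row and column operations, reading the determinant off the diagonal. You instead conjugate $\widetilde{T}^{t}_{\sigma}$ by the collision-adapted chart $\Lambda=(\tau,c,\psi)$, under which the flow map decouples into $(\tau,c,p)\mapsto(\tau-t,c,\sigma(p))$, so that $\mathrm{det}(D\widetilde{T}^{t}_{\sigma})$ factors as $\mathrm{det}(D\sigma)$ times a ratio of Jacobians of $\Lambda$; each of those Jacobians is elementary because $\Lambda^{-1}(\tau,c,p)=(c-\tau p_{1},\dots,c-\tau p_{N},p_{1},p_{2})$ is explicit. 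I checked the supporting identities: $x_{i}+\tau(\zeta)\psi_{i}(\zeta)$ is indeed independent of $i$ on $\mathcal{M}^{\circ}$; for $\zeta'=\widetilde{T}^{t}_{\sigma}(\zeta)$ one has $\tau(\zeta')=\tau(\zeta)-t$, $c(\zeta')=c(\zeta)$ and $\psi(\zeta')=\sigma(\psi(\zeta))$ because $(\Pi_{1}(\Psi(\zeta')),\sigma(p))$ lies in $\mathcal{M}$ and the chart is consistent with $\mathcal{M}$-membership; the $2\times 2$ minor in the rows $\tau,c$ and columns $x_{1},x_{2}$ does evaluate to $-1/(u_{1}-u_{2})$; the fixed combinatorial sign cancels in the ratio; and the conversions $(u_{1}-u_{2})/(x_{1}-x_{2})=-1/\tau$ and $(\tau-t)/\tau=(x_{1}+tu_{1}-x_{2}-tu_{2})/(x_{1}-x_{2})$ reproduce the prefactor with the correct sign. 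Your approach buys conceptual transparency -- it explains \emph{why} $\mathrm{det}(D\sigma)$ emerges as a clean multiplicative factor and why the prefactor is a ratio of quantities evaluated before and after collision -- at the cost of having to verify that $\Lambda$ really conjugates the flow to the decoupled map; the paper's computation is longer but produces the explicit block structure of $D\widetilde{T}^{t}_{\sigma}$ in a style uniform with the analysis over $\mathcal{M}_{t}^{-}$. Two points are worth making explicit in a final write-up: that $\zeta'\in\Omega$ (so $\Lambda$ is defined and invertible at $\zeta'$), which follows from $0<\tau(\zeta)<t$ and $\sigma(p)\in(\mathcal{V}^{+})^{\circ}$, and that the restriction to $(\mathcal{U}_{t}^{+})^{\circ}$ is what licenses the use of the chain rule, since $\sigma$ is only assumed $C^{1}$ on the interior.
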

\begin{proof}
It can be shown that the derivative matrix $D\widetilde{T}^{t}_{\sigma}:(\mathcal{U}_{t}^{+})^{\circ}\rightarrow\mathbb{R}^{(N+2)\times (N+2)}$ is given block-wise by
\begin{equation*}
D\widetilde{T}^{t}_{\sigma}(\zeta)=
\left(
\begin{array}{cc}
\begin{array}{cc}
A(\zeta, t) & B(\zeta, t)
\end{array} & C(\zeta, t) \vspace{2mm}\\
D(\zeta, t) & E(\zeta) \vspace{2mm}\\
\begin{array}{cc}
F(\zeta, t) & G(\zeta, t)
\end{array} & H(\zeta, t)
\end{array}
\right)^{T}\in\mathbb{R}^{(N+2)\times (N+2)}
\end{equation*}
for $\zeta\in(\mathcal{U}_{t}^{+})^{\circ}$ and $t\in\mathbb{R}$, where the matrix-valued map $A(\cdot, t):(\mathcal{U}_{t}^{+})^{\circ}\rightarrow\mathbb{R}^{2\times 2}$ is defined pointwise by
\begin{equation*}
\begin{array}{c}
\displaystyle A_{i, j}(\zeta, t):= \vspace{2mm}\\
\displaystyle \delta_{i,j}+(-1)^{i}\left(\frac{u_{j}-\sigma_{j}(\psi(\zeta))}{u_{1}-u_{2}}+\frac{t-\tau(\zeta)}{\tau(\zeta)}\sum_{k=3}^{N}(x_{1}-x_{2})^{-1}\left(\sum_{\ell=1}^{2}\widetilde{\delta}_{i, \ell}x_{\ell}-x_{k}\right)\frac{\partial\sigma_{j}}{\partial v_{k}}(\psi(\zeta))\right),
\end{array}
\end{equation*}
the map $B(\cdot, t):(\mathcal{U}_{t}^{+})^{\circ}\rightarrow\mathbb{R}^{2\times (N-2)}$ is given by
\begin{equation*}
\begin{array}{c}
\displaystyle B_{i,j}(\zeta, t):= \vspace{2mm}\\
\displaystyle (-1)^{i}\left((u_{1}-u_{2})^{-1}\sum_{\ell=1}^{2}\widetilde{\delta}_{i, \ell}u_{\ell}-\sigma_{j+2}+\frac{t-\tau(\zeta)}{\tau(\zeta)}\sum_{k=3}^{N}(x_{1}-x_{2})^{-1}\left(\sum_{\ell=1}^{2}\widetilde{\delta}_{i\ell}x_{\ell}-x_{k}\right)\frac{\partial\sigma_{j+2}}{\partial v_{k}}(\psi(\zeta))\right),
\end{array}
\end{equation*}
the map $C:(\mathcal{U}_{t}^{+})^{\circ}\rightarrow\mathbb{R}^{2\times 2}$ is given by
\begin{equation*}
C_{i,j}(\zeta):=(-1)^{i}\frac{1}{\tau(\zeta)}\sum_{k=3}^{N}(x_{1}-x_{2})^{-1}\left(\sum_{\ell=1}^{2}\widetilde{\delta}_{i\ell}x_{\ell}-x_{k}\right)\frac{\partial\sigma_{j}}{\partial v_{k}},
\end{equation*}
the map $D(\cdot, t):(\mathcal{U}_{t}^{+})^{\circ}\rightarrow\mathbb{R}^{(N-2)\times N}$ is given by
\begin{equation*}
D_{i,j}(\zeta, t):=\frac{x_{1}+tu_{1}-x_{2}-tu_{2}}{x_{1}-x_{2}}\frac{\partial\sigma_{j}}{\partial v_{i+2}}(\psi(\zeta))
\end{equation*}
the map $E:(\mathcal{U}_{t}^{+})^{\circ}\rightarrow\mathbb{R}^{(N-2)\times 2}$ is given by
\begin{equation*}
E_{i,j}(\zeta):=-\frac{1}{\tau(\zeta)}\frac{\partial\sigma_{j}}{\partial v_{i+2}}(\psi(\zeta)),
\end{equation*}
the map $F(\cdot, t):(\mathcal{U}_{t}^{+})^{\circ}\rightarrow\mathbb{R}^{2\times 2}$ is given by
\begin{equation*}
\begin{array}{c}
\displaystyle F_{i,j}(\zeta, t):=\vspace{2mm}\\
\displaystyle \tau(\zeta)\delta_{i, j}+(-1)^{i}\left(\frac{1}{\tau(\zeta)}\frac{u_{j}-\sigma_{j}}{u_{1}-u_{2}}+(t-\tau(\zeta))\sum_{k=1}^{N}(x_{1}-x_{2})^{-1}\left(\sum_{\ell=1}^{2}\widetilde{\delta}_{i, \ell}x_{\ell}-x_{k}\right)\frac{\partial\sigma_{j}}{\partial v_{k}}(\psi(\zeta))\right),
\end{array}
\end{equation*}
the map $G(\cdot, t):(\mathcal{U}_{t}^{+})^{\circ}\rightarrow\mathbb{R}^{2\times (N-2)}$ is given by
\begin{equation*}
\begin{array}{c}
\displaystyle G_{i,j}(\zeta, t):= \vspace{2mm}\\
\displaystyle (-1)^{i+1}(u_{1}-u_{2})^{-1}\left(\sum_{\ell=1}^{2}\widetilde{\delta}_{i, \ell}x_{\ell}-x_{j}-\tau(\zeta)\psi_{j}(\zeta)\right)\vspace{2mm} \\
\displaystyle+(-1)^{i}\left(\frac{x_{1}-x_{2}}{(u_{1}-u_{2})^{2}}\sigma_{j}+(t-\tau(\zeta))\sum_{k=1}^{N}(x_{1}-x_{2})^{-1}\left(\sum_{\ell=1}^{2}\widetilde{\delta}_{i, \ell}x_{\ell}-x_{k}\right)\frac{\partial\sigma_{j}}{\partial v_{k}}\right)
\end{array}
\end{equation*}
and finally the map $H(\cdot, t):(\mathcal{U}_{t}^{+})^{\circ}\rightarrow\mathbb{R}^{2\times 2}$ is given by
\begin{equation*}
H_{i,j}(\zeta, t):=(-1)^{i}\sum_{k=1}^{N}(x_{1}-x_{2})^{-1}\left(\sum_{\ell=1}^{2}\widetilde{\delta}_{i, \ell}x_{\ell}-x_{k}\right)\frac{\partial\sigma_{j}}{\partial v_{k}}.
\end{equation*}
By applying elementary row and column operations on the matrix $D\widetilde{T}^{t}_{\sigma}(\zeta)^{T}$, it holds that
\begin{equation}\label{transposematrix}
\begin{array}{c}
\displaystyle \prod_{i=1}^{2}\mathsf{E}\left(i, N+i, \frac{u_{1}-u_{2}}{x_{1}-x_{2}}\right)D\widetilde{T}^{t}_{\sigma}(\zeta)^{T}\prod_{i=1}^{2}\mathsf{E}\left(i, N+i, -\frac{u_{1}-u_{2}}{x_{1}+tu_{1}-x_{2}-tu_{2}}\right)\times\vspace{2mm}\\
\displaystyle\mathsf{E}\left(N+1, N+2, \frac{u_{2}-\sigma_{2}}{u_{2}-\sigma_{1}}\right)
\end{array}
\end{equation}
equals the block matrix
\begin{equation*}
\left(
\begin{array}{cc}
\displaystyle \frac{x_{1}+tu_{1}-x_{2}-tu_{2}}{x_{1}-x_{2}}D\sigma(\psi(\zeta)) & 0_{2\times N} \vspace{2mm}\\
K(\zeta, t) & L(\zeta, t)
\end{array}
\right),
\end{equation*}
where $K(\cdot, t):\mathcal{U}_{t}^{+}\rightarrow\mathbb{R}^{2\times N}$ is a map whose values will be immaterial to the value of the determinant, and $L(\cdot, t):\mathcal{U}_{t}^{+}\rightarrow\mathbb{R}^{2\times 2}$ is given by
\begin{equation*}
L(\zeta, t):=
\left(
\begin{array}{cc}
\displaystyle -\frac{x_{1}-x_{2}}{x_{1}+tu_{1}-x_{2}-tu_{2}}\frac{u_{2}-\sigma_{1}}{u_{1}-u_{2}} & 0 \vspace{2mm}\\
\star & \displaystyle -\frac{x_{1}-x_{2}}{x_{1}+tu_{1}-x_{2}-tu_{2}}\frac{\sigma_{1}-\sigma_{2}}{u_{2}-\sigma_{1}}
\end{array}
\right).
\end{equation*}
By expanding the determinant of the matrix \eqref{transposematrix} down the final column, which is made up entirely of zeros aside from the bottom right-most entry, and performing the very same action yet once more on the resultant $(N+1)\times (N+1)$ minor, we find that 
\begin{equation*}
\mathrm{det}(D\widetilde{T}^{\sigma}(\zeta)^{T})=\frac{\sigma_{1}-\sigma_{2}}{u_{1}-u_{2}}\left(\frac{x_{1}+tu_{1}-x_{2}-tu_{2}}{x_{1}-x_{2}}\right)^{N-2}\mathrm{det}(D\sigma(\psi(\zeta))),        
\end{equation*}
whence the claimed formula in the statement of the Proposition holds true.
\end{proof}
Analogously to lemma \ref{densitycomp1}, we also require the following structural result which we state without proof.
\begin{lem}\label{anotheromegaid}
Suppose $N\geq 3$. It holds that
\begin{equation*}
\begin{array}{c}
\omega(\zeta)= \vspace{2mm} \\
\displaystyle \left(\frac{(u_{1}-u_{2})^{2}+(x_{1}-x_{2})^{2}}{(u_{1}-u_{2})^{2}+(x_{1}+tu_{1}-x_{2}-tu_{2})^{2}}\right)^{\frac{N-2}{2}}\left|\frac{x_{1}+tu_{1}-x_{2}-tu_{2}}{x_{1}-x_{2}}\right|^{N-2}\times \vspace{2mm} \\
\displaystyle \left|\frac{\sigma_{1}-\sigma_{2}}{x_{1}-x_{2}}\right|\left(\sum_{i>j}(\sigma_{i}-\sigma_{j})^{2}\right)^{-\frac{1}{2}}\left(\sum_{i>j}(x_{i}-x_{j})^{2}\right)^{-\frac{1}{2}}\omega(\widetilde{T}^{t}_{\sigma}(\zeta))
\end{array}
\end{equation*}
for $\zeta\in(\mathcal{U}_{t}^{+})^{\circ}$ and $t>0$.
\end{lem}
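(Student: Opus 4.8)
The plan is to evaluate the surface density $\omega$ at the image point $\widetilde{T}^{t}_{\sigma}(\zeta)$ directly from the closed formula \eqref{surfacedensity}, and then to divide by $\omega(\zeta)$ and reorganise the resulting prefactor. First I would record the coordinates of the image. Writing $\zeta=(X,U)$ and abbreviating $\psi:=\psi(\zeta)$, $\sigma:=\sigma(\psi(\zeta))$ and $\tau:=\tau(\zeta)$, the definition of $\widetilde{T}^{t}_{\sigma}$ gives $\widetilde{T}^{t}_{\sigma}(\zeta)=(\tilde X,\tilde U)$ with $\tilde x_{i}=x_{i}+\tau\psi_{i}+(t-\tau)\sigma_{i}$ for $i\in\{1,\dots,N\}$ and $\tilde U=(\sigma_{1},\sigma_{2})$. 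Thus the only ingredients needed to invoke \eqref{surfacedensity} at the image are $\tilde u_{1}-\tilde u_{2}$, $\tilde x_{1}-\tilde x_{2}$, and $\sum_{i>j}(\tilde x_{i}-\tilde x_{j})^{2}$.

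The key simplification comes from the defining property of $\mathcal{M}$. Since $\Psi(\zeta)\in\mathcal{M}^{\circ}$, the quantity $x_{i}+\tau\psi_{i}$ is independent of $i$ (all $N$ centres coincide at the collision time $\tau$), so every position difference at the image collapses to $\tilde x_{i}-\tilde x_{j}=(t-\tau)(\sigma_{i}-\sigma_{j})$. In particular the slaved indices $i\in\{3,\dots,N\}$ need no separate treatment, and one checks \emph{en passant} that $\psi_{k}(\widetilde{T}^{t}_{\sigma}(\zeta))=\sigma_{k}$, confirming that $\widetilde{T}^{t}_{\sigma}$ is the chart representative of the full flow map $T^{t}_{\sigma}$ on $\mathcal{M}_{t}^{+}$. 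This is also the structural reason the scattered-velocity sum $\sum_{i>j}(\sigma_{i}-\sigma_{j})^{2}$ appears in the statement as a genuinely new quantity: unlike the pure-streaming case of Lemma \ref{densitycomp1}, where $\tilde x_{i}-\tilde x_{j}$ stays proportional to $x_{i}-x_{j}$ through the relation $\psi_{i}-\psi_{j}=-(x_{i}-x_{j})/\tau$, scattering breaks that proportionality, so $\sum_{i>j}(\sigma_{i}-\sigma_{j})^{2}$ is not a multiple of $\sum_{i>j}(x_{i}-x_{j})^{2}$.

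Next I would convert the streaming time into the variables appearing in the statement. From $\tau=-(x_{1}-x_{2})/(u_{1}-u_{2})$ one gets $t-\tau=(x_{1}+tu_{1}-x_{2}-tu_{2})/(u_{1}-u_{2})$, hence $1+(t-\tau)^{2}=\big((u_{1}-u_{2})^{2}+(x_{1}+tu_{1}-x_{2}-tu_{2})^{2}\big)/(u_{1}-u_{2})^{2}$. Substituting $\tilde u_{1}-\tilde u_{2}=\sigma_{1}-\sigma_{2}$, $\tilde x_{1}-\tilde x_{2}=(t-\tau)(\sigma_{1}-\sigma_{2})$ and $\sum_{i>j}(\tilde x_{i}-\tilde x_{j})^{2}=(t-\tau)^{2}\sum_{i>j}(\sigma_{i}-\sigma_{j})^{2}$ into \eqref{surfacedensity} collapses the image density to
\[
\omega(\widetilde{T}^{t}_{\sigma}(\zeta))=\frac{(1+(t-\tau)^{2})^{(N-2)/2}}{|t-\tau|^{N-2}\,|\sigma_{1}-\sigma_{2}|}\Big(\sum_{i>j}(\sigma_{i}-\sigma_{j})^{2}\Big)^{1/2}.
\]
Forming the quotient $\omega(\zeta)/\omega(\widetilde{T}^{t}_{\sigma}(\zeta))$, the factors $|u_{1}-u_{2}|^{N-2}$ produced by eliminating $t-\tau$ cancel, and the surviving powers of $|x_{1}-x_{2}|$, $|\sigma_{1}-\sigma_{2}|$ and $|x_{1}+tu_{1}-x_{2}-tu_{2}|$ regroup into $\big|(x_{1}+tu_{1}-x_{2}-tu_{2})/(x_{1}-x_{2})\big|^{N-2}$ and $\big|(\sigma_{1}-\sigma_{2})/(x_{1}-x_{2})\big|$, together with the ratio $\big((u_{1}-u_{2})^{2}+(x_{1}-x_{2})^{2}\big)/\big((u_{1}-u_{2})^{2}+(x_{1}+tu_{1}-x_{2}-tu_{2})^{2}\big)$ raised to $(N-2)/2$ and the two sum-factors, yielding the claimed prefactor.

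The main obstacle is entirely one of bookkeeping rather than ideas. One must carry absolute values consistently from the outset — on $\mathcal{Q}^{\circ}$ one has $x_{1}-x_{2}<0$, so the power $(x_{1}-x_{2})^{N-1}$ in \eqref{surfacedensity} should be read as $|x_{1}-x_{2}|^{N-1}$, and the square roots contribute $|t-\tau|$ rather than $t-\tau$ — and then collect the competing powers of $|\sigma_{1}-\sigma_{2}|$, $|t-\tau|$, $|u_{1}-u_{2}|$ and $|x_{1}-x_{2}|$ without error. The single point worth isolating as a preliminary observation is the collision-coincidence identity $x_{i}+\tau\psi_{i}=\mathrm{const}$, which is what lets the whole $(N-2)$-dimensional block of slaved coordinates be handled in one stroke; everything downstream mirrors the algebra used for $\omega\circ\widetilde{T}^{-t}$ in Lemma \ref{densitycomp1}. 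In carrying out the final regrouping it is prudent to verify explicitly the sign of the exponent on $\sum_{i>j}(x_{i}-x_{j})^{2}$, since that factor enters only through the square root in $\omega(\zeta)$ and is the one place a transcription slip is easy to make.
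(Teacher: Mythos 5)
The paper states this lemma without proof, so there is nothing to compare against line by line; your route --- evaluate $\omega$ at the image point directly from the closed formula \eqref{surfacedensity}, using the collision-coincidence identity $x_{i}+\tau\psi_{i}=\mathrm{const}$ to collapse every difference $\tilde x_{i}-\tilde x_{j}$ to $(t-\tau)(\sigma_{i}-\sigma_{j})$, and then convert $t-\tau$ via $t-\tau=(x_{1}+tu_{1}-x_{2}-tu_{2})/(u_{1}-u_{2})$ --- is sound and is clearly the intended argument. Your intermediate formula for $\omega(\widetilde{T}^{t}_{\sigma}(\zeta))$ is correct, as is the observation that $\psi_{k}(\widetilde{T}^{t}_{\sigma}(\zeta))=\sigma_{k}$.

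The one point you flag but do not resolve is in fact a genuine discrepancy, and you should commit to it rather than assert that the regrouping ``yields the claimed prefactor.'' Carrying out the quotient carefully, the factor $\bigl(\sum_{i>j}(x_{i}-x_{j})^{2}\bigr)^{1/2}$ from $\omega(\zeta)$ survives in the numerator and the factor $\bigl(\sum_{i>j}(\sigma_{i}-\sigma_{j})^{2}\bigr)^{1/2}$ from $\omega(\widetilde{T}^{t}_{\sigma}(\zeta))$ moves to the denominator, so one obtains
\begin{equation*}
\frac{\omega(\zeta)}{\omega(\widetilde{T}^{t}_{\sigma}(\zeta))}=\left(\frac{(u_{1}-u_{2})^{2}+(x_{1}-x_{2})^{2}}{(u_{1}-u_{2})^{2}+(x_{1}+tu_{1}-x_{2}-tu_{2})^{2}}\right)^{\frac{N-2}{2}}\left|\frac{x_{1}+tu_{1}-x_{2}-tu_{2}}{x_{1}-x_{2}}\right|^{N-2}\left|\frac{\sigma_{1}-\sigma_{2}}{x_{1}-x_{2}}\right|\left(\frac{\sum_{i>j}(x_{i}-x_{j})^{2}}{\sum_{i>j}(\sigma_{i}-\sigma_{j})^{2}}\right)^{\frac{1}{2}},
\end{equation*}
i.e.\ the exponent on $\sum_{i>j}(x_{i}-x_{j})^{2}$ is $+\tfrac{1}{2}$, not the $-\tfrac{1}{2}$ printed in the statement. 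The printed exponent appears to be a typo: the downstream identity \eqref{nextreason} carries $\bigl(\sum_{i<j}(x_{i}-x_{j})^{2}\bigr)^{+\frac{1}{2}}$, which is exactly what the $+\tfrac{1}{2}$ version produces when combined with \eqref{continuethis} and Lemma \ref{sigmadet}, and it is this positive exponent that is then absorbed via \eqref{ratioidentitysimple} into $\bigl(\sum_{i<j}(v_{i}-v_{j})^{2}\bigr)^{1/2}$ to give the final PDE in Theorem \ref{firstmainresult}. So your method is correct and your warning about that exponent is aimed at precisely the right place; the missing step is to conclude that the careful computation contradicts the statement as printed and to verify, as above, that the corrected sign is the one the rest of the paper actually uses.
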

We also require the following result containing some rational identities the components of the parametrisation map $\psi$ satisfy. Its straightforward algebraic demonstration is once again left to the reader. 
\begin{lem}
Suppose $N\geq 3$. It holds that
\begin{equation}\label{ratioidentitysimple}
\frac{x_{i}-x_{j}}{x_{1}-x_{2}}=\frac{\psi_{i}(X, U)-\psi_{j}(X, U)}{u_{1}-u_{2}}    
\end{equation}
for $\zeta\in (\mathcal{U}_{t}^{+})^{\circ}$ and $i, j\in\{1, ..., N\}$ with $i<j$.
\end{lem}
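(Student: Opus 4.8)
The plan is to expose the affine structure hidden in the velocity map $\psi$ and then read the identity off immediately. On $(\mathcal{U}_{t}^{+})^{\circ}$ we have $X\in\mathcal{Q}^{\circ}$, which forces $x_{1}<x_{2}$, and $U\in\mathbb{R}^{2}\setminus\Delta$, which forces $u_{1}\neq u_{2}$; hence I would first set
\[
\lambda:=\frac{u_{1}-u_{2}}{x_{1}-x_{2}}, \qquad \mu:=\frac{x_{1}u_{2}-x_{2}u_{1}}{x_{1}-x_{2}},
\]
both well-defined. The central claim to establish is that $\psi_{i}(\zeta)=\lambda x_{i}+\mu$ for \emph{every} $i\in\{1,\dots,N\}$, i.e. that $\psi$ is affine in the spatial components with a single slope $\lambda$. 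This is the analytic shadow of the defining property of $\mathcal{M}$: all $N$ spheres meet at a common point at a common time, so the planar data $(x_{i},\psi_{i})$ are collinear, and the slope of that common line is exactly $\lambda$.

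Second, I would verify the claim by direct expansion. For $i\in\{3,\dots,N\}$ the definition of $\psi_{i}$ has numerator $(x_{i}-x_{2})u_{1}-(x_{i}-x_{1})u_{2}=x_{i}(u_{1}-u_{2})+(x_{1}u_{2}-x_{2}u_{1})$, so collecting the coefficient of $x_{i}$ gives $\psi_{i}=\lambda x_{i}+\mu$ at once. For the two anomalous indices one checks that the same affine formula is recovered: $\lambda x_{1}+\mu = u_{1}(x_{1}-x_{2})/(x_{1}-x_{2})=u_{1}=\psi_{1}$ and, symmetrically, $\lambda x_{2}+\mu=u_{2}=\psi_{2}$. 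Thus the case distinction built into the definition of $\psi$ collapses, and the single affine representation holds uniformly in $i$.

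Finally, with the affine form in hand the conclusion is one subtraction: for any $i<j$ I would write $\psi_{i}(\zeta)-\psi_{j}(\zeta)=\lambda(x_{i}-x_{j})$, and since $u_{1}-u_{2}=\lambda(x_{1}-x_{2})$ by the very definition of $\lambda$, dividing the two relations yields
\[
\frac{\psi_{i}(\zeta)-\psi_{j}(\zeta)}{u_{1}-u_{2}}=\frac{x_{i}-x_{j}}{x_{1}-x_{2}},
\]
which is precisely \eqref{ratioidentitysimple}. I do not expect any substantive obstacle here; the only points requiring care are the nonvanishing of the two denominators, which is exactly what working on the interior $(\mathcal{U}_{t}^{+})^{\circ}$ guarantees, and the mild bookkeeping of folding the indices $1$ and $2$ into the affine formula rather than treating them as separate cases.
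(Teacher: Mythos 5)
Your proof is correct. The paper itself omits the argument (``its straightforward algebraic demonstration is once again left to the reader''), so there is no official proof to compare against; your observation that $\psi_{i}(\zeta)=\lambda x_{i}+\mu$ with $\lambda=(u_{1}-u_{2})/(x_{1}-x_{2})$ and $\mu=(x_{1}u_{2}-x_{2}u_{1})/(x_{1}-x_{2})$ holds uniformly in $i$ (the cases $i\in\{1,2\}$ collapsing into the same affine formula) is exactly the kind of direct computation intended, and the division by $\lambda$ is legitimate since $u_{1}\neq u_{2}$ on $\mathbb{R}^{2}\setminus\Delta$ and $x_{1}\neq x_{2}$ on $\mathcal{Q}^{\circ}$.
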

\subsection{Derivation of the Jacobian PDE}
Making use of the identities of Section \ref{furtheraux} above, we now derive PDE for the associated second boundary-value problem which any regular scattering map $\sigma$ satisfies. For any $\Phi\in C^{0}_{c}(\mathbb{R}^{2N}, \mathbb{R})$ with the property that $\mathscr{H}(\mathrm{supp}(\Phi)\cap\mathcal{M}_{t}^{+})>0$, we find that 
\begin{equation}\label{deductions}
\begin{array}{cl}
& \langle T^{t}_{\sigma}\#\mu, \Phi\rangle \vspace{2mm} \\
= & \displaystyle\int_{\widetilde{T}^{-t}_{\sigma}(\mathcal{U}_{t}^{+})}\Phi(T^{t}_{\sigma}(\Psi(\zeta)))M(\Psi(\zeta))\omega(\zeta)\,d\mathscr{L}(\zeta) \vspace{2mm} \\
= & \displaystyle\int_{\widetilde{T}^{-t}_{\sigma}(\mathcal{U}_{t}^{+})}\Phi(\widetilde{T}^{t}_{\sigma}\times\overline{T}_{\sigma}(\zeta))M(\Psi(\zeta))\omega(\zeta)\,d\mathscr{L}(\zeta) \vspace{2mm}\\   
 = & \displaystyle \int_{\mathcal{U}_{t}^{+}}\Phi(\widetilde{\Psi}(\zeta), \overline{T}_{\sigma}(\widetilde{T}^{-t}_{\sigma}(\zeta)))M(\widetilde{T}^{-t}_{\sigma}(\zeta), \widetilde{\psi}(\widetilde{T}^{-t}_{\sigma}(\zeta)))\omega(\widetilde{T}^{-t}_{\sigma}(\zeta))\left|\mathrm{det}(D\widetilde{T}^{t}_{\sigma}(\widetilde{T}^{-t}_{\sigma}(\zeta)))^{-1}\right|\,d\mathscr{L}(\zeta)\vspace{2mm}\\
 = & \displaystyle \int_{\mathcal{U}_{t}^{+}}\Phi(\Psi(\zeta))M(\widetilde{T}^{-t}_{\sigma}(\zeta), \widetilde{\psi}(\widetilde{T}^{-t}_{\sigma}(\zeta)))\omega(\widetilde{T}^{-t}_{\sigma}(\zeta))\left|\mathrm{det}(D\widetilde{T}^{t}_{\sigma}(\widetilde{T}^{-t}_{\sigma}(\zeta)))^{-1}\right|\,d\mathscr{L}(\zeta).
\end{array}
\end{equation}
Thus, we find from \eqref{deductions} that
\begin{equation*}
\langle \widetilde{T}_{\sigma}^{t}\#\mu, \Phi\rangle=\langle\mu, \Phi\rangle    
\end{equation*}
for all $\Phi\in C^{0}_{c}(\mathbb{R}^{N}, \mathbb{R})$ satisfying the condition $\mathscr{H}(\mathrm{supp}(\Phi)\cap\mathcal{M}_{t}^{+})>0$ if 
\begin{equation*}
\pm M(\Psi(\zeta))\omega(\zeta)\mathrm{det}(D\widetilde{T}_{\sigma}^{t}(\zeta))^{-1}=M(\Psi(\widetilde{T}_{\sigma}^{t}(\zeta)))\omega(\widetilde{T}_{\sigma}^{t}(\zeta))
\end{equation*}
for all $\zeta\in\mathcal{U}_{t}^{+}$, whence by Lemma \ref{sigmadet} if the $N$-body scattering map $\sigma$ satisfies
\begin{equation}\label{continuethis}
\begin{array}{c}
\displaystyle \pm\frac{\sigma_{1}(\psi(\zeta))-\sigma_{2}(\psi(\zeta))}{u_{1}-u_{2}}\left(\frac{x_{1}+tu_{1}-x_{2}-tu_{2}}{x_{1}-x_{2}}\right)^{N-2}\mathrm{det}(D\sigma(\psi(\zeta))) \vspace{2mm} \\
=\displaystyle \frac{M(\Psi(\zeta))\omega(\zeta)}{M(T^{t}_{\sigma}(\Psi(\zeta)))\omega(\widetilde{T}^{t}_{\sigma}(\zeta))}.
\end{array}
\end{equation}
At this point, we derive sufficient conditions on the density $M$ so that $\mu=M\lambda$ is an invariant measure of the billiard flow $\{T^{t}_{\sigma}\}_{t\in\mathbb{R}}$.
\subsection{Further Symmetry of the Density $M$}\label{furthersym}
It has already been demonstrated in Section \ref{emmteeminus} above that if the density $M$ of the measure $\mu=M\mathscr{H}$ is of the shape
\begin{equation*}
M(Z)=\left(\prod_{i<j}(v_{i}-v_{j})^{2}+(x_{i}-x_{j})^{2}\right)^{-\frac{N-2}{N(N-1)}}m_{0}(X\cdot (\mathsf{E}(V)\mathbf{1}-\mathsf{P}(V))V, V)        
\end{equation*}
for some measurable function $m_{0}$, then for each fixed $t\in\mathbb{R}$ it holds that $\langle T^{t}_{\sigma}\#\mu, \Phi\rangle=\langle \mu, \Phi\rangle$ for all $\Phi\in C^{0}_{c}(\mathbb{R}^{2N}, \mathbb{R})$ with the necessary support property. It will be helpful to `desymmetrise' the density $L$ of the Liouville measure $\lambda$ on $\mathcal{M}$ for our purposes in what follows. In particular, we shall use the fact (as found above) that
\begin{equation*}
\left(\frac{(u_{1}-u_{2})^{2}+(x_{1}+tu_{1}-x_{1}-tu_{2})^{2}}{(u_{1}-u_{2})^{2}+(x_{1}-x_{2})^{2}}\right)^{\frac{N-2}{2}}M(X+t\psi(\zeta), \psi(\zeta))=M(X, \psi(\zeta))     
\end{equation*}
for $Z\in\mathcal{M}^{\circ}$. As a consequence, it follows that the ratio of densities appearing on the right-hand side of identity \eqref{continuethis} is given by
\begin{equation}\label{ratioidentity}
\begin{array}{cl}
& \displaystyle\frac{M(Z)}{M(T^{t}_{\sigma}(Z))} \vspace{2mm} \\
= & \displaystyle \left((u_{1}-u_{2})^{2}+(x_{2}-x_{2})^{2}\right)^{-\frac{N-2}{2}}\times \vspace{2mm} \\
& \displaystyle\left((\sigma_{1}-\sigma_{2})^{2}+(x_{1}+\tau v_{1}+(t-\tau)\sigma_{1}-x_{2}-\tau v_{2}-(t-\tau)\sigma_{2})^{2}\right)^{\frac{N-2}{2}}\times \vspace{2mm}\\
& \displaystyle\frac{m_{0}(X\cdot(\mathsf{E}(V)\mathbf{1}-\mathsf{P}(V)V, V))}{m_{0}((X+\tau V+(t-\tau)\sigma)\cdot(\mathsf{E}(\sigma)\mathbf{1}-\mathsf{P}(\sigma)\sigma), \sigma)}
\end{array}
\end{equation}
for all $Z\in\mathcal{M}^{\circ}$. We note that since
\begin{equation*}
\sigma\cdot(\mathsf{E}(\sigma)\mathbf{1}-\mathsf{P}(\sigma)\sigma)=0,
\end{equation*}
it follows that
\begin{equation*}
\frac{m_{0}(X\cdot(\mathsf{E}(V)\mathbf{1}-\mathsf{P}(V)V, V))}{m_{0}((X+\tau V+(t-\tau)\sigma)\cdot(\mathsf{E}(\sigma)\mathbf{1}-\mathsf{P}(\sigma)\sigma), \sigma)}=\frac{m(X, V)}{m(X+\tau V, \sigma)}.
\end{equation*}
Thus, in order that the PDE be independent of the density $M$, we stipulate additionally that $m$ admit the symmetry
\begin{equation}
m(X, \sigma(V))=m(X, V)    
\end{equation}
for all $Z=(X, V)\in \mathcal{M}^{\circ}$. Thus, it follows that
\begin{equation*}
\frac{m(X, V)}{m(X+\tau V, \sigma)}=\frac{m(X, V)}{m(X+\tau V, V)}=1.
\end{equation*}
Finally, as it holds that
\begin{equation*}
\begin{array}{c}
\displaystyle\left(\frac{(\sigma_{1}-\sigma_{2})^{2}+(x_{1}+\tau v_{1}+(t-\tau)\sigma_{1}-x_{2}-\tau v_{2}-(t-\tau)\sigma_{2})^{2}}{(u_{1}-u_{2})^{2}+(x_{1}-x_{2})^{2}}\right)^{\frac{N-2}{2}} \vspace{2mm} \\
\displaystyle = \left(\frac{(u_{1}-u_{2})^{2}+(x_{1}+tu_{1}-x_{2}-tu_{2})^{2}}{(u_{1}-u_{2})^{2}+(x_{1}-x_{2})^{2}}\right)^{\frac{N-2}{2}}\left|\frac{\sigma_{1}-\sigma_{2}}{u_{1}-u_{2}}\right|^{N-2},
\end{array}
\end{equation*}
we deduce from \eqref{ratioidentity} that $\sigma$ satisfies
\begin{equation*}
\begin{array}{c}
\displaystyle \pm\frac{\sigma_{1}(\psi(\zeta))-\sigma_{2}(\psi(\zeta))}{u_{1}-u_{2}}\left|\frac{x_{1}+tu_{1}-x_{2}-tu_{2}}{x_{1}-x_{2}}\right|^{N-2}\mathrm{det}(D\sigma(\psi(\zeta))) \vspace{2mm} \\
=\displaystyle \frac{M(\Psi(\zeta))\omega(\zeta)}{M(T^{t}_{\sigma}(\Psi(\zeta)))\omega(\widetilde{T}^{t}_{\sigma}(\zeta))}
\end{array}
\end{equation*}
if and only if
\begin{equation}\label{nextreason}
\begin{array}{c}
\displaystyle\mathrm{det}(D\sigma(\psi(\zeta)))=\pm\left|\frac{\sigma_{1}-\sigma_{2}}{u_{1}-u_{2}}\right|^{N-2}\left|\frac{u_{1}-u_{2}}{x_{1}-x_{2}}\right|\left(\sum_{i<j}(\sigma_{i}-\sigma_{j})^{2}\right)^{-\frac{1}{2}}\left(\sum_{i<j}(x_{i}-x_{j})^{2}\right)^{\frac{1}{2}}.
\end{array}
\end{equation}
We note that
\begin{equation*}
\begin{array}{cl}
& \displaystyle \left|\frac{v_{1}-v_{2}}{x_{1}-x_{2}}\right|\left(\sum_{i<j}(x_{i}-x_{j})^{2}\right)^{\frac{1}{2}} \vspace{2mm} \\
= & \displaystyle \left((v_{1}-v_{2})^{2}\sum_{i<j}\left(\frac{x_{i}-x_{j}}{x_{1}-x_{2}}\right)^{2}\right)^{\frac{1}{2}}\vspace{2mm} \\
\overset{\eqref{ratioidentitysimple}}{=} & \displaystyle \left((v_{1}-v_{2})^{2}\sum_{i<j}\left(\frac{v_{i}-v_{j}}{v_{1}-v_{2}}\right)^{2}\right)^{\frac{1}{2}}\vspace{2mm} \\
= & \displaystyle \left(\sum_{i<j}(v_{i}-v_{j})^{2}\right)^{\frac{1}{2}}
\end{array}
\end{equation*}
for all $Z=(X, V)\in\mathcal{M}^{\circ}$, whence from \eqref{nextreason} above we infer that $\sigma$ satisfies the PDE
\begin{equation*}
\mathrm{det}(D\sigma(V))=\pm\left|\frac{\sigma_{1}(V)-\sigma_{2}(V)}{v_{1}-v_{2}}\right|^{N-2}\left(\sum_{i<j}(\sigma_{i}(V)-\sigma_{j}(V))^{2}\right)^{-\frac{1}{2}}\left(\sum_{i<j}(v_{i}-v_{j})^{2}\right)^{\frac{1}{2}}    
\end{equation*}
for all $V\in\mathcal{V}^{\circ}$. Now, just as we observed in Section \ref{emmteeminus} above, there is once again a spurious bias on quantities indexed by 1 and 2. This is again a consequence of our choice of chart map $\Psi$. Indeed, by instead slaving the velocity variables $v_{1}, ..., v_{k-1}, v_{k+1}, ..., v_{\ell-1}, v_{\ell+1}, ..., v_{N}$ to $v_{k}$ and $v_{\ell}$ and redefining the chart map $\Psi$ appropriately (note that $k=1$ and $\ell=2$ in the above), in turn repeating the above arguments it can be shown that $\sigma$ also satisfies one of the equations
\begin{equation*}
\mathrm{det}(D\sigma(V))=\pm\left|\frac{\sigma_{k}(V)-\sigma_{\ell}(V)}{v_{k}-v_{\ell}}\right|^{N-2}\left(\sum_{i<j}(\sigma_{i}(V)-\sigma_{j}(V))^{2}\right)^{-\frac{1}{2}}\left(\sum_{i<j}(v_{i}-v_{j})^{2}\right)^{\frac{1}{2}}    
\end{equation*}
for any $k, \ell\in\{1, ..., N\}$ with $k<\ell$, whence by symmetrisation we conclude that if the scattering map $\sigma$ is a classical solution of one of the Jacobian PDE
\begin{equation*}
\begin{array}{c}
\displaystyle \mathrm{det}(D\sigma(V))= \vspace{2mm} \\
\displaystyle\pm\left(\prod_{i<j}\left(\frac{\sigma_{i}(V)-\sigma_{j}(V)}{v_{i}-v_{j}}\right)^{2}\right)^{\frac{N-2}{N(N-1)}}\left(\sum_{i<j}(\sigma_{i}(V)-\sigma_{j}(V))^{2}\right)^{-\frac{1}{2}}\left(\sum_{i<j}(v_{i}-v_{j})^{2}\right)^{\frac{1}{2}}    
\end{array}
\end{equation*}
for $V\in\mathcal{V}^{\circ}$, then it generates an invariant measure. 
\subsection{Proofs of Theorems \ref{firstmainresult} and \ref{densitythm}}
Let us now summarise the observations of Sections \ref{emmteeminus} and \eqref{emmplus} in the form of the proof of our main Theorem \ref{firstmainresult}, which we state one again for the convenience of the reader.
\begin{thm}
Let $N\geq 3$. Suppose $\sigma\in C^{0}(\mathcal{V}^{-}, \mathcal{V}^{+})\cap C^{1}((\mathcal{V}^{-})^{\circ}, (\mathcal{V}^{+})^{\circ})$ is a given $N$-body scattering map. It holds that $\sigma$ generates a billiard flow $\{T^{t}_{\sigma}\}_{t\in\mathbb{R}}$ on $\mathcal{M}$ with the property 
\begin{equation*}
T^{t}_{\sigma}\#\lambda=\lambda    
\end{equation*}
for all $t\in\mathbb{R}$ if and only if $\sigma$ is a classical solution of one the second boundary-value problems:
\begin{equation*}
\left\{
\begin{array}{l}
\displaystyle \mathrm{det}(D\sigma(V))=\pm\frac{F(V)}{F(\sigma(V))} \quad \text{for}\hspace{2mm}V\in(\mathcal{V}^{-})^{\circ}, \vspace{2mm}\\
\displaystyle \sigma(\mathcal{V}^{-})=\mathcal{V}^{+},
\end{array}
\right.
\end{equation*}
where $F$ is defined in \eqref{RHS} above.
\end{thm}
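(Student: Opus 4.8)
The plan is to assemble the regional analyses of Sections \ref{emmteeminus} and \ref{emmplus} into a single biconditional, specialised to the density $M=L$. First I would reduce the invariance $T^t_\sigma\#\lambda=\lambda$ to a pair of independent regional conditions. By the approximation lemma (via Urysohn), this invariance is equivalent to $\langle T^t_\sigma\#\lambda,\Phi\rangle=\langle\lambda,\Phi\rangle$ for every $\Phi\in C^0_c(\mathbb{R}^{2N},\mathbb{R})$. Since $\Psi(\mathcal{U}_t^-)=\mathcal{M}_t^-$ and $\Psi(\mathcal{U}_t^+)=\mathcal{M}_t^+$ partition $\mathcal{M}^\circ$ up to the boundary $\partial\mathcal{M}$, which has dimension strictly below $N+2$ and hence satisfies $\mathscr{H}(\partial\mathcal{M})=0$, the pairing splits as the sum of an integral over $\mathcal{M}_t^-$ and one over $\mathcal{M}_t^+$. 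Testing against functions $\Phi$ whose support meets only one region then shows that full invariance holds if and only if invariance holds separately over $\mathcal{M}_t^-$ and over $\mathcal{M}_t^+$.

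Second, I would dispatch the region $\mathcal{M}_t^-$. Under the reweighting $m(X,V)=\left(\prod_{i<j}((v_i-v_j)^2+(x_i-x_j)^2)\right)^{\frac{N-2}{N(N-1)}}M(X,V)$, the Liouville choice $M=L$ gives $m\equiv 1$, which trivially solves the functional equation \eqref{m0funky}; the scattering symmetry on $m$ demanded in Section \ref{furthersym} is likewise automatic, as $m$ is constant. Because the flow acts on $\mathcal{M}_t^-$ as a $\sigma$-independent shear, the analysis of Section \ref{emmteeminus} then yields invariance of $\lambda$ over $\mathcal{M}_t^-$ for \emph{every} $\sigma$, so this region imposes no constraint.

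Third, I would feed $M=L$ (equivalently $m\equiv 1$) into the analysis over $\mathcal{M}_t^+$. With $m$ constant, identity \eqref{continuethis} collapses — via Lemmas \ref{sigmadet} and \ref{anotheromegaid} and the rational identity \eqref{ratioidentitysimple}, exactly as in Section \ref{furthersym} — to the chart-biased equation \eqref{nextreason}, and, after symmetrising over the index pair by taking the geometric mean of the $\frac{N(N-1)}{2}$ chart-specific equations obtained from slaving all velocities to $v_k,v_\ell$, to the fully symmetric Jacobian PDE
\begin{equation*}
\det(D\sigma(V))=\pm\left(\prod_{i<j}\left(\frac{\sigma_i(V)-\sigma_j(V)}{v_i-v_j}\right)^2\right)^{\frac{N-2}{N(N-1)}}\left(\sum_{i<j}(\sigma_i(V)-\sigma_j(V))^2\right)^{-\frac12}\left(\sum_{i<j}(v_i-v_j)^2\right)^{\frac12}
\end{equation*}
on $(\mathcal{V}^-)^\circ$. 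The remaining step is purely algebraic: with $H$ as in \eqref{RHS}, one verifies directly that the right-hand side equals $\pm H(V)/H(\sigma(V))$, so the two sign choices recover exactly the second boundary-value problems \eqref{2ndbvpplus} and \eqref{2ndbvpminus}, the boundary condition $\sigma(\mathcal{V}^-)=\mathcal{V}^+$ being built into the definition of an $N$-body scattering map as a bijection $\mathcal{V}^-\to\mathcal{V}^+$.

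Finally I would close the biconditional. For sufficiency, a solution of either problem makes the $\mathcal{M}_t^+$ integrands agree by construction while the $\mathcal{M}_t^-$ contribution agrees automatically, so summing yields $\langle T^t_\sigma\#\lambda,\Phi\rangle=\langle\lambda,\Phi\rangle$ for all $\Phi$ and hence $T^t_\sigma\#\lambda=\lambda$. For necessity, invariance forces the $\mathcal{M}_t^+$ identity for all test functions; since $\Phi\circ\Psi$ ranges over all of $C^0_c(\Omega,\mathbb{R})$, the integrand difference vanishes $\mathscr{L}$-almost everywhere, and continuity of $\sigma$ on the connected open polytope $(\mathcal{V}^-)^\circ$ promotes this to the pointwise PDE. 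I expect the main subtlety to lie in the sign bookkeeping of this last step: the change of variables delivers $|\det(D\widetilde{T}^t_\sigma)|$, so one must argue that $\det(D\sigma)$ has constant sign on the connected domain $(\mathcal{V}^-)^\circ$ — ensuring that exactly one of \eqref{2ndbvpplus}, \eqref{2ndbvpminus} holds globally rather than a patchwork of the two — and check that the geometric-mean symmetrisation over the pairs $(k,\ell)$ is consistent with that single choice.
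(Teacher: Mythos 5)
Your proposal is correct and follows essentially the same route as the paper's proof: the same decomposition of the pairing into integrals over $\mathcal{M}_{t}^{-}$ and $\mathcal{M}_{t}^{+}$ via the chart $\Psi$, the same use of Lemmas \ref{densitycomp1}, \ref{sigmadet} and \ref{anotheromegaid} to show the $\mathcal{M}_{t}^{-}$ region imposes no constraint while the $\mathcal{M}_{t}^{+}$ region reduces to the symmetrised Jacobian PDE. Your extra attention to the necessity direction and the sign bookkeeping fills in details the paper explicitly leaves to the reader.
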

\begin{proof}
We consider the proof of sufficiency to begin. Let $E\subset\mathcal{M}$ be a bounded measurable set, and let $\{\Phi_{j}\}_{j=1}^{\infty}\subset C^{0}_{c}(\mathbb{R}^{2N}, \mathbb{R})$ denote any sequence with the property that
\begin{equation*}
\lambda(E)=\lim_{j\rightarrow\infty}\int_{\mathcal{M}}\Phi_{j}\,d\lambda.    
\end{equation*}
Let $t\in\mathbb{R}$ be given and fixed, and let $\sigma$ be an $N$-body scattering map of class $C^{0}(\mathcal{V}^{-}, \mathcal{V}^{+})\cap C^{1}((\mathcal{V}^{-})^{\circ}, (\mathcal{V}^{+})^{\circ})$. We have that
\begin{equation*}
\begin{array}{lcl}
(T^{t}_{\sigma}\#\lambda)(E) & = & \displaystyle \lim_{j\rightarrow\infty}\displaystyle \langle T^{t}_{\sigma}\#\lambda, \Phi_{j}\rangle \vspace{2mm} \\
& = & \displaystyle \lim_{j\rightarrow\infty}\int_{\mathcal{M}^{\circ}}\Phi_{j}(T^{t}_{\sigma}(Z))\,d\lambda(Z) \vspace{2mm}\\
& = & \displaystyle \lim_{j\rightarrow\infty}\underbrace{\int_{T^{-t}_{\sigma}(\mathcal{M}_{t}^{-})}\Phi_{j}(T^{t}_{\sigma}(Z))\,d\lambda(Z)}_{I_{t}(\Phi_{j}):=}+\lim_{j\rightarrow\infty}\underbrace{\int_{T^{-t}_{\sigma}(\mathcal{M}_{t}^{+})}\Phi(T^{t}_{\sigma}(Z))\,d\lambda(Z)}_{J_{t}(\Phi_{j}):=}
\end{array}
\end{equation*}
by Assumption \ref{Ass}. Let us focus on the first of these integrals. Now,
\begin{equation*}
\begin{array}{lcl}
I_{t}(\Phi_{j}) & = & \displaystyle\int_{T_{\sigma}^{-t}(\mathcal{M}_{t}^{-})}\Phi_{j}(T^{t}_{\sigma}(Z))\,d\lambda(Z) \vspace{2mm}\\
& = & \displaystyle \int_{\widetilde{T}^{-t}(\mathcal{U}_{t}^{-})}\Phi_{j}(\widetilde{T}^{t}(\zeta), \overline{\psi}(\zeta))L(\zeta, \overline{\psi}(\zeta))\omega(\zeta)d\mathscr{L}(\zeta)\vspace{2mm}\\
& = & \displaystyle \int_{\mathcal{U}_{t}^{-}}\Phi_{j}(\Psi(\zeta))L(\widetilde{T}^{-t}(\zeta), \overline{\psi}(\zeta))\omega(\widetilde{T}^{-t}(\zeta))\left|\frac{x_{1}-tu_{1}-x_{2}+tu_{2}}{x_{1}-x_{2}}\right|^{N-2}\,d\mathscr{L}(\zeta),
\end{array}
\end{equation*}
and so by Lemma \ref{densitycomp1} it holds that
\begin{equation*}
\begin{array}{lcl}
I_{t}(\Phi_{j}) & = & \displaystyle \int_{\mathcal{U}_{t}^{-}}\Phi_{j}(\Psi(\zeta))L(\Psi(\zeta))\omega(\zeta)\,d\mathscr{L}(\zeta)\vspace{2mm}\\
& = & \displaystyle \int_{\Psi(\mathcal{U}_{t}^{-})}\Phi_{j}(Z)L(Z)\,d\mathscr{H}(Z)\vspace{2mm} \\
& = & \displaystyle \int_{\mathcal{M}_{t}^{-}}\Phi_{j}\,d\lambda.
\end{array}
\end{equation*}
Let us now consider the second integral $J_{t}(\Phi_{j})$. Indeed, by way of similar calculations, it holds that
\begin{equation*}
J_{t}(\Phi_{j}) = \int_{\mathcal{U}_{t}^{+}}\Phi_{j}(\Psi(\zeta))L(\Psi(\widetilde{T}^{-t}_{\sigma}(\zeta)))\omega(\widetilde{T}^{-t}_{\sigma}(\zeta))\left|\mathrm{det}(D\widetilde{T}^{t}_{\sigma}(\widetilde{T}^{-t}_{\sigma}(\zeta)))\right|^{-1}d\mathscr{L}(\zeta). 
\end{equation*}
By Lemmas \ref{sigmadet} and \ref{anotheromegaid}, it holds that 
\begin{equation*}
\mathrm{det}(D\widetilde{T}^{t}_{\sigma}(\widetilde{T}^{-t}_{\sigma}(\zeta)))^{-1}=\frac{L(\Psi(\zeta))\omega(\zeta)}{L(\Psi(\widetilde{T}^{-t}_{\sigma}(\zeta)))\omega(\widetilde{T}^{-t}_{\sigma}(\zeta))} 
\end{equation*}
for all $\zeta\in\mathcal{U}_{t}^{+}$ if and only if
\begin{equation}\label{showimequiv}
\mathrm{det}(D\widetilde{T}^{t}_{\sigma}(\zeta))^{-1}=\frac{L(\Psi(\widetilde{T}^{t}_{\sigma}(\zeta)))\omega(\widetilde{T}^{t}_{\sigma}(\zeta))}{L(\Psi(\zeta))\omega(\zeta)}     
\end{equation}
for all $\zeta\in\widetilde{T}^{-t}_{\sigma}(\mathcal{U}_{t}^{+})$. By the calculations in Section \ref{emmplus} above, identity \eqref{showimequiv} holds true if and only if
\begin{equation*}
\mathrm{det}(D\sigma(\psi(\zeta)))=\pm\left|\frac{\sigma_{1}-\sigma_{2}}{u_{1}-u_{2}}\right|^{N-2}\left|\frac{u_{1}-u_{2}}{x_{1}-x_{2}}\right|\left(\sum_{i<j}(\sigma_{i}-\sigma_{j})^{2}\right)^{-\frac{1}{2}}\left(\sum_{i<j}(x_{i}-x_{j})^{2}\right)^{\frac{1}{2}}.   
\end{equation*}
Thus, by the assumption that $\sigma$ is a classical solution of one of the second boundary-value problems in the statement of the Theorem, it holds that
\begin{equation*}
\begin{array}{lcl}
J_{t}(\Phi_{j}) & = & \displaystyle\int_{\mathcal{U}_{t}^{+}}\Phi_{j}(\Psi(\zeta))L(\Psi(\zeta))\omega(\zeta)d\mathscr{L}(\zeta)\vspace{2mm}\\
& = & \displaystyle \int_{\mathcal{M}_{t}^{+}}\Phi_{j}\,d\lambda.
\end{array}
\end{equation*}
As a result, we conclude that
\begin{equation*}
\begin{array}{lcl}
(T^{t}_{\sigma}\#\lambda)(E) & = & \displaystyle \lim_{j\rightarrow\infty}I_{t}(\Phi_{j})+\lim_{j\rightarrow\infty}J_{t}(\Phi_{j})\vspace{2mm} \\
& = & \displaystyle \lim_{j\rightarrow\infty}\int_{\mathcal{M}_{t}^{-}}\Phi_{j}\,d\lambda+\lim_{j\rightarrow\infty}\int_{\mathcal{M}_{t}^{+}}\Phi_{j}\,d\lambda \vspace{2mm} \\
& = & \displaystyle \lim_{j\rightarrow\infty}\int_{\mathcal{M}^{\circ}}\Phi_{j}\,d\lambda \vspace{2mm}\\
& = & \lambda(E),
\end{array}
\end{equation*}
whence $\lambda$ is an invariant measure of the billiard flow $\{T^{t}_{\sigma}\}_{t\in\mathbb{R}}$. The proof of necessity is straightforward and is left to the reader.
\end{proof}
We may also conclude the proof of Theorem \ref{densitythm} with the work established above.
\begin{thm}
Let $N\geq 3$. Suppose $\sigma\in C^{0}(\mathcal{V}^{-}, \mathcal{V}^{+})\cap C^{1}((\mathcal{V}^{-})^{\circ}, (\mathcal{V}^{+})^{\circ})$ is a momentum- and energy-conserving $N$-body scattering map, and let $\{T^{t}_{\sigma}\}_{t\in\mathbb{R}}$ denote the billiard flow on $\mathcal{M}$ it generates. Suppose, moreover, that $\mu\ll \lambda$, with
\begin{equation*}
\frac{d\mu}{d\lambda}=M    
\end{equation*}
for some measurable function $M:\mathcal{M}\rightarrow\mathbb{R}$. If
$M$ is of the form
\begin{equation*}
M(Z)=m_{0}(X\cdot(\mathsf{E}(V)\mathbf{1}-\mathsf{P}(V)V), V)    
\end{equation*}
for $Z=(X, V)\in\mathcal{M}$ and some measurable $m_{0}:\mathbb{R}\times\mathbb{R}^{N}\rightarrow\mathbb{R}$ admitting the scattering symmetry
\begin{equation*}
m_{0}(y, \sigma(V))=m_{0}(y, V)    
\end{equation*}
for $y\in\mathbb{R}$ and $V\in\mathcal{V}^{-}$, then $T^{t}_{\sigma}\#\mu=\mu$ for all $t\in\mathbb{R}$.   
\end{thm}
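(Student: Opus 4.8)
The plan is to recognise that the two hypotheses placed on $M$ — that it be of the form $M(Z)=m_{0}(X\cdot(\mathsf{E}(V)\mathbf{1}-\mathsf{P}(V)V), V)$ and that $m_{0}$ enjoy the scattering symmetry \eqref{symcon} — are together precisely the statement that $M$ is a \emph{first integral} of the flow, i.e. that $M\circ T^{t}_{\sigma}=M$ on $\mathcal{M}^{\circ}$ for every $t$. Granting this, invariance of $\mu=M\lambda$ would follow at once from invariance of the reference measure $\lambda$: for any admissible $\Phi$ one writes
\begin{equation*}
\langle T^{t}_{\sigma}\#\mu, \Phi\rangle=\int_{\mathcal{M}^{\circ}}(\Phi\circ T^{t}_{\sigma})\,M\,d\lambda=\int_{\mathcal{M}^{\circ}}\big((\Phi M)\circ T^{t}_{\sigma}\big)\,d\lambda=\langle T^{t}_{\sigma}\#\lambda, \Phi M\rangle=\langle\lambda, \Phi M\rangle=\langle\mu, \Phi\rangle,
\end{equation*}
where the second equality uses $M=M\circ T^{t}_{\sigma}$ and the penultimate one uses $T^{t}_{\sigma}\#\lambda=\lambda$, the latter holding by Theorem \ref{firstmainresult} for the scattering maps under consideration. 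Thus the whole proof reduces to verifying the first-integral property, which I would establish by splitting a trajectory into its free-flight portions and its single collision, mirroring the decomposition $\mathcal{M}^{\circ}=\mathcal{M}_{t}^{-}\cup\mathcal{M}_{t}^{+}$ of Sections \ref{emmteeminus} and \ref{emmplus}.

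For the free-flight portions (the region $\mathcal{M}_{t}^{-}$), I would first observe that the scalar $y(Z):=X\cdot(\mathsf{E}(V)\mathbf{1}-\mathsf{P}(V)V)$ is constant under the shear $X\mapsto X+sV$, since $V\cdot(\mathsf{E}(V)\mathbf{1}-\mathsf{P}(V)V)=\mathsf{E}(V)\mathsf{P}(V)-\mathsf{P}(V)\mathsf{E}(V)=0$. Consequently any $M$ depending on $X$ only through $y$ is automatically unchanged along free flight; this is exactly the functional equation \eqref{m0funky} whose solution is recorded in Section \ref{solvefunky}, now read off directly rather than rederived. Hence the assumed $m_{0}$-form delivers invariance over all test functions supported in $\mathcal{M}_{t}^{-}$.

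The substance of the argument lies in the collision crossing (the region $\mathcal{M}_{t}^{+}$), and this is the step I expect to be the main obstacle. Here the velocity jumps from $V$ to $\sigma(V)$ while the configuration passes through the corner, where all $N$ centres coincide; for $Z\in\mathcal{M}$ this means $X+\tau(Z)V=c\mathbf{1}$ for some $c\in\mathbb{R}$. The key computation is to evaluate $M$ immediately after collision: using $\sigma(V)\cdot(\mathsf{E}(\sigma(V))\mathbf{1}-\mathsf{P}(\sigma(V))\sigma(V))=0$ to discard the post-collisional free-flight term, the argument of $m_{0}$ collapses to $c\mathbf{1}\cdot(\mathsf{E}(\sigma(V))\mathbf{1}-\mathsf{P}(\sigma(V))\sigma(V))=c\big(N\mathsf{E}(\sigma(V))-\mathsf{P}(\sigma(V))^{2}\big)$. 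Momentum and energy conservation now replace $\mathsf{E}(\sigma(V))$ and $\mathsf{P}(\sigma(V))$ by $\mathsf{E}(V)$ and $\mathsf{P}(V)$, identifying this value with $y(Z)=c\big(N\mathsf{E}(V)-\mathsf{P}(V)^{2}\big)$, while the scattering symmetry \eqref{symcon} replaces the second argument $\sigma(V)$ by $V$. Together these give $M(T^{t}_{\sigma}(Z))=m_{0}(y(Z),V)=M(Z)$, completing the verification that $M$ is a first integral. The care needed here concerns the bookkeeping of $\tau(Z)$ and the collision geometry, and the matching of this cancellation with the density ratio of \eqref{ratioidentity}: concretely, one checks that the factor $m(Z)/m(T^{t}_{\sigma}(Z))$ there is identically $1$, so that the invariance condition \eqref{continuethis} reduces to the very same Jacobian identity that governs the invariance of $\lambda$ in Section \ref{furthersym}.

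Assembling the two regions through Assumption \ref{Ass} (which guarantees $T^{t}_{\sigma}(\mathcal{M}_{t}^{-})\cup T^{t}_{\sigma}(\mathcal{M}_{t}^{+})=\mathcal{M}^{\circ}$), and passing from continuous test functions to indicator functions of bounded measurable sets via the approximation lemma of Section \ref{emmteeminus}, one concludes $T^{t}_{\sigma}\#\mu=\mu$ for every $t\in\mathbb{R}$. The only ingredient beyond the stated hypotheses is the invariance of $\lambda$, invoked from Theorem \ref{firstmainresult}; the remainder of the argument is the first-integral verification above.
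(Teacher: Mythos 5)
Your proposal is correct in substance but organises the argument differently from the paper. The paper's own proof is a one-line reduction: it reruns the change-of-variables computations of Sections \ref{emmteeminus} and \ref{emmplus} with the integrand carrying the extra factor $M$ alongside $L$, observing that the $m_{0}$-form makes $ML$ satisfy the functional equation over $\mathcal{M}_{t}^{-}$ and that the symmetry \eqref{symcon} makes the ratio $m(X,V)/m(X+\tau V,\sigma(V))$ in \eqref{ratioidentity} equal to $1$ over $\mathcal{M}_{t}^{+}$, so that the invariance condition collapses to the same Jacobian identity governing $\lambda$. You instead factor the problem as: (a) $\lambda$ is invariant, and (b) $M$ is a first integral, i.e.\ $M\circ T^{t}_{\sigma}=M$ on $\mathcal{M}^{\circ}$, whence $T^{t}_{\sigma}\#(M\lambda)=M\lambda$. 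Your verification of (b) is correct and is the cleaner way to see what the two hypotheses on $M$ are doing: along free flight $y(Z)=X\cdot(\mathsf{E}(V)\mathbf{1}-\mathsf{P}(V)V)$ is conserved because $V\cdot(\mathsf{E}(V)\mathbf{1}-\mathsf{P}(V)V)=0$, and across the collision $X+\tau(Z)V=c\mathbf{1}$ gives $y=c\left(N\mathsf{E}(V)-\mathsf{P}(V)^{2}\right)$, which is preserved by momentum and energy conservation, after which \eqref{symcon} handles the jump in the second argument. This is a genuinely more transparent route than redoing the Jacobian bookkeeping, and it makes visible that the density ratio in \eqref{ratioidentity} is identically $1$ rather than merely checking it.

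The one step that does not hold as you have written it is the appeal to Theorem \ref{firstmainresult} for $T^{t}_{\sigma}\#\lambda=\lambda$. That theorem is an \emph{equivalence}: $\lambda$ is invariant if and only if $\sigma$ solves one of the second boundary-value problems \eqref{2ndbvpplus} or \eqref{2ndbvpminus}. Momentum and energy conservation give only $\sum_{i<j}(\sigma_{i}(V)-\sigma_{j}(V))^{2}=\sum_{i<j}(v_{i}-v_{j})^{2}$; they constrain neither $\mathrm{det}(D\sigma)$ nor the product $\prod_{i<j}(\sigma_{i}(V)-\sigma_{j}(V))^{2}$, so a momentum- and energy-conserving $\sigma$ of the stated regularity need not satisfy the Jacobian PDE, and then $\lambda$ is \emph{not} invariant and neither is $\mu$. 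Your factorisation makes this dependency explicit where the paper's terse proof leaves it implicit, but the dependency is genuinely there in both arguments: the computation over $\mathcal{M}_{t}^{+}$ cannot close without the Jacobian identity. To make the proof airtight you must either add the hypothesis that $\sigma$ solves one of \eqref{2ndbvpplus}--\eqref{2ndbvpminus} (as $\sigma^{\star}$ of Theorem \ref{uniquelinear} does), or state the conclusion conditionally on $T^{t}_{\sigma}\#\lambda=\lambda$. With that hypothesis supplied, your argument is complete.
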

\begin{proof}
This follows from arguments largely similar to those in the proof of Theorem \ref{firstmainresult}, with the notable difference that the integrals contain an additional contribution to the integrand, namely the density $M$ (as opposed to simply $L$). We leave the details of the proof to the reader.
\end{proof}
With the above established, it is straightforward to generate a large class of invariant measures of a given billiard flow $\{T^{t}_{\sigma}\}_{t\in\mathbb{R}}$ on $\mathcal{M}$. Indeed, suppose that $M:\mathbb{R}^{2N}\rightarrow\mathbb{R}$ is of the form
\begin{equation*}
M(Z)=f(X\cdot(|V|^{2}\mathbf{1}-\mathsf{P}(V)V))g(V),    
\end{equation*}
for any measurable map $f:\mathbb{R}\rightarrow\mathbb{R}$, where $g:\mathbb{R}^{N}\rightarrow\mathbb{R}$ is any measurable map constant on the orbits of points in $\mathbb{R}^{N}$ under the iterated action of $\sigma$. If follows $\mu=M\lambda$ is an invariant measure of the corresponding flow $\{T^{t}_{\sigma}\}_{t\in\mathbb{R}}$. 
\section{Uniqueness of Momentum- and Energy-conserving $N$-body Linear Scattering}\label{uniquelinny}
In this Section, we provide a proof of Theorem \ref{uniquelinear}, namely that there can only be one linear $N$-body scattering map $\sigma$ which conserves both momentum and energy, and that this scattering map generates a billiard flow on the invariant submanifold $\mathcal{M}$ of the tangent bundle of the fundamental table $T\mathcal{Q}$ which admits the Liouville measure $\lambda$ on $\mathcal{M}$ as an invariant measure. Let us begin by demonstrating the proof of the first of these claims.
\begin{prop}
Suppose $N\geq 3$. There is only one linear $N$-body scattering map $\sigma^{\star}$ which conserves both momentum and energy, and it is given pointwise by
\begin{equation*}
\sigma^{\star}(V):=\left(\frac{2}{N}\mathbf{1}\otimes \mathbf{1}-I_{N}\right)V    
\end{equation*}
for $V\in\mathcal{V}^{-}$.
\end{prop}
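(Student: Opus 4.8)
The plan is to show that linearity together with conservation of momentum and energy already forces $\sigma$ to be an orthogonal involution fixing $\mathbf{1}$, and then to use the requirement that $\sigma$ carry $\mathcal{V}^{-}$ onto $\mathcal{V}^{+}$ to pin the transformation down completely. Write $\sigma(V)=AV$ with $A\in\mathbb{R}^{N\times N}$. Since $\mathcal{V}^{-}$ has non-empty interior, the identities $\mathsf{P}(AV)=\mathsf{P}(V)$ and $\mathsf{E}(AV)=\mathsf{E}(V)$ are polynomial identities in $V$ valid on an open set, hence valid on all of $\mathbb{R}^{N}$. The momentum identity $\mathbf{1}^{T}AV=\mathbf{1}^{T}V$ gives $A^{T}\mathbf{1}=\mathbf{1}$, while the energy identity $|AV|^{2}=|V|^{2}$ gives $A^{T}A=I_{N}$, so $A\in\mathrm{O}(N)$. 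Combining the two, $A\mathbf{1}=A(A^{T}\mathbf{1})=(AA^{T})\mathbf{1}=\mathbf{1}$, so $A$ fixes the line $\mathbb{R}\mathbf{1}$ and therefore preserves its orthogonal complement $\mathbf{1}^{\perp}$, the zero-momentum subspace.

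Next I would exploit the boundary condition. Because $\mathcal{V}^{+}=-\mathcal{V}^{-}$, the requirement $A(\mathcal{V}^{-})=\mathcal{V}^{+}$ is equivalent to saying that $B:=-A$ is an orthogonal linear automorphism of the cone $\mathcal{V}^{-}$ with $B\mathbf{1}=-\mathbf{1}$. The cone $\mathcal{V}^{-}$ is cut out by the inequalities $V\cdot n_{i}\ge 0$, $i\in\{1,\dots,N-1\}$, where $n_{i}:=e^{N}_{i}-e^{N}_{i+1}$, and these $N-1$ vectors form a basis of $\mathbf{1}^{\perp}$. An orthogonal automorphism of the cone must permute its bounding hyperplanes, hence send each inward normal $n_{i}$ to a positive multiple of some $n_{\pi(i)}$; since all the $n_{i}$ have the same length $\sqrt{2}$ and $B$ is orthogonal, in fact $Bn_{i}=n_{\pi(i)}$ for a permutation $\pi$ of $\{1,\dots,N-1\}$. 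Orthogonality forces $\pi$ to preserve the Gram data $\langle n_{i},n_{j}\rangle$, which is exactly the $A_{N-1}$ Cartan matrix; its symmetries are the automorphisms of the corresponding path graph, namely $\{\mathrm{id},\ i\mapsto N-i\}$.

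The main obstacle is to rule out the nontrivial automorphism $\pi(i)=N-i$. This flip corresponds to the order-reversal $V\mapsto(v_{N},\dots,v_{1})$, which for $N\ge 3$ also lies in $\mathrm{O}(N)$, fixes $\mathbf{1}$, conserves $\mathsf{P}$ and $\mathsf{E}$, and carries $\mathcal{V}^{-}$ onto $\mathcal{V}^{+}$; so it cannot be excluded by the linear-algebraic data alone, and the genuine content of the proposition lies precisely in its exclusion. The way I would close this gap is to use consistency of $\sigma$ on the lower-dimensional strata of $\partial\mathcal{V}^{-}$: on the facet $\{v_{i}=v_{i+1}\}$ the spheres $i$ and $i+1$ carry no relative velocity and hence never touch, so a scattering map arising from a billiard flow must leave them comoving, i.e. it must map the facet $\{v_{i}=v_{i+1}\}$ of $\mathcal{V}^{-}$ into the facet $\{v_{i}=v_{i+1}\}$ of $\mathcal{V}^{+}$. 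This is equivalent to $\pi=\mathrm{id}$, whereas the flip violates it (it separates comoving particles). I expect verifying this facet-compatibility, and checking that it is genuinely forced by the flow construction of Section~\ref{reppy} rather than merely physically natural, to be the delicate step.

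With $\pi=\mathrm{id}$ the proof finishes quickly: $Bn_{i}=n_{i}$ for every $i$, and since $\{n_{i}\}$ spans $\mathbf{1}^{\perp}$ this gives $B=I$ on $\mathbf{1}^{\perp}$, while $B\mathbf{1}=-\mathbf{1}$. Hence $B$ is the reflection in $\mathbf{1}^{\perp}$, namely $B=I_{N}-\tfrac{2}{N}\mathbf{1}\otimes\mathbf{1}$, and therefore $A=-B=\tfrac{2}{N}\mathbf{1}\otimes\mathbf{1}-I_{N}$. A direct check confirms that this $A$ is orthogonal, fixes $\mathbf{1}$, satisfies $\mathsf{P}(AV)=\mathsf{P}(V)$ and $\mathsf{E}(AV)=\mathsf{E}(V)$, and maps $\mathcal{V}^{-}$ bijectively onto $\mathcal{V}^{+}$, establishing both existence and uniqueness of $\sigma^{\star}$.
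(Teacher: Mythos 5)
Your linear-algebraic setup coincides with the paper's: energy conservation on the full-dimensional cone $\mathcal{V}^{-}$ forces $A\in\mathrm{O}(N)$, momentum conservation forces $A^{T}\mathbf{1}=\mathbf{1}$ (the paper additionally invokes the involution $A^{2}=I_{N}$ from Assumption \ref{Ass}, whereas you obtain $A\mathbf{1}=\mathbf{1}$ from orthogonality alone), and all remaining information must come from the range condition $A(\mathcal{V}^{-})=\mathcal{V}^{+}$. The divergence occurs at that last step, and it is the step that matters. The paper asserts, ``by a contradiction argument'', that each facet normal $E_{i}^{N}=e_{i}^{N}-e_{i+1}^{N}$ is an eigenvector of $A^{T}$ with negative eigenvalue, but no argument is supplied; your analysis shows why none can be extracted from the stated hypotheses. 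The cone condition only forces $-A^{T}$ to permute the (equal-length) normals, the permutation must preserve the $A_{N-1}$ Gram data, and the path-graph flip $i\mapsto N-i$ survives. The resulting competitor, the order reversal $R:V\mapsto(v_{N},\dots,v_{1})$, is orthogonal, fixes $\mathbf{1}$, conserves $\mathsf{P}$ and $\mathsf{E}$, maps $\mathcal{V}^{-}$ bijectively onto $\mathcal{V}^{+}$, and satisfies $R^{2}=I_{N}$, hence also the first condition of Assumption \ref{Ass}. You have therefore located a genuine defect in the paper's own proof (and arguably in the statement of the Proposition): for $N\geq 3$, uniqueness does not follow from linearity, the conservation laws and the range condition alone.

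That said, your proposal does not close the gap either, as you yourself acknowledge. Two concerns about the proposed repair. First, the facet-compatibility requirement (that $\sigma$ preserve each facet $\{v_{i}=v_{i+1}\}$) is not among the paper's hypotheses, and it is doubtful it can be forced by the flow construction of Section \ref{reppy}: a point $(X,V)\in\mathcal{M}$ with $v_{i}=v_{i+1}$ and a common collision point necessarily has $x_{i}=x_{i+1}$, so it lies in $\partial\mathcal{M}$, which the paper's measure-theoretic framework deliberately neglects. Second, the physical reading is off in this setting: at a maximal-codimension collision all $N$ centres coincide, so spheres $i$ and $i+1$ do participate in the collision even when comoving, and it is not self-evident that they must exit comoving. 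Excluding $R$ therefore requires a genuinely new input --- e.g.\ continuity of $\sigma$ up to $\partial\mathcal{V}^{-}$ together with consistency with lower-order collisions, or an explicit additional axiom --- and neither your sketch nor the paper supplies it. Modulo that shared gap, your derivation of $A=\frac{2}{N}\mathbf{1}\otimes\mathbf{1}-I_{N}$ from $\pi=\mathrm{id}$ (identity on $\mathbf{1}^{\perp}$ for $-A$, reflection of $\mathbf{1}$) is correct and rather cleaner than the paper's Gram--Schmidt computation.
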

\begin{proof}
We begin by adopting the ansatz for $\sigma:\mathcal{V}^{-}\rightarrow\mathcal{V}^{+}$ given by
\begin{equation*}
\sigma(V)=AV    
\end{equation*}
for some $A\in\mathrm{GL}(N)$. In addition, as
\begin{equation*}
-\sigma(-\sigma(V))=V    
\end{equation*}
for all $V\in\mathcal{V}^{-}$, it follows that $A^{2}=I_{N}$. Moreover, as it is assumed that $\mathsf{E}(\sigma(V))=\mathsf{E}(V)$ for all $V\in\mathcal{V}^{-}$, it follows that $A\in\mathrm{O}(N)$. With the knowledge that $A\in\mathrm{O}(N)$, we seek a representation of the matrix $A$ of the form
\begin{equation*}
A=\sum_{i=1}^{N}\lambda_{i}(\widehat{a}_{i}\otimes \widehat{a}_{i})    
\end{equation*}
with $\lambda_{i}\in\{-1, 1\}$ for $i\in\{1, ..., N\}$ and vectors $\{\widehat{a}_{i}\}_{i=1}^{N}\subset\mathbb{R}^{N}$ which constitute an orthonomal basis of $\mathbb{R}^{N}$. The conservation of momentum yields an eigenpair of $A^{T}$. Indeed, 
\begin{equation*}
\mathsf{P}(\sigma(V))=\mathsf{P}(V)    
\end{equation*}
if and only if
\begin{equation}\label{linmomeigen}
(A^{T}\mathbf{1}-\mathbf{1})\cdot V=0    
\end{equation}
for all $V\in\mathcal{V}^{-}$. However, as $A^{2}=I_{N}$ it follows that identity \eqref{linmomeigen} holds for \emph{all} $V\in\mathbb{R}^{N}$. By separation, $A^{T}$ admits $\mathbf{1}$ as an eigenvector with corresponding eigenvalue $1$. 

The remaining $N-1$ eigenpairs of $A$ may be inferred from the fact that $\sigma$ maps the polytope $\mathcal{V}^{-}$ onto the polytope $\mathcal{V}^{+}$. Indeed, we recall that
\begin{equation*}
\sigma(V)\cdot E_{i}^{N}\geq 0 \quad \text{whenever}\quad V\cdot E_{i}^{N}\leq 0    
\end{equation*}
where $E_{i}^{N}=e_{i}^{N}-e_{i+1}^{N}$, which holds if and only if 
\begin{equation*}
A^{T}E_{i}^{N}\cdot V\geq 0 \quad \text{whenever}\quad E_{i}^{N}\cdot V\leq 0    
\end{equation*}
for $i\in\{1, ..., N-1\}$. By a contradiction argument, it follows that
\begin{equation*}
A^{T}E_{i}^{N}=\alpha_{i}E_{i}^{N} 
\end{equation*}
for some $\alpha_{i}<0$. However, it must hold that $\alpha_{i}\in\{-1, 1\}$, whence $\alpha_{i}=-1$ for $i\in\{1, ..., N-1\}$. It may be checked that 
\begin{equation}\label{liset}
\{\mathbf{1}, E_{1}^{N}, ..., E_{N-1}^{N}\}    
\end{equation}
constitutes a linearly independent set of vectors in $\mathbb{R}^{N}$. Noting that $A^{2}=I_{N}$ and $A^{T}A=I_{N}$, it follows that $A=A^{T}$. As such, the conservation of linear momentum, kinetic energy, and the range mapping condition $A\mathcal{V}^{-}=\mathcal{V}^{+}$ completely determine the matrix $A$. By applying the Gram-Schmidt algorithm to the linearly independent set of vectors \eqref{liset}, one finds that
\begin{equation*}
A^{\star}:=\frac{2}{N}\mathbf{1}\otimes \mathbf{1}-\sum_{k=1}^{N}e_{k}^{N}\otimes e_{k}^{N},
\end{equation*}
from which follows the claim of the Proposition.
\end{proof}
\begin{rem}
More concretely, the $N$-body scattering matrices $A^{\star}$ are given explictly by
\begin{equation*}
A^{\ast}=\left(
\begin{array}{ccc}
-\frac{1}{3} & \frac{2}{3} & \frac{2}{3} \vspace{2mm}\\
\frac{2}{3} & -\frac{1}{3} & \frac{2}{3} \vspace{2mm}\\
\frac{2}{3} & \frac{2}{3} & -\frac{1}{3}
\end{array}
\right)
\end{equation*}
in the case $N=3$, while by
\begin{equation*}
A^{\star}=\left(
\begin{array}{cccc}
-\frac{1}{2} & \frac{1}{2} & \frac{1}{2} & \frac{1}{2} \vspace{2mm} \\
\frac{1}{2} & -\frac{1}{2} & \frac{1}{2} & \frac{1}{2} \vspace{2mm} \\
\frac{1}{2} & \frac{1}{2} & -\frac{1}{2} & \frac{1}{2} \vspace{2mm} \\
\frac{1}{2} & \frac{1}{2} & \frac{1}{2} & -\frac{1}{2} 
\end{array}
\right)
\end{equation*}
in the case $N=4$, and by
\begin{equation*}
A^{\star}=\left(
\begin{array}{ccccc}
-\frac{3}{5} & \frac{2}{5} & \frac{2}{5} & \frac{2}{5} & \frac{2}{5} \vspace{2mm} \\
\frac{2}{5} & -\frac{3}{5} & \frac{2}{5} & \frac{2}{5} & \frac{2}{5} \vspace{2mm} \\
\frac{2}{5} & \frac{2}{5} & -\frac{3}{5} & \frac{2}{5} & \frac{2}{5} \vspace{2mm} \\
\frac{2}{5} & \frac{2}{5} & \frac{2}{5} & -\frac{3}{5} & \frac{2}{5} \vspace{2mm} \\
\frac{2}{5} & \frac{2}{5} & \frac{2}{5} & \frac{2}{5} & -\frac{3}{5}
\end{array}
\right)
\end{equation*}
in the case $N=5$, with a similar structure of the scattering matrices $A^{\star}$ in the general case $N>5$.
\end{rem}
\subsection{A Linear Classical Solution of the Second Boundary-value Problem}
We now show that the unique linear $N$-body scattering map that conserves both momentum and energy is a classical solution of the second boundary-value problem for the Jacobian equation. As a result of this, it follows immediately that $\sigma^{\star}$ generates a billiard flow $\{T^{t}_{\star}\}_{t\in\mathbb{R}}$ with the property that $T^{t}_{\star}\#\lambda=\lambda$ for all $t\in\mathbb{R}$. In what follows, the sign adopted in the Jacobian PDE depends on the parity of the dimension $N$ with which we work.
\begin{prop}
Suppose $N\geq 3$. The map $\sigma^{\star}$ is a classical solution of the second boundary-value problem
\begin{equation*}
\left\{
\begin{array}{l}
\displaystyle \mathrm{det}(D\sigma(V))=(-1)^{N-1}\frac{H(V)}{H(\sigma(V))}\quad \text{for}\hspace{2mm}V\in(\mathcal{V}^{-})^{\circ}, \vspace{2mm} \\
\sigma(\mathcal{V}^{-})=\mathcal{V}^{+}.
\end{array}
\right.
\end{equation*}
\end{prop}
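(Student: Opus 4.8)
The plan is to exploit the fact that $\sigma^{\star}$ is \emph{linear}, so that its derivative is the constant matrix $A^{\star}=\frac{2}{N}\mathbf{1}\otimes\mathbf{1}-I_{N}$ and the Jacobian equation collapses to a purely algebraic identity. First I would compute the left-hand side: since $D\sigma^{\star}(V)=A^{\star}$ for every $V$, one has $\mathrm{det}(D\sigma^{\star}(V))=\mathrm{det}(A^{\star})$. From the preceding Proposition we already know that $A^{\star}$ is symmetric and orthogonal with $\mathbf{1}$ an eigenvector of eigenvalue $+1$ and the $(N-1)$-dimensional complement $\mathrm{span}\{E_{1}^{N},\dots,E_{N-1}^{N}\}$ (where $E_{i}^{N}:=e_{i}^{N}-e_{i+1}^{N}$) an eigenspace of eigenvalue $-1$; hence $\mathrm{det}(A^{\star})=(-1)^{N-1}$.

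Next I would show that the right-hand side factor $H(V)/H(\sigma^{\star}(V))$ equals $1$. Writing $W:=\sigma^{\star}(V)=\frac{2}{N}\mathsf{P}(V)\mathbf{1}-V$, the key observation is that each component is reflected about a common value, so that $w_{i}-w_{j}=-(v_{i}-v_{j})$ and therefore $(w_{i}-w_{j})^{2}=(v_{i}-v_{j})^{2}$ for all $i<j$. Consequently both the sum $\sum_{i<j}(w_{i}-w_{j})^{2}$ and the product $\prod_{i<j}(w_{i}-w_{j})^{2}$ appearing in the definition \eqref{RHS} of $H$ are left invariant, whence $H(\sigma^{\star}(V))=H(V)$ and the ratio is $1$. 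Combining this with the determinant computation gives $\mathrm{det}(D\sigma^{\star}(V))=(-1)^{N-1}=(-1)^{N-1}H(V)/H(\sigma^{\star}(V))$, which is precisely the Jacobian PDE; I would note that for $V\in(\mathcal{V}^{-})^{\circ}$ the inequalities $v_{1}>\cdots>v_{N}$ are strict, so all factors $v_{i}-v_{j}$ are nonzero and $H$ is finite and well defined.

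Finally, for the boundary condition $\sigma^{\star}(\mathcal{V}^{-})=\mathcal{V}^{+}$ I would use the eigenrelation $A^{\star}E_{i}^{N}=-E_{i}^{N}$ together with the symmetry $A^{\star}=(A^{\star})^{T}$ to compute $\sigma^{\star}(V)\cdot E_{i}^{N}=V\cdot A^{\star}E_{i}^{N}=-V\cdot E_{i}^{N}$ for each $i\in\{1,\dots,N-1\}$. Thus $V\cdot E_{i}^{N}\geq 0$ forces $\sigma^{\star}(V)\cdot E_{i}^{N}\leq 0$, giving $\sigma^{\star}(\mathcal{V}^{-})\subseteq\mathcal{V}^{+}$; the symmetric argument yields $\sigma^{\star}(\mathcal{V}^{+})\subseteq\mathcal{V}^{-}$, and since $(A^{\star})^{2}=I_{N}$ makes $\sigma^{\star}$ an involution, applying $\sigma^{\star}$ to the latter inclusion produces the reverse containment $\mathcal{V}^{+}\subseteq\sigma^{\star}(\mathcal{V}^{-})$, and hence equality.

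I do not anticipate a genuine obstacle: the whole argument rests on the single structural fact that the momentum- and energy-conserving linear scattering acts as an orthogonal reflection negating every pairwise velocity difference, and this simultaneously trivialises the ratio $H(V)/H(\sigma^{\star}(V))$, fixes the determinant sign as $(-1)^{N-1}$, and interchanges the polytopes $\mathcal{V}^{-}$ and $\mathcal{V}^{+}$. The only point demanding minor care is the bookkeeping of the sign $(-1)^{N-1}$ --- that is, verifying that $\sigma^{\star}$ solves the $+$ problem \eqref{2ndbvpplus} when $N$ is odd and the $-$ problem \eqref{2ndbvpminus} when $N$ is even --- which follows directly from the parity of the determinant.
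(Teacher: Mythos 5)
Your proposal is correct and follows essentially the same route as the paper's own proof: compute $\det(A^{\star})=(-1)^{N-1}$ from the eigenpairs $A^{\star}\mathbf{1}=\mathbf{1}$ and $A^{\star}E_{i}^{N}=-E_{i}^{N}$, and observe that $\sigma^{\star}_{i}(V)-\sigma^{\star}_{j}(V)=-(v_{i}-v_{j})$ forces $H(\sigma^{\star}(V))=H(V)$. The only difference is that you spell out the range condition $\sigma^{\star}(\mathcal{V}^{-})=\mathcal{V}^{+}$ via the involution property, whereas the paper simply cites it as already established.
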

\begin{proof}
It has already been shown above that the $N$-body scattering map $\sigma^{\star}$ satisfies the range condition $\sigma^{\star}(\mathcal{V}^{-})=\mathcal{V}^{+}$. All that remains to be shown is that the smooth map $\sigma^{\star}$ satisfies the PDE
\begin{equation*}
\mathrm{det}(D\sigma^{\star}(V))=(-1)^{N-1}\frac{H(V)}{H(\sigma^{\star}(V))}    
\end{equation*}
for all $V\in(\mathcal{V}^{-})^{\circ}$. We begin by noting that
\begin{equation*}
\left(\frac{2}{N}\mathbf{1}\otimes \mathbf{1}-\sum_{k=1}^{N}e_{k}^{N}\otimes e_{k}^{N}\right)V\cdot(e_{i}^{N}-e_{j}^{N})=-v_{i}+v_{j},
\end{equation*}
whence $\sigma_{i}^{\star}(V)-\sigma_{j}^{\star}(V)=-v_{i}+v_{j}$ and in turn that
\begin{equation*}
H(\sigma^{\star}(V))=H(V)    
\end{equation*}
for all $V\in\mathcal{V}^{-}$. We also recall that the matrix $A^{\star}$ which characterises the linear map $\sigma^{\star}$ satisfies
\begin{equation*}
A^{\star}\mathbf{1}=\mathbf{1}   
\end{equation*}
as well as
\begin{equation*}
A^{\star}E_{i}^{N}=-E_{i}^{N}    
\end{equation*}
for $i\in\{1, ..., N-1\}$. As the determinant of $A^{\star}$ equals the product of its eigenvalues, it holds that $\mathrm{det}(A^{\star})=(-1)^{N-1}$ for all $N\geq 3$ and in turn that
\begin{equation*}
\mathrm{det}(D\sigma^{\star}(V))=(-1)^{N-1}    
\end{equation*}
for all $V\in\mathcal{V}^{-}$. The statement of the Proposition follows therefrom.
\end{proof}
\subsection*{Acknowledgments}
The author would like to thank Laure Saint-Raymond for an interesting discussion on invariant measures for hard sphere dynamics.
\bibliographystyle{siam}
\bibliography{bib}

\end{document}